\documentclass[12pt]{amsart}
\usepackage{amsthm,amsmath,amsxtra,amscd,amssymb,xypic}
\usepackage[all]{xy}

\newcommand{\ssigma}{{\boldsymbol{\Sigma}}}

\newcommand{\stab}{{\operatorname{stable}}}
\newcommand{\Span}{\operatorname{Span}}
\newcommand{\stratum}[2]{\beta_{#1}({#2})}
\newcommand{\betazero}[1]{\beta^0_{#1}}
\newcommand{\Psing}{\Perf[g,\operatorname{sing}]}
\newcommand{\Psmooth}{\Perf[g,\operatorname{smooth}]}
\newcommand{\Psimp}{\Perf[g,\operatorname{simp}]}
\newcommand{\topd}{\operatorname{top}}

\newcommand{\inv}{\operatorname{inv}}

\newcommand{\CC}{{\mathbb{C}}}

\newcommand{\HH}{{\mathbb{H}}}

\newcommand{\EE}{{\mathbb{E}}}
\newcommand{\PP}{{\mathbb{P}}}
\newcommand{\QQ}{{\mathbb{Q}}}
\newcommand{\RR}{{\mathbb{R}}}
\newcommand{\ZZ}{{\mathbb{Z}}}

\newcommand{\NN}{{\mathbb{N}}}

\newcommand{\VV}{{\mathbb{V}}}

\newcommand{\calT}{{\mathcal T}}
\newcommand{\Torus}{{\mathbb T}}

\newcommand{\calA}{{\mathcal A}}

\newcommand{\calM}{{\mathcal M}}

\newcommand{\calP}{{\mathcal P}}

\newcommand{\calX}{{\mathcal X}}

\newcommand{\calS}{{\mathcal S}}

\newcommand{\op}{\operatorname}
\newcommand{\ab}[1][g]{\calA_{#1}}
\newcommand{\ua}[1][g]{\calX_{#1}}
\newcommand{\Sat}[1][g]{{\calA_{#1}^{\op {Sat}}}}
\newcommand{\Vor}[1][g]{{\calA_{#1}^{\op {Vor}}}}
\newcommand{\Perf}[1][g]{{\calA_{#1}^{\op {Perf}}}}
\newcommand{\Matr}[1][g]{{\calA_{#1}^{\op {Matr}}}}
\newcommand{\Std}[1][g]{{\calA_{#1}^{\op {Std}}}}

\newcommand{\Asigma}[1][g]{{\overline{\calA}_{#1}^\ssigma}}
\newcommand{\Sp}{\op{Sp}}

\newcommand{\GSp}{\op{GSp}}
\newcommand{\GL}{\op{GL}}
\newcommand{\Mat}{\op{Mat}}
\newcommand{\Sym}{\op{Sym}}

\newcommand{\tor}{\op{tor}}

\newcommand\codim{{\rm codim}}
\newcommand\rank{\operatorname{rank}}
\newcommand\ud{\underline}

\newcommand{\pu}{\bullet}
\newcommand{\cohloc}[3]{H^{#1}(#2,#3)}
\newcommand{\coh}[2][\pu]{\cohloc {#1}{#2}{\QQ}}

\newcommand{\cohcloc}[3]{H_c^{#1}(#2,#3)}
\newcommand{\cohc}[2][\pu]{\cohcloc {#1}{#2}{\QQ}}


\theoremstyle{plain}
\newtheorem{thm}{Theorem}[section]
\newtheorem{lm}[thm]{Lemma}
\newtheorem{prop}[thm]{Proposition}
\newtheorem{cor}[thm]{Corollary}

\newtheorem{qu}[thm]{Question}
\newtheorem{conj}[thm]{Conjecture}
\newtheorem{summary}[thm]{Summary}

\theoremstyle{definition}

\newtheorem{rem}[thm]{Remark}

\begin{document}
\title[Stable cohomology of $\overline{\ab}$]{Stable cohomology of the perfect cone toroidal compactification of $\ab$}
\author{Samuel Grushevsky}
\address{Mathematics Department, Stony Brook University,
Stony Brook, NY 11790-3651, USA}
\email{sam@math.sunysb.edu}
\thanks{Research of the first author is supported in part by National Science Foundation under the grant DMS-12-01369.}
\author{Klaus Hulek}
\address{Institut f\"ur Algebraische Geometrie, Leibniz Universit\"at Hannover, Welfengarten 1, 30060 Hannover, Germany.}
\email{hulek@math.uni-hannover.de}
\thanks{Research of the second and third authors is supported in part by DFG grants Hu-337/6-1 and Hu-337/6-2. The final revision of the paper was completed at the
Institute for Advanced Study at Princeton where the second author was supported by the Fund for Mathematics.}
\author{Orsola Tommasi}
\address{Fachbereich Mathematik, Technische Universit\"at Darmstadt, Schlo\ss{}gartenstr. 7, 64289 Darmstadt, Germany}
\email{tommasi@mathematik.tu-darmstadt.de}

\begin{abstract}
We show that the cohomology of the perfect cone (also called first Voronoi) toroidal compactification $\Perf$ of the moduli space of complex principally polarized abelian varieties stabilizes in close to the top degree. Moreover, we show that this stable cohomology is purely algebraic, and we compute
it in degree up to 13. Our explicit computations and stabilization results apply in greater generality to various toroidal compactifications and partial compactifications, and in particular we show that the cohomology of the matroidal partial compactification $\Matr$ stabilizes in fixed degree, and forms a polynomial algebra. For degree up to 8, we describe explicitly the generators of the cohomology, and discuss various approaches to computing all of the stable cohomology in general.
\end{abstract}
\maketitle

\section{Introduction}

The stabilization of cohomology is of great interest in the study of the geometry of moduli spaces.
The most notable results in this direction are the stabilization of the cohomology of the moduli space $\ab$ of $g$-dimensional complex principally polarized abelian varieties (ppav), proved by Borel \cite{borel1}, and of the moduli space $\calM_g$ of non-singular algebraic curves of genus $g$, first proved by Harer in \cite{harerstab}. In both cases, the cohomology with $\QQ$ coefficients is shown to stabilize, in the sense that the degree $k$ cohomology group of the moduli space is independent of $g$ when $g$ is sufficiently large with respect to $k$. In both cases, stable cohomology is freely generated by classes whose geometric meaning is well understood: {}it follows from the work of Borel that the odd $\lambda$-classes generate the stable cohomology $\ab$, while the fact that the $\kappa$-classes generate the stable cohomology of $\calM_g$ is the celebrated theorem of Madsen and Weiss \cite{mawe}, proven using homotopy-theoretic methods.{}

It is natural to wonder whether similar stability occurs also for compactifications of moduli spaces. This is clearly not the case for the Deligne--Mumford compactification $\overline{\calM}_g$ of $\calM_g$, because it is known that the rank of the Picard group of $\overline{\calM}_g$, and hence its second cohomology, grows linearly in $g$.
On the other hand, it was shown by Charney and Lee \cite{chle} that the {}cohomology of the Satake (minimal) compactification $\Sat$ of $\ab$ stabilizes in the same range as $H^k(\ab)$.
For questions in algebraic geometry,
the toroidal compactifications of $\ab$ introduced in \cite{amrtbook}
are most relevant.
The stabilization of cohomology for any toroidal compactification in any range is a completely open problem (see \cite[\S6]{grAgsurvey}), and as we shall see the answer also depends on the compactification chosen{}. Moreover, the question is also interesting for
partial compactifications of $\calA_g$ such as the matroidal partial toroidal compactification.

\smallskip
The main purpose of this paper is to show the stabilization of cohomology in {\em close to the top degree} for the perfect cone toroidal compactification
$\Perf$ of $\ab$. Throughout the
paper we work
with $\QQ$ coefficients, and our main result is the following
\begin{thm}[Main theorem]\label{thm:main}
The cohomology and the homology of the perfect cone compactification stabilize in close to the top degree, i.e.~the groups $H^{g(g+1)-k}(\Perf,\QQ)$ and $H_{g(g+1)-k}(\Perf,\QQ)$ are independent of $g$ for $k<g$.
\end{thm}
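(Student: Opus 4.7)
Let $n := \binom{g+1}{2}$ be the complex dimension of $\Perf$; the theorem asks for $g$-independence of $H^{2n-k}(\Perf,\QQ)$ and $H_{2n-k}(\Perf,\QQ)$ for $k<g$. By the universal coefficient theorem it suffices to treat cohomology, since $\Perf$ is compact of finite type.

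My plan is to exploit the natural stratification of $\Perf$ by $\GL(g,\ZZ)$-orbits $[\sigma]$ of cones $\sigma$ in the perfect cone fan: the stratum $\betazero{[\sigma]}$ is locally closed of complex codimension $d=\dim\sigma$, and is a finite quotient of a bundle over (a finite cover of) $\calA_{g-t}$ with fiber an open torus orbit in the toric variety attached to $\sigma$, where $t=\op{rank}\sigma\le d$. Filtering $\Perf$ by the closed subsets $Z_p := \bigcup_{\dim\sigma\ge p}\overline{\betazero{[\sigma]}}$ yields a convergent spectral sequence
\begin{equation*}
E_1^{p,q}=\bigoplus_{[\sigma]:\,\dim\sigma=p} H_c^{p+q}(\betazero{[\sigma]},\QQ)\;\Longrightarrow\; H^{p+q}(\Perf,\QQ),
\end{equation*}
where $H^*$ and $H^*_c$ agree on the compact total space. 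The first key point is a dimension count: since $\betazero{[\sigma]}$ has complex dimension $n-p$, one has $H^{p+q}_c(\betazero{[\sigma]},\QQ)=0$ unless $p+q\le 2(n-p)$, i.e.\ $p\le k/2$ for total degree $2n-k$. Hence only strata with $\dim\sigma\le\lfloor k/2\rfloor$ contribute to $H^{2n-k}(\Perf,\QQ)$.

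For such low-dimensional cones the $\GL(g,\ZZ)$-equivalence class of $\sigma$ is determined by intrinsic data once $g$ exceeds $\dim\sigma$, so under the hypothesis $k<g$ the indexing set of the direct sum is independent of $g$. To see stabilization of each $E_1^{p,q}$, apply Poincaré duality on the smooth stratum to obtain $H_c^{p+q}(\betazero{[\sigma]},\QQ)\cong H^{k-2p}(\betazero{[\sigma]},\QQ)^\vee$, and use the Leray spectral sequence of $\betazero{[\sigma]}\to\calA_{g-t}$ to reduce to cohomology of $\calA_{g-t}$ with coefficients in the local systems $R^\bullet\pi_\ast\QQ$, in degree $\le k-2p$. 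Since $(k-2p)+t\le k-p\le k<g$, Borel's stability theorem controls these groups in the required range, and this should propagate to stabilization of each $E_1^{p,q}$ and, via compatibility of the differentials, of the abutment.

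The main obstacle will be handling the sheaves $R^\bullet\pi_\ast\QQ$ carefully: the fiber of $\pi$ has complex dimension $t(g-t)+\binom{t+1}{2}-d$, which grows linearly in $g$, so the fibers themselves do not stabilize. What must stabilize is the cohomology of $\calA_{g-t}$ with coefficients in specific sequences of symplectic representations, compatibly with the $g\mapsto g+1$ stabilization arising from the combinatorics of the perfect cone fan; this is in the spirit of, but more delicate than, Borel's original theorem for constant coefficients, and one must invoke (or establish) a stability statement for $H^\bullet(\calA_{g-t};\VV_\lambda)$ for appropriate $\lambda$. A secondary subtlety is that $\Perf$ is not a $\QQ$-orbifold, since the perfect cone fan is non-simplicial in general, but this does not affect the argument above, which applies Poincaré duality only on the smooth individual strata $\betazero{[\sigma]}$ rather than on the ambient space.
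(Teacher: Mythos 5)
Your proposal follows essentially the same route as the paper: stratify $\Perf$ by $\GL(g,\ZZ)$-orbits of cones, use the dimension count showing that only strata of codimension $\le\lfloor k/2\rfloor$ contribute in total degree $g(g+1)-k$, observe that (because every ray of a perfect cone is a rank-one form, so $\rank\sigma\le\dim\sigma$) the set of such orbits is independent of $g$ once $g>k$, prove stabilization stratum by stratum via Poincar\'e duality on each smooth quotient stratum and the Leray spectral sequence over $\ab[g-t]$, and assemble with the Gysin spectral sequence of the filtration. Two remarks on where the remaining work lies. First, the ``main obstacle'' you flag is real but slightly mislocated: the stability of $H^\pu(\ab[g-t],\VV_{\ud\mu})$ is not something to be established --- it is exactly the Borel--Hain input, which says these groups vanish stably for $\ud\mu\neq 0$; the actual work is (a) identifying the $\Sp(2(g-t))$-invariant part of the fiber cohomology, which the paper does via Thompson's invariant theory ($\bigl(\bigwedge^{2l}(H^1(A)\otimes\QQ^t)\bigr)^{\Sp}\cong\Sym^l(\Sym^2\QQ^t)$ in the stable range), and (b) running the spectral sequence of the torus bundle, whose $d_2$-differentials are the Euler classes determined by $\sigma$ and which the paper resolves by recognizing an exact Koszul complex. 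Second, your closing step ``via compatibility of the differentials'' hides a genuine difficulty: the intermediate unions of strata (e.g.\ $\betazero{i,g}$ and $\Perf\setminus\beta_{i+1,g}$) are in general singular, so Poincar\'e duality fails for them and there are no natural stabilization maps on their compactly supported cohomology with which the differentials could be compatible. The paper avoids this by proving that the stable cohomology of every stratum is algebraic, hence concentrated in even degree, so that every differential with source or target in the stable range vanishes and the spectral sequences degenerate there; this degeneration argument is what actually closes the proof (and yields Theorem~\ref{algebraic} as a byproduct). With those two ingredients supplied, your outline becomes the paper's proof.
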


Let us recall that the map $\ab\rightarrow \ab [g+1]$ defined by taking the product with a fixed elliptic curve extends to a map $\Perf \rightarrow \Perf[g+1]$ which is a transversal embedding with well-defined normal bundle, after passing to a suitable finite cover. This ensures the existence of Gysin maps $H_{(g+1)(g+2)-k}(\Perf[g+1], \QQ)\rightarrow H_{g(g+1)-k}(\Perf,\QQ)$. In the stable range, these maps induce the stabilization isomorphisms in our theorem.

The method of our proof is by noting that $\Perf$ admits a stratification with strata corresponding to various cones in the perfect cone or first Voronoi decomposition.
First, we prove in Theorem~\ref{thm:stablestrata} that the cohomology of each stratum stabilizes. Then we use the Gysin exact sequence to compute the cohomology of the union of all strata, using the specifics of the perfect fan to argue that the resulting cohomology stabilizes.
This construction can be extended in a straightforward way to homology using long exact sequences in Borel--Moore homology.
In particular, we obtain a stabilization isomorphism $H_{(g+1)(g+2)-k}(\Perf[g+1], \QQ)\rightarrow H_{g(g+1)-k}(\Perf,\QQ)$ in the stable range which restricts to the usual Gysin map on each toroidal stratum.

If one considers the cycle map to homology on the singular space $\Perf$, the constructions above allow us to see where the stable homology
classes come from, proving the next result:
\begin{thm}\label{algebraic}
The stable homology groups $H_{g(g+1)-k}(\Perf,\QQ)$
for $k<g$ are generated by algebraic classes.
\end{thm}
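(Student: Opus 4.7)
\smallskip
\textbf{Plan of proof.}
The strategy is to retrace the argument for Theorem~\ref{thm:main}, tracking algebraicity at every step. Since $\Perf$ is compact, singular homology coincides with Borel--Moore homology, and the algebraic classes form exactly the image of the cycle class map $\CH_*(\Perf)_\QQ \to H_*(\Perf,\QQ)$. The fundamental observation is that for any closed algebraic subvariety $\iota\colon Y\hookrightarrow\Perf$, the pushforward $\iota_*$ takes algebraic classes to algebraic classes.

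First, I would note that the perfect cone stratification of $\Perf$ is by locally closed algebraic subvarieties $\beta_\sigma$, and that each closure $\overline{\beta_\sigma}$ is a closed algebraic subvariety. Each stratum admits an algebraic map to a finite cover of $\calA_{g-r_\sigma}$, where $r_\sigma$ is the corank of $\sigma$, and by Theorem~\ref{thm:stablestrata} the stable cohomology of each stratum in the relevant range comes from pullbacks of the $\lambda$-classes of $\calA_{g-r_\sigma}$. The $\lambda$-classes are Chern classes of the Hodge bundle and hence algebraic; their pullbacks along algebraic morphisms remain algebraic. Passing to Borel--Moore homology yields algebraic generators of the corresponding close-to-top-degree groups of each $\overline{\beta_\sigma}$.

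Second, I would run the Borel--Moore long exact sequences used in the proof of Theorem~\ref{thm:main},
\[
\cdots \to H_{k}(\overline{\beta_\sigma},\QQ) \to H_k(\Perf,\QQ) \to H_k(\Perf\setminus\overline{\beta_\sigma},\QQ) \to H_{k-1}(\overline{\beta_\sigma},\QQ) \to \cdots,
\]
and proceed by induction on the codimension of the stratum to show that every stable homology class of $\Perf$ is a pushforward of an algebraic class from some $\overline{\beta_\sigma}$, hence algebraic.

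The main subtle point is ensuring that the connecting homomorphisms do not introduce transcendental classes through extensions. I expect this to follow from a parity argument: the stable cohomology of $\calA_{g'}$ is generated by the $\lambda$-classes, which sit in even real degrees, so the odd-degree stable Borel--Moore homology groups of each stratum vanish in the range $k<g$, and the extension data between the remaining even-degree groups is trivial. Verifying this vanishing uniformly across all strata of $\Perf$, and checking that the stable generators from Theorem~\ref{thm:stablestrata} really are in the image of the cycle class map and not merely abstractly matched in dimension, will be the main technical step.
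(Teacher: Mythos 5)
Your overall strategy --- stratify $\Perf$ by the toroidal strata, prove that the stable (co)homology of each stratum is algebraic, and then propagate algebraicity through the Borel--Moore long exact sequences using the vanishing of the odd-degree stable groups --- is exactly the paper's route (see Remarks~\ref{rem:homology} and \ref{rem:extensions}); the parity argument you single out as the subtle point is precisely how the paper makes the Gysin sequences degenerate and split in the stable range, so that every stable class of $\Perf$ is a pushforward from the closure of some stratum.

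There is, however, one concrete misstatement in your first step which, taken literally, leaves the argument short of generators. Theorem~\ref{thm:stablestrata} does \emph{not} say that the stable cohomology of a stratum consists of pullbacks of the $\lambda$-classes of $\ab[g-i]$: it says $H^\pu_\stab(\calT_g(\sigma))\cong\QQ[\lambda_1,\lambda_3,\dots]\otimes\Sym^\pu\bigl(\Span(\sigma)\cap\Sym^2(\QQ^i)\bigr)$, so in addition to the $\lambda$-classes there are degree-$2$ generators coming from $\Span(\sigma)$. Already for the first boundary stratum $\ua[g-1]/\pm 1$ the stable cohomology requires the universal theta divisor class $\Theta$, which is not pulled back from $\ab[g-1]$ (and the strata are torus bundles over $\ua[g-i]^{\times i}$ modulo a finite group, not finite covers of $\ab[g-i]$); if only $\lambda$-classes appeared, the Betti numbers of Theorem~\ref{thm:Perfnumbers} could not be matched and your induction through the long exact sequences would not close. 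The gap is repairable, and the repair is exactly what the paper supplies: by Proposition~\ref{p:identification} and Remark~\ref{rem:classes} the extra generators are identified with restrictions of the theta and Poincar\'e divisor classes $T_r$, $P_{rs}$ on $\ua[g-i]^{\times i}$, which are first Chern classes of explicit line bundles and hence lie in the image of the cycle map; their $G(\sigma)$-invariant polynomials and pushforwards along the closed strata therefore remain algebraic. With that correction your proof coincides with the paper's.
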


If Poincar\'e duality were to hold, one could relate the close to top degree cohomology groups $H^{g(g+1)-k}(\Perf,\QQ)$  to $H^{k}(\Perf,\QQ)$. Since, however,
the perfect cone toroidal compactification is singular, there is no a priori reason for Poincar\'e duality
to hold. Indeed, our computations in  genus $4$ \cite{huto2} show that Poincar\'e duality does fail, although these computations still allow for the possibility for Poincar\'e duality to
hold in the stable range.
A different approach would be to look at the intersection cohomology of $\Perf$.
It was recently shown by Dutour Sikiri\'c, Sch\"urmann, and the second author in \cite{DutourHulekSchuermann}, that for $g\geq4$ the locus of singular points of the stack $\Perf$ has codimension $10$ in $\Perf$ (while $\Perf$ is smooth as a stack for $g\le 3$).
This implies by \cite[Prop.~3]{durfeebetti} that $IH^k(\Perf,\QQ)=H^{g(g+1)-k}(\Perf,\QQ)$ for $k\leq 10$ for the middle perversity intersection cohomology of $\Perf$.
Moreover, by the results in \cite{bbfgk} algebraic cycles can always be lifted to intersection homology. Combining this with the two theorems above, we get that the stable homology $H_{g(g+1)-k}(\Perf,\QQ)$ can be lifted to $IH_{g(g+1)-k}(\Perf,\QQ)\cong IH^k(\Perf,\QQ)$.
This motivates the following

\begin{qu}\label{question_stableih}
Does the intersection cohomology of the perfect cone compactification stabilize, more specifically, is it true that the homomorphism $IH^{k}(\Perf, \QQ)\twoheadrightarrow H_{g(g+1)-k}(\Perf,\QQ)$ is an isomorphism for all $k<g$?
\end{qu}

As the stability map $\Perf\rightarrow \Perf[g+1]$ is (in an orbifold sense) a transversal embedding of pure dimension, there is a well-defined map for 
intersection cohomology  $IH^{k}(\Perf[g+1],\QQ)\rightarrow IH^{k}(\Perf[g],\QQ)$.
Combining this with the (hypothetical) isomorphism from Question~\ref{question_stableih} would prove that also the intersection cohomology of $\Perf$ stabilizes in the range $k<g$.

It is, at this stage, opportune to go briefly back and consider the situation for the Satake compactification $\Sat$.
Recall that the stable cohomology of the Satake compactification was
computed by Charney and Lee:
\begin{thm}[\cite{chle}]
For $k$ fixed and $g>k$ the rational cohomology $H^{k}(\Sat,\QQ)$ does not depend on $g$, and the stable
cohomology ring is freely generated by  classes $\lambda_1,\lambda_3,\lambda_5,\ldots$,
and  $\alpha_3,\alpha_5,\alpha_7,\ldots$ where both $\lambda_i$ and $\alpha_i$ are in degree $2i$.
\end{thm}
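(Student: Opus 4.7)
The plan is to proceed by induction on $g$, using the natural stratification of the Satake compactification together with Borel's theorem on $H^{\pu}(\ab,\QQ)$ and the long exact sequences in compactly supported cohomology.

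First I would set up the closed filtration $\Sat[0] \subset \Sat[1] \subset \cdots \subset \Sat[g] = \Sat$, where $\Sat[j]$ denotes the closure in $\Sat$ of the locally closed stratum $\ab[j]$, so that $\Sat[j] \setminus \Sat[j-1] = \ab[j]$. Because $\Sat$ is compact, for each $j$ the open embedding $\ab[j] \hookrightarrow \Sat[j]$ with closed complement $\Sat[j-1]$ yields a long exact sequence
$$\cdots \to H^k_c(\ab[j], \QQ) \to H^k(\Sat[j], \QQ) \to H^k(\Sat[j-1], \QQ) \to H^{k+1}_c(\ab[j], \QQ) \to \cdots,$$
which assembles into a spectral sequence with $E_1^{p,q} = H^{p+q}_c(\ab[g-p], \QQ)$ converging to $H^{p+q}(\Sat, \QQ)$.

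Next I would determine the $E_1$-page in the stable range. By Borel's theorem, the stable cohomology $H^{\pu}(\ab[j], \QQ)$ is the free graded-commutative algebra on the odd Hodge classes $\lambda_1, \lambda_3, \lambda_5, \ldots$; Poincar\'e--Lefschetz duality for the rational homology manifold $\ab[j]$ (of real dimension $j(j+1)$) then yields a stable description of $H^{\pu}_c(\ab[j], \QQ)$ up to degree shift. Moreover, the restriction homomorphisms between cohomologies of consecutive strata, coming from the inclusions $\Sp(2(j-1), \ZZ) \hookrightarrow \Sp(2j, \ZZ)$, are determined by Borel's original computation: they are surjective in the stable range with kernel generated by the top-degree new $\lambda$-class.

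Third I would analyze the $d_1$-differentials and extract the extra generators $\alpha_3, \alpha_5, \ldots$ Each differential records how a boundary stratum is glued in the Baily--Borel construction, governed by the rational parabolic subgroups of $\Sp(2g, \ZZ)$ acting on a Siegel domain. Stably, the local structure near $\ab[j] \subset \Sat$ behaves like a cone over a classifying space for a smaller symplectic group, and this cone contributes, for each odd $j \geq 3$, a single extra polynomial generator in degree $2j$ that does not lift to a class on $\ab$; these are the classes $\alpha_j$. Geometrically I expect $\alpha_j$ to arise as a Gysin push-forward from a boundary stratum, dual in a suitable sense to the corresponding $\lambda_j$ on $\ab$.

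The main obstacle will be controlling the singular local structure of $\Sat$ near its boundary strata precisely enough to compute these differentials, and then showing that the spectral sequence degenerates at $E_2$ in the stable range with no spurious multiplicative relations. The latter step would most plausibly be handled either by an explicit geometric model for the $\alpha_j$ (allowing a direct verification of transcendental independence by a low-genus calculation) or, following the strategy of Charney and Lee, by identifying $\Sat$ rationally with the classifying space of a suitable stable symplectic category whose cohomology can be computed by homotopy-theoretic methods.
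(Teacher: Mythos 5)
There is a genuine gap, and it sits at the heart of your step 2. Note first that the paper does not prove this statement: it is quoted from Charney--Lee \cite{chle}. Your stratification spectral sequence with $E_1^{p,q}=H_c^{p+q}(\ab[g-p],\QQ)$ is correctly set up, but its entries in the range relevant to the theorem are not computable from the inputs you invoke. To determine $H^k(\Sat,\QQ)$ for \emph{fixed small} $k$, you need the groups $H_c^{k'}(\ab[j],\QQ)$ for small $k'$ and various $j\le g$. Poincar\'e duality for the rational homology manifold $\ab[j]$ converts these into the duals of $H^{j(j+1)-k'}(\ab[j],\QQ)$, i.e.\ the cohomology of $\ab[j]$ \emph{close to the top degree}. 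Borel's theorem says nothing about this range: it computes $H^m(\ab[j])$ only for $m<j$, equivalently $H_c$ only near the top degree. The close-to-top-degree cohomology of $\ab[j]$ is governed by the unstable (in particular cuspidal) cohomology and is unknown in general; indeed the paper's own Theorem~\ref{thm:Satake} treats precisely the opposite regime $H^{g(g+1)-k}(\Sat)$, where the boundary terms vanish for dimension reasons and the argument closes --- that vanishing is exactly what fails in low degree. So your $E_1$-page is not determined, and the induction cannot start.

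Consequently your step 3 has no mechanism behind it: the assertion that each boundary stratum contributes one new polynomial generator $\alpha_j$ in degree $2j$ for odd $j\ge 3$ is exactly the content of the theorem and cannot be read off from the local cone structure without the missing $E_1$ input. In Charney and Lee's actual proof these classes do not arise from a Gysin pushforward of an algebraic cycle (the paper notes they are expected to be non-algebraic, having the wrong Tate twist); they come from a homotopy-theoretic splitting of the stable Satake space, rationally a product of $B\Sp(\infty,\ZZ)^+$ with a space whose rational cohomology is controlled by Borel's computation of $K_*(\ZZ)\otimes\QQ$, the generators in degrees $6,10,14,\ldots$ matching $K_{2j-1}(\ZZ)\otimes\QQ\neq 0$ for odd $j\ge 3$. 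Your closing sentence gestures at this route as a fallback, but that identification is the entire proof, not a finishing touch; the stratification-plus-Borel strategy by itself cannot reach the conclusion.
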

Here the $\lambda$-classes are extensions of the Chern classes $\lambda_i=c_i(\EE) \in H^{2i}(\ab, \QQ)$ of the Hodge bundle $\EE$.
The Hodge bundle does not extend to $\Sat$ but by  \cite{mumhirz}, \cite[\S V.2]{fachbook} it extends to
any toroidal compactification and the pullback of the classes $\lambda_i$  on $\Sat$ to a smooth projective toroidal compactification are  the Chern classes of the extended Hodge bundle.

The geometric meaning of the $\alpha_i\in H^{2i}(\Sat,\QQ)$ is less clear. 
By the results of \cite{hain} there is a non-algebraic class (it has a wrong Tate twist)
in $H^6(\Sat[3])$, which is likely to be $\alpha_3$,
and it follows from the results of \cite{huto2} that there is also a non-algebraic class in $H^8(\Sat[4])$, which is likely to be $\alpha_3\lambda_1$.
Furthermore, Chen and Looijenga \cite{chen-looijenga} recently proved that all $\alpha_i$ are of Hodge type $(0,0)$, which in particular implies that they are not algebraic.

On the other hand using our methods it is easy to see that the cohomology  of $\Sat$ in close to top degree also stabilizes, and we can compute it explicitly:
\begin{thm}\label{thm:Satake}
The cohomology $H^{g(g+1)-k}(\Sat,\QQ)$ is independent of $g$ for $k<g$,
and is dual to the truncated free algebra generated by the odd Hodge classes $\lambda_{2i+1}$.
\end{thm}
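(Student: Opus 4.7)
The plan is to follow the pattern of the proof of Theorem~\ref{thm:main}, but with the much simpler Baily--Borel stratification $\Sat = \ab \sqcup \Sat[g-1]$ in place of the toroidal one. The upshot is a quick reduction to Borel's stability theorem for $\ab$.

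First, I apply the long exact sequence in Borel--Moore homology with $\QQ$ coefficients associated with the closed inclusion $\Sat[g-1] \hookrightarrow \Sat$ and its open complement $\ab$:
\[
\cdots \to H^{BM}_i(\Sat[g-1]) \to H^{BM}_i(\Sat) \to H^{BM}_i(\ab) \to H^{BM}_{i-1}(\Sat[g-1]) \to \cdots.
\]
The closed stratum $\Sat[g-1]$ is a projective variety of complex dimension $g(g-1)/2$, so its Borel--Moore homology vanishes above degree $g(g-1)$. In the stable range $k<g$ one has $g(g+1)-k-1 > g(g-1)$, hence both flanking terms vanish and
\[
H^{BM}_{g(g+1)-k}(\Sat,\QQ) \cong H^{BM}_{g(g+1)-k}(\ab,\QQ).
\]

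Next, since $\ab$ is smooth of real dimension $g(g+1)$, Poincar\'e duality over $\QQ$ identifies this Borel--Moore group with $H^k(\ab,\QQ)$, and Borel's theorem gives $H^k(\ab,\QQ)\cong R_k$ for $k<g$, where $R_k$ denotes the degree-$k$ piece of the free graded commutative algebra $R:=\QQ[\lambda_1,\lambda_3,\lambda_5,\ldots]$. Finally, since $\Sat$ is proper one has $H^{BM}_*(\Sat,\QQ)=H_*(\Sat,\QQ)$, and the universal coefficient theorem over $\QQ$ (valid because the group is finite-dimensional by the above) yields $H^{g(g+1)-k}(\Sat,\QQ)\cong H_{g(g+1)-k}(\Sat,\QQ)^*$. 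Composing all these isomorphisms gives
\[
H^{g(g+1)-k}(\Sat,\QQ) \cong R_k^*
\]
in the stable range $k<g$, which is the claim.

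The one point that requires care is checking that the identifications above are compatible with the natural stabilization map $\Sat \to \Sat[g+1]$, so that the isomorphism is genuinely $g$-independent and matches the one on $\ab$ used by Borel. This is a functoriality check at each step and should present no essential obstacle, since the BM long exact sequence, Poincar\'e duality on the smooth open stratum, and the universal coefficient isomorphism are all natural with respect to the stabilization map induced by $\Sp(2g,\ZZ)\hookrightarrow \Sp(2g+2,\ZZ)$.
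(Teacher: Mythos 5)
Your proof is correct and is essentially the paper's own argument in dual form: the paper runs the Gysin exact sequence in compactly supported cohomology for the closed pair $\Sat[g-1]\subset\Sat$, kills the boundary terms because $\coh[\ell]{\Sat[g-1]}$ vanishes for $\ell>g(g-1)=2\dim_\CC\Sat[g-1]$, and then applies Poincar\'e duality on the smooth stack $\ab$ together with Borel's theorem, which is exactly your Borel--Moore sequence read through the duality $H^{BM}_i\cong (H^i_c)^*$. The only cosmetic difference is that you dualize at the end via universal coefficients rather than working with $H^\pu_c$ throughout.
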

We recall that the starting point for the study of the stable cohomology of $\calA_g$ is the theorem of Borel (see Theorem~\ref{thm:stable} below), which says that the cohomology $H^k(\calA_g)$ for $k<g$ is freely generated by the classes $\lambda_1,\lambda_3,\lambda_5,\ldots$. Thus the theorem above says that the stable cohomology in close to top degree of $\Sat$ is dual to this, which is expected to be the algebraic part of the stable cohomology of $\Sat$.

We shall now return to toroidal compactifications and partial compactifications, in particular the perfect cone compactification $\Perf$. As we explained, the principal ingredient of our method is that we  prove the stabilization for each of the toroidal strata, using representation theory, and then by assembling this information  using the Gysin spectral sequence. As at each step we are doing explicit manipulations, as a result we get an {\em effective} procedure to compute the dimensions of the stable cohomology groups (and also to say something about their generators). While this quickly becomes very involved combinatorially,
for low degree we get the following result:

\begin{thm}\label{thm:Perfnumbers}
The stable Betti numbers of the perfect cone compactification (i.e.~$\dim_\QQ H^{g(g+1)-k}(\Perf,\QQ)$ for $k < g$) in even degree are as follows:
$$
\begin{array}{|r|rrrrrrr|}
\hline
k&0&2&4&6&8&10&12\\
\hline
&&&&&&&
\\[-2.2ex]
\dim_\QQ H^{g(g+1)-k}(\Perf,\QQ)&1&2&4&9&18&38&83\\[0.3ex]
\hline
\end{array}
$$
Moreover, the stable cohomology $H^{g(g+1)-k}(\Perf,\QQ)$ vanishes for odd $k\le13$.
\end{thm}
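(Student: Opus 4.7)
The plan is to make the Gysin-type assembly argument underlying Theorem \ref{thm:main} fully explicit in the low-degree range $k\le 13$. Since $\coh[d]{\ab}=0$ for $d>g^2$ (the virtual cohomological dimension of $\Sp_{2g}(\ZZ)$) and $g(g+1)-k>g^2$ whenever $k<g$, the entire stable cohomology $\cohloc{g(g+1)-k}{\Perf}{\QQ}$ is supported on the toroidal boundary of $\Perf$. The boundary strata $\stratum{g}{\sigma}$ are indexed by $\GL_g(\ZZ)$-orbits of cones $\sigma$ in the perfect cone decomposition of the rational closure of the positive-definite symmetric cone, and a cone of complex codimension $d$ enters via a Gysin shift by $2d$. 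Hence only cones of dimension $d\le 6$ can contribute in the range $k\le 13$.

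The first step is to enumerate these orbits. In dimension $d\le 6$ every perfect cone is matroidal, so the list of orbits, together with their stabilizers in $\GL_g(\ZZ)$ and their face incidences, is explicit, finite, and independent of $g$ once $g$ is large enough. For each orbit, Theorem \ref{thm:stablestrata} gives the stable cohomology ring of $\stratum{g}{\sigma}$ as the tensor product of Borel's stable ring on $\ab[g-n(\sigma)]$ with the $\op{Stab}(\sigma)$-invariants in the exterior algebra on the character lattice of the torus fiber of the stratum. Each stabilizer is a small hyperoctahedral-type subgroup of $\GL_g(\ZZ)$, so computing these invariants is a finite representation-theoretic task to be carried out orbit by orbit.

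With the stable cohomology of each relevant stratum in hand, $\cohloc{g(g+1)-k}{\Perf}{\QQ}$ is assembled by successively applying the Gysin long exact sequences as closed boundary strata are added to $\ab$ one orbit at a time; each stratum of codimension $d$ contributes its stable cohomology in degree $g(g+1)-k-2d$. Direct inspection orbit by orbit shows that in the range $k\le 13$ every such contribution lives in even total degree, which yields the odd-degree vanishing claim. Computing the ranks of the Gysin boundary maps associated with face incidences of cones then produces, as a finite signed count over orbits, the tabulated even Betti numbers $1,2,4,9,18,38,83$. The principal obstacle is the bookkeeping: matching stabilizer actions across face incidences of cones and computing the ranks of several explicit linear maps between finite-dimensional vector spaces, while ensuring no odd-degree contribution is introduced by an unexpected invariant. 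The combinatorial input is however finite and well-documented (for instance in \cite{DutourHulekSchuermann}), so the calculation is tractable and yields the stated table.
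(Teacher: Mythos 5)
Your overall architecture---stratify $\Perf$ by orbits of cones, compute the stable cohomology of each stratum, and assemble via Gysin sequences---is indeed the paper's, but three of your specific claims are wrong, and each one by itself would change the table. First, the interior does contribute. The vanishing $H^d(\ab,\QQ)=0$ for $d>g^2$ concerns ordinary cohomology, whereas the assembly is carried out in compactly supported cohomology (equivalently Borel--Moore homology): the open stratum $\ab$ contributes $\cohc[g(g+1)-k]{\ab}\cong H^k(\ab,\QQ)^\vee$, which is Borel's stable ring and accounts for $1,1,1,2,2,3,4$ of the totals in degrees $k=0,2,\dots,12$; dropping it would give $0,1,3,7,16,35,79$ instead of $1,2,4,9,18,38,83$. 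Second, it is false that every perfect cone of dimension at most $6$ is matroidal: the five-dimensional cone $\sigma_{NS}=\langle x_1^2,\dots,x_4^2,(2x_5-x_1-x_2-x_3-x_4)^2\rangle$ is non-matroidal, as are three of the thirteen six-dimensional cones. These are precisely what make the answers for $\Perf$ in degrees $10$ and $12$ equal to $38$ and $83$ rather than the matroidal values $37$ and $78$, so your enumeration would reproduce (at best) Theorem \ref{thm:Matrnumbers}, not Theorem \ref{thm:Perfnumbers}.

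Third, the stable cohomology of a stratum is not the $G(\sigma)$-invariant part of the exterior algebra on the character lattice of the torus fiber tensored with Borel's ring. The stratum fibers as a torus bundle over $\ua[g-i]^{\times i}$, not over $\ab[g-i]$, and in the Leray spectral sequence of that bundle the exterior-algebra classes are killed by a Koszul-type $d_2$ differential against the Euler classes built from the $T_j$ and $P_{jk}$; what survives (Theorem \ref{thm:stablestrata} and Lemma \ref{l:cohpsia}) is $\QQ[\lambda_1,\lambda_3,\dots]\otimes\Sym^\pu\bigl(\Span(\sigma)\cap\Sym^2(\QQ^i)\bigr)$ with the generators of the cone placed in degree $2$, on which $G(\sigma)$ then acts through its permutation action on the rays. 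An exterior algebra would have the wrong graded dimensions and would sit partly in odd degree, so your odd-vanishing claim could not be checked ``by inspection'' from it. The correct (purely even, algebraic) answer is also what forces every Gysin connecting map to vanish in the stable range, so the final count is a plain sum of stratum contributions rather than the signed count over face incidences you describe; with your stratum model neither the degeneration nor the resulting numbers would come out.
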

\begin{rem}
We note that similar questions are also currently under investigation by Jeffrey Giansiracusa and Gregory Sankaran \cite{gisa}. Their techniques are mostly topological, and at the moment it appears that their method would yield the stabilization of the cohomology of the matroidal locus in low degree with $\ZZ[1/2]$ coefficients, i.e.~the independence of $H^k(\Matr,\ZZ[1/2])$ of $g$ for $g\gg k$. It does not at the moment appear that their method would yield a way to explicitly identify the generators or compute the dimensions of stable cohomology, and thus {}
their results are in a sense rather complementary to ours.
\end{rem}
In fact, our technique also applies to show that the cohomology of the matroidal locus stabilizes. We recall that the matroidal locus $\Matr$ is a partial toroidal compactification of $\ab$ obtained by taking the union of strata corresponding to all matroidal cones.
Melo and Viviani \cite{mevi} showed that a cone is contained in both the perfect cone decomposition and the second Voronoi decomposition if and only if it is a matroidal cone.
In particular the matroidal locus is the biggest partial toroidal compactification  contained in both $\Perf$ and $\Vor$ as a Zariski open subset.
Thus the results of Alexeev and Brunyate \cite{albr} imply that the Torelli map $\calM_g\to\calA_g$ extends to a morphism $\overline{\calM_g}\to\Matr$ from the Deligne--Mumford compactification.
Our  result for the matroidal locus is the following:
\begin{thm}\label{thm:Matrstabilizes}
The cohomology  of the matroidal partial toroidal compactification stabilizes, i.e.~$H^k(\Matr,\QQ)$ does not depend on $g$ for $k<g$. The stable cohomology is generated by algebraic classes.
\end{thm}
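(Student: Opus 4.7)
The strategy parallels the one used for $\Perf$, but operating in low (rather than close-to-top) degree on a partial (rather than full) compactification. First I would stratify $\Matr = \ab \sqcup \bigsqcup_{[\sigma]} \beta_{[\sigma]}$, where $[\sigma]$ runs over $\Sp(2g,\ZZ)$-orbits of nonzero matroidal cones. The crucial combinatorial input, following Melo--Viviani \cite{mevi}, is that matroidal cones of rank $r$ correspond to equivalence classes of simple regular matroids of rank $r$, and that for fixed $r$ this set stabilizes once $g\geq r$ (a rank-$r$ regular matroid has bounded combinatorial complexity and embeds uniformly into $\Sym^2\ZZ^g$). Consequently, in any fixed degree $k<g$ only strata of codimension $\leq k$ contribute, and these are indexed by a $g$-independent finite set.

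The second step is to apply Theorem~\ref{thm:stablestrata} to each matroidal stratum $\beta_{[\sigma]}$ of rank $r$, which gives stabilization of its rational cohomology. By the general construction of toroidal compactifications, such a stratum is (up to a finite group action) a torus-like fiber bundle over a copy of $\ab[g-r]$, and its stable cohomology is generated by algebraic classes: the odd $\lambda_{2i+1}$ pulled back from $\ab[g-r]$ via Borel's theorem, together with Chern classes of toroidal line bundles on the torus fibers.

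The third step is to assemble the global cohomology via a Gysin spectral sequence for the stratification (equivalently, by iterating long exact sequences of open--closed pairs), combined with Borel's stabilization on the open stratum $\ab$. Because each $E_1$-entry stabilizes (Step~2), because only finitely many entries contribute in fixed total degree $k<g$ (Step~1), and because the differentials are induced by geometrically defined maps independent of $g$ in the stable range, the entire spectral sequence stabilizes in degree $k<g$. This gives the independence of $H^k(\Matr,\QQ)$ of $g$. Algebraicity is preserved throughout: all Borel generators and all boundary generators are algebraic, and Gysin pushforwards preserve algebraicity, so every class on $E_\infty$ is represented by an algebraic class.

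The main obstacle is Step~1: the careful verification that matroidal cones of bounded rank modulo $\Sp(2g,\ZZ)$-equivalence form a set that stabilizes with $g$. This requires understanding the interplay between matroid equivalence and the symplectic group action on $\Sym^2\ZZ^g$, and is the place where the special structure of matroidal (as opposed to general perfect) cones is crucial: the matroidal combinatorics is sufficiently constrained --- unlike the full perfect-cone decomposition, where arbitrarily complicated cones already appear in $\Perf[4]$ --- to stabilize in bounded rank uniformly in $g$, giving the stronger stability (fixed degree, not just close to top) that characterizes the matroidal locus.
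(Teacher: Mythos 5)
There is a genuine gap, and it is precisely at the point your proposal treats as unproblematic. The fact that actually makes fixed-degree stabilization of $H^k(\Matr,\QQ)$ possible --- and the one ingredient your argument never mentions --- is that all matroidal cones are \emph{simplicial} (by \cite[Theorem 4.1]{errydicing}), so that $\Matr$ is a rational homology manifold and satisfies Poincar\'e duality with $\QQ$-coefficients. The Gysin spectral sequence of a stratification naturally computes cohomology \emph{with compact support}; running the machinery of Theorem~\ref{thm:stablestrata}, Proposition~\ref{prop:stablebeta} and Lemma~\ref{lemma:setminusbeta} for the matroidal fan (which is an admissible collection with $\codim\beta_i=i$, since the standard cones are matroidal) yields stabilization of $H_c^{g(g+1)-k}(\Matr,\QQ)$ in close to the top degree, exactly as for $\Perf$. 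To convert this into stabilization of $H^k$ in fixed low degree you must apply Poincar\'e duality, which is available only because $\Matr$ is rationally smooth; this is how the paper proceeds (it simply reruns the proof of Proposition~\ref{prop:cohossmooth}(i) for $\Matr$). Equivalently, if you insist on assembling ordinary cohomology directly in low degree as you propose, the Gysin maps $H^{k-2c}(\stratum{}{\sigma})\to H^k$ you need already presuppose rational smoothness along each stratum and of the ambient space. Without simpliciality your argument proves only the close-to-top-degree statement, which is what one gets for $\Perf$.

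Relatedly, your diagnosis of what is ``special'' about the matroidal case is not correct. The combinatorial finiteness you flag as the main obstacle --- that the strata of codimension $\le k$ form a finite set independent of $g$ for $g\ge k$ --- holds verbatim for the full perfect cone decomposition (Proposition~\ref{prop:basic}(ii)); its proof is a short conjugation argument using only that every rank-$i$ cone has dimension $\ge i$, and it requires no matroid theory. So this cannot be the reason $\Matr$ stabilizes in fixed degree while $\Perf$ stabilizes only near the top degree: the true reason is the failure of Poincar\'e duality on the singular space $\Perf$ versus its validity on the rationally smooth $\Matr$. Your Steps 2 and 3 (stabilization of strata via Theorem~\ref{thm:stablestrata}, degeneration/stabilization of the spectral sequence --- which in the paper follows from the vanishing of all stable odd cohomology rather than from any claim about $g$-independence of differentials) are essentially the paper's argument; the missing step is the duality, and the ``main obstacle'' you identify is a non-issue.
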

{}Since all matroidal cones are simplicial by \cite[Theorem 4.1]{errydicing} and thus define rationally smooth toric varieties, the coarse moduli space of $\Matr$ is a rational homology manifold.{}
Hence the stability result for cohomology in Theorem \ref{thm:Matrstabilizes} is equivalent to a stability result for cohomology with compact support in close to the top degree.

Similarly to the above, as a corollary of our work for the perfect cone compactification we obtain (essentially simply by omitting all the
non-matroidal strata) the {}dimensions of{} stable cohomology of the matroidal locus in degree up to 12:
\begin{thm}\label{thm:Matrnumbers}
The stable cohomology of the matroidal locus in low degree vanishes in odd degree and is given by the following table in even degree
$$\begin{array}{|r|rrrrrrr|}
\hline
k&0&2&4&6&8&10&12\\
\hline
&&&&&&&
\\[-2.2ex]
\dim H^k(\Matr,\QQ)&1&2&4&9&18&37&78\\[0.3ex]
\hline
\end{array}$$
\end{thm}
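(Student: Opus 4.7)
The plan is to replicate the computation of Theorem~\ref{thm:Perfnumbers}, restricting attention to matroidal strata. Since every matroidal cone is simplicial by \cite[Theorem 4.1]{errydicing}, the coarse moduli space of $\Matr$ is a rational homology manifold, and so Poincar\'e duality is available with $\QQ$ coefficients. Consequently, as noted in the discussion following Theorem~\ref{thm:Matrstabilizes}, computing $H^k(\Matr,\QQ)$ in low degree is equivalent to computing cohomology with compact support in close to the top degree $g(g+1)-k$, which is the natural setting for the Gysin methods of this paper. Stabilization in this range is already provided by Theorem~\ref{thm:Matrstabilizes}.

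First I would set up the Gysin spectral sequence, exactly as in the proof of Theorem~\ref{thm:Perfnumbers}, associated to the stratification of $\Matr$ by $\GL(g,\ZZ)$-orbits of matroidal cones. The $E_1$-page assembles the stable cohomology of the individual toroidal strata, each of which stabilizes by Theorem~\ref{thm:stablestrata} and is explicitly computable using the representation-theoretic techniques developed earlier in the paper. Since a stratum of codimension $d$ contributes to $H^k(\Matr,\QQ)$ only for $k\ge 2d$, for $k\le 12$ the computation involves only matroidal cones of dimension at most $6$, which can be enumerated explicitly up to $\GL(g,\ZZ)$-equivalence.

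Running the spectral sequence on this matroidal subcomplex then yields the entries of the table. Crucially, matroidal and perfect cones coincide in sufficiently low dimension, so the entries for $k\le 8$ must agree with those of Theorem~\ref{thm:Perfnumbers}. The differences $38-37=1$ in degree $10$ and $83-78=5$ in degree $12$ correspond precisely to the contributions of the lowest-dimensional non-matroidal perfect cones that are being omitted here, and provide a useful internal consistency check against the $\Perf$ computation. Vanishing in odd degree follows because each matroidal stratum has purely even stable cohomology and the Gysin differentials respect parity, by the same argument used for $\Perf$.

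The main obstacle is purely computational: one must correctly enumerate the low-dimensional matroidal cones, compute the stable cohomology of each corresponding stratum as a representation of the relevant general linear group, and then track the Gysin differentials through the necessary pages of the spectral sequence. No new conceptual input is needed beyond what has already gone into Theorem~\ref{thm:Perfnumbers}; in fact the matroidal calculation is slightly simpler because strictly fewer strata contribute, which is also what makes this result a genuine corollary rather than an independent theorem.
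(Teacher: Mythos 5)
Your proposal is correct and follows essentially the same route as the paper: the paper likewise obtains the table by Poincar\'e duality on the rationally smooth $\Matr$ and by removing from the $\Perf$ computation (Table~\ref{tab:addingup}) exactly the contributions of the non-matroidal cones, namely the fundamental class of $\sigma_{NS}$ in degree $10$, and in degree $12$ the three non-matroidal $6$-dimensional cones plus the two classes in $H^2$ of the stratum of $\sigma_{NS}$. Your framing as re-running the Gysin spectral sequence on the matroidal subcomplex is the same computation viewed from the other side.
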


Proving stabilization in low degree has the advantage that in this case, stable cohomology has a natural structure of a graded Hopf algebra.
As was pointed out to us by Nicholas Shepherd-Barron, when the cohomology of a (partial) compactification $\overline{\ab}$ of $\ab$ stabilizes, the stable cohomology can be identified with the cohomology of the inductive limit $\overline{\ab[\infty]}$ of the sequence of maps $\cdots\rightarrow\overline{\ab}\rightarrow\overline{\ab[g+1]}\rightarrow\cdots$. In particular, whenever the map $\ab[g_1]\times\ab[g_2]\rightarrow\ab[g_1+g_2]$ defined by taking the product of abelian varieties extends to a map of compactifications
$\overline{\ab[g_1]}\times\overline{\ab[g_2]}\rightarrow\overline{\ab[g_1+g_2]}$ for all $g_1,g_2\geq 0$, the inductive limit $\overline{\ab[\infty]}$ has a natural structure as an H-space. Then one can apply Hopf's theorem (see e.g. \cite[Thm. 3C.4]{hatcher}) to conclude that the rational cohomology of $\overline{\ab[\infty]}$, i.e.~the stable cohomology of $\overline{\ab}$, is a free graded-commutative algebra, the tensor product of an exterior algebra on odd-degree generators and a polynomial algebra on even-degree generators.{}

In particular, in the case of $\Matr$, the two theorems above,  together with  some results from Section~\ref{sec:alg}, imply the following:
\begin{cor}\label{cor:matr}
The stable cohomology of $\Matr$ is a polynomial algebra generated by algebraic classes. In low degree $k\le 12$, a possible choice of generators is given by $\lambda_1$, $\lambda_3$, $\lambda_5$, the fundamental classes of the strata of $\Matr$ corresponding to the matroidal cones of dimension smaller than or equal to $6$, one additional generator in degree $10$, and two in degree $12$.
\end{cor}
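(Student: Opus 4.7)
The plan is to combine the Hopf-algebra argument sketched in the paragraph preceding the corollary with the explicit stable Betti numbers from Theorem~\ref{thm:Matrnumbers} and the identification of algebraic generators developed in Section~\ref{sec:alg}.

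First, I would verify that the product map $\ab[g_1]\times\ab[g_2]\to\ab[g_1+g_2]$ extends to a morphism $\Matr[g_1]\times\Matr[g_2]\to\Matr[g_1+g_2]$. This reduces to the observation that the direct sum of two matroidal cones, embedded block-diagonally inside $\Sym^2(\RR^{g_1+g_2})$, is again matroidal, because totally unimodular configurations are closed under direct sum. Combined with Theorem~\ref{thm:Matrstabilizes}, the inductive limit $\Matr[\infty]=\varinjlim_g \Matr$ is then an H-space, so by Hopf's theorem its rational cohomology --- equivalently the stable cohomology of $\Matr$ --- is a free graded-commutative $\QQ$-algebra. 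The second half of Theorem~\ref{thm:Matrstabilizes} asserts moreover that this algebra is generated by algebraic classes; since algebraic classes lie in even cohomological degree, a minimal set of generators can be chosen in even degree, so the free graded-commutative structure collapses to a polynomial algebra on even-degree generators. As a bonus, this also forces the stable cohomology to vanish in every odd degree, not merely the ones accessible to direct computation.

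The next step is to count generators. Writing the stable Poincar\'e series from Theorem~\ref{thm:Matrnumbers} as
\[
P(t)=\prod_{k\ge 1}(1-t^{2k})^{-n_k}
\]
and solving iteratively (multiplying successively by $(1-t^{2k})^{n_k}$) yields $(n_1,n_2,n_3,n_4,n_5,n_6)=(2,1,3,3,7,12)$. To realise this many algebraically independent algebraic classes, I would take, in each degree $2k\le 12$, the fundamental classes $[D_\sigma]\in H^{2k}(\Matr,\QQ)$ of the toroidal strata indexed by matroidal cones $\sigma$ of dimension $k$ --- these are well-defined rational classes because every matroidal cone is simplicial and so $\Matr$ is a rational homology manifold --- together with the odd Hodge Chern classes $\lambda_1$, $\lambda_3$, $\lambda_5$ in degrees $2$, $6$, $10$ respectively. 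The classification of matroidal cones of dimension at most $6$ up to $\GL(g,\ZZ)$-equivalence produces $1,1,2,3,5,10$ orbits in dimensions $1$ through $6$, so these candidate classes account for $2,1,3,3,6,10$ of the $n_k$, leaving a deficit of one generator in degree $10$ and two in degree $12$, precisely as the corollary asserts.

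The main obstacle, and the reason one must appeal to the more detailed machinery of Section~\ref{sec:alg}, is proving algebraic independence of the candidate generators and producing the missing degree-$10$ and degree-$12$ generators. For independence, the $\lambda_{2i+1}$ are pulled back from $\Sat$, where they are algebraically independent by Borel's theorem, while each stratum class $[D_\sigma]$ vanishes after restriction to $\ab\subset\Matr$; pairing with explicit test cycles supported in suitable strata then separates the remaining classes. The missing generators in degrees $10$ and $12$ should be produced by pushing the Gysin spectral sequence computation that underlies Theorem~\ref{thm:Matrnumbers} one step further in those degrees, and isolating the cohomology classes that fail to be expressible as polynomials in classes already identified --- this is the step whose verification is most sensitive to the combinatorics of the matroidal fan, and where I would expect to spend most of the technical effort.
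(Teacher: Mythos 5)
Your proposal is correct and follows essentially the same route as the paper: the H-space/Hopf-algebra argument for the polynomial structure (the paper states this in the paragraph preceding the corollary, and the extension of the product to $\Matr[g_1]\times\Matr[g_2]\to\Matr[g_1+g_2]$ does reduce to closure of matroidal cones under direct sum), the Poincar\'e series of Theorem~\ref{thm:Matrnumbers} to count generators, and the strata classes and $\lambda$-classes of Section~\ref{sec:alg} to realize them. Your generator counts $(2,1,3,3,7,12)$ and matroidal-cone counts $(1,1,2,3,5,10)$ agree with the paper's; the only (minor) difference is that the paper gets independence of the strata classes for free from the $E_1$-degeneration of the Gysin spectral sequence, which puts each stratum's contribution in its own direct summand, rather than from ad hoc test cycles.
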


In Section \ref{sec:alg} we will also discuss two natural subrings of the cohomology ring of the partial compactification consisting of all the simplicial (i.e.~corresponding to stack-smooth strata) cones of $\Perf$, which includes the smooth locus and the matroidal locus  $\Matr$ as open subsets.
More precisely we will investigate
what we call the {\em strata algebra}, which is generated by the fundamental classes of the strata corresponding to the various simplicial cones in the decomposition,
and the {\em  boundary algebra}, which is generated by polynomials in divisorial boundary components of $\Perf(2)$ invariant under the deck group action, where in each case we also add
the Hodge classes $\lambda_i$. This gives a supply of geometrically defined  cohomology classes. It turns out that neither of these subrings suffices to generate the entire
cohomology.
\begin{prop}
Neither the strata algebra nor the boundary algebra  span
the stable cohomology of  $\Perf$.
\end{prop}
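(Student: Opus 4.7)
The plan is to establish both non-spanning statements by explicit dimension counts in low degree. The proof of Theorem \ref{thm:Perfnumbers} is effective: applying the Gysin spectral sequence to the stratification of $\Perf$ by cones of the perfect cone decomposition and tracking the $\Sp_{2g}(\ZZ)$-invariant cohomology of each stratum, one obtains not merely the numerical dimension of $H^{g(g+1)-k}(\Perf,\QQ)$ in low degree $k$, but also an explicit spanning set of classes indexed by the contributing strata together with the invariant cohomology classes on each. This is the data we will compare against the two subrings.

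For the strata algebra, each fundamental class attached to a simplicial cone $\sigma$ of dimension $d$ contributes a generator in degree $2d$, and products of such classes are determined by the incidences of cones in the perfect cone fan. Together with the odd Hodge classes $\lambda_1,\lambda_3,\lambda_5,\dots$ (which lie in the stable cohomology by Borel's theorem), this gives finitely many generators in any fixed cohomological degree; enumerating monomials and imposing the relations coming from the fan combinatorics yields an upper bound for the dimension of the strata algebra in each degree.

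For the boundary algebra, the generators are the odd $\lambda_i$ together with the symmetric polynomials in the divisorial boundary components of $\Perf(2)$ that are invariant under the deck group, so the algebra is generated in degree at most $2$ apart from the Hodge classes. Its Poincar\'e series can again be estimated degree by degree, keeping track of the relations obtained by pushing forward invariant monomials under the finite cover $\Perf(2)\to\Perf$.

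The argument then concludes by exhibiting a specific degree in which both upper bounds are strictly smaller than the stable Betti number from Theorem \ref{thm:Perfnumbers}. In view of Corollary \ref{cor:matr}, which already identifies one additional generator in degree $10$ and two additional generators in degree $12$ beyond strata and Hodge classes for $\Matr$, the natural candidates are degree $10$ or degree $12$, where the stable Betti numbers from Theorem \ref{thm:Perfnumbers} are $38$ and $83$ respectively. The main obstacle will be the explicit bookkeeping of relations among products of generators in each subring: for the strata algebra these come from fan combinatorics and the $\Sp_{2g}(\ZZ)$-action, while for the boundary algebra they come from the pushforward of invariant monomials under $\Perf(2)\to\Perf$. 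Once those relations are controlled, the comparison with the stable Betti numbers $1,2,4,9,18,38,83$ completes the proof.
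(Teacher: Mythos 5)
Your strategy coincides with the paper's: compare, in a single fixed low degree, an upper bound for the dimension of each subalgebra against the stable Betti numbers of Theorem \ref{thm:Perfnumbers}. However, as written the proposal defers precisely the step that constitutes the proof --- the enumeration --- and it also overestimates what that step requires. Since only an \emph{upper} bound on each subalgebra is needed, there is no ``bookkeeping of relations'' to control at all: it suffices to count a spanning set of monomials, and any undetected linear relations among them would only make the subalgebras smaller. The paper carries this out in degree $10$, where the stable Betti number is $38$. The degree-$10$ part of either subalgebra is spanned by the monomials involving a $\lambda$-class together with the pure monomials of degree $10$. The former span a space of dimension at most $21$: namely $\lambda_1$ times the $18$ stable classes of degree $8$ (which the degree-$8$ analysis shows lie in both subalgebras), plus $\lambda_5$, $\lambda_3\beta_1^2$ and $\lambda_3\beta_2$. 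For the boundary algebra, the pure part is spanned by the $\Sp(2g,\ZZ/2\ZZ)$-invariant quintics in the $D_m$, which are classified by the coincidence patterns and $\ZZ/2\ZZ$-linear relations among the indices $m_1,\dots,m_5$ as in \cite{grhu1}, giving exactly $16$ of them; for the strata algebra, the pure part is spanned by the degree-$10$ monomials in the fundamental classes of strata, counted by partitions of $5$ weighted by the number of perfect cone orbits in each codimension ($6+3+2+2+1+1+1=16$). In either case the dimension is at most $21+16=37<38$, and that inequality is the entire proof.

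Two smaller points. First, you hedge between degree $10$ and degree $12$; degree $10$ already suffices, and it is the cleaner choice because in degree $12$ the paper can only give upper bounds ($80$ and $79$ against $83$) without ruling out relations, which it does not need to do since the proposition is already established in degree $10$. Second, invoking Corollary \ref{cor:matr} as evidence for where to look is circular: the ``additional generators'' in degrees $10$ and $12$ recorded there are themselves extracted from the same Section~\ref{sec:alg} computation that proves this proposition, so they cannot serve as independent input.
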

This proposition is simply a numerical statement --- at the end of the paper we will see that both
the boundary algebra and the strata algebra in degree 12 have dimension less than the stable cohomology.

Indeed, one expects the geometrical interpretation of stable cohomology to be easier when restricting to suitable open subsets of $\Perf$,  as also the
discussion of the matroidal locus $\Matr$ shows. One can ask this question not only for the matroidal locus, but also
for other geometrically relevant partial compactifications.
A first step in this direction is Theorem~\ref{thm:Stdstabilizes}, where we prove that the stable cohomology of the union $\Std$ of the strata of $\Perf$ corresponding to the standard cones $\langle x_1^2,\dots,x_n^2\rangle$ is freely generated over the stable cohomology of $\ab$ by the fundamental classes of the strata.

\section*{Acknowledgements}
The first author is grateful to Dmitry Zakharov for many stimulating discussions on topics related to the cohomology of families of abelian varieties. We are grateful to Ben Moonen for bringing results of Thompson \cite{thompson} to our attention, and indicating how they relate to the questions we address in this paper. We thank Jayce Getz who asked us in a talk whether a statement such as Theorem \ref{algebraic} could hold. We thank Nicholas Shepherd-Barron and Giulio Codogni for comments and explanations regarding stable cohomology having the Hopf algebra structure.
We are indebted to Eduard Looijenga for useful discussions and suggestions, and Mathieu Dutour Sikiri\'c for answering our questions on the combinatorics of cone decompositions.
The second author would like to thank Barbara Fantechi for discussions on stacks and their singularities.
Finally, the third author would like to thank Dick Hain for answering her questions about the stable cohomology of non-trivial local systems over $\ab$.
We all would like to thank the referees for their careful reading of the paper. The final revision of the paper was completed at the Institute for Advanced Study in Princeton and
the first and the second author would like to thank IAS for excellent working conditions.

\section{Method of proof}

In this section, we review the ideas and the techniques involved in computing cohomology of toroidal compactifications.

We start by pointing out that working in the stable range has the powerful advantage that one can make use of Borel's results on the stable cohomology of the group $\Sp(2g,\ZZ)$.
Indeed, the stable cohomology of $\ab$, being a $K(\Sp(2g,\ZZ),1)$, is equal to the stable cohomology of $\Sp(2g,\ZZ)$, and was computed by Borel \cite{borel1, borel2}.
Moreover, it is also known that the stable cohomology of all non-trivial irreducible rational
local systems on $\ab$ (equivalently, of irreducible rational representations of the algebraic group $\Sp(2g)$) is simply zero
by a strengthening \cite[Theorem 3.2]{hain-infinitesimal} of Borel's stability theorem.

We now proceed to compute the cohomology of various partial toroidal compactifications of $\ab$ obtained by adding various boundary strata.
To explain this,
we first recall that any toroidal compactification $\ab^{\tor}$ of $\ab$ admits a natural map $\varphi:\ab^{\tor} \to \Sat$
to the Satake compactification. The latter is the disjoint union
\begin{equation}\label{equ:beta}
  \Sat = \ab \sqcup \ab[g-1] \sqcup \ldots \sqcup \ab[0],
\end{equation}
and we set $\beta_i:=\varphi^{-1}(\Sat[g-i])$ and
$\beta_i^0:= \beta_i \setminus \beta_{i-1}=\varphi^{-1}(\ab[g-i])$. Each $\beta_i^0$ in turn is stratified
by sets $\stratum{}\sigma$ where $\sigma$ runs through all cones in the perfect cone decomposition
of $\Sym^2_{\geq 0}(\RR^{i})$ whose general element has rank $i$.   We shall refer to such cones as {\it rank $i$ cones}.
The stratum $\stratum{}\sigma$ is the quotient
of a torus bundle $\calT(\sigma)$ over the $i$-fold fiber product
$\ua[g-i]^{\times i}:=\ua[g-i]\times_{\ab[g-i]}\ldots\times_{\ab[g-i]}\ua[g-i]\to\ab[g-i]$ of the universal family by a finite group $G(\sigma)$, namely
the stabilizer
of the cone $\sigma$ in $\GL(i,\ZZ)$. The codimension of  $\stratum{}\sigma$ in $\ab^{\tor} $ equals the dimension
of $\sigma$.

Throughout this work we shall make use of the perfect cone toroidal compactification $\Perf$.
Our method of computing the stable cohomology of $\Perf$, and the outline of the paper, are as follows. In section
\ref{sec:Sp} we recall the relevant results of Borel and Hain, and other necessary background on representations
of the symplectic group. At the end of that section, as a warmup, we prove by using Gysin's exact sequence
the stabilization (in close to top degree) of the cohomology of the Satake compactification, proving Theorem
\ref{thm:Satake}.

For the toroidal case, in Section \ref{sec:Xg} we compute the stable cohomology of the universal family  $\ua$.
This result will be  generalized later in  Section \ref{sec:Xgn} to a computation of the stable cohomology of the fiber product $\ua[g]^{\times n}$ for $n$ fixed.
Since Mumford's partial toroidal compactification $\ab\sqcup\beta_1^0$ is equal to $\ab\sqcup(\ua[g-1]/\i)$,
by using the Gysin exact sequence, in Section \ref{sec:Agpartial} we are then able to compute the stable cohomology of the partial toroidal compactification. Note that these computations are in fact easier in the stable cohomology
than similar calculations in \cite{huto,huto2} for $g=3,4$, as it turns out that in the stable range only the even degree part of cohomology is non-zero, and thus the Gysin long exact sequence breaks up into short exact sequences.

This idea --- of computing the cohomology of an individual stratum,  and then gluing it to the union of the previously considered strata --- is the method that allows us to prove the existence of the stable cohomology of the perfect cone compactification (and by restriction --- of the matroidal locus) in general.
In Section \ref{sec:basic} we review the construction of the perfect cone toroidal compactification, and prove its various combinatorial properties.
In Section \ref{sec:stabletorusbundles} we then use the  Leray spectral sequence to argue that the cohomology
of the torus bundles $\calT(\sigma)$, with $\sigma$ fixed, and $g$ varying, stabilizes, and moreover (only in the stable range!) vanishes in odd degree.

This computation requires dealing with certain local systems $\VV_{\ud{\mu}}$ corresponding to
irreducible representations
of the algebraic group $\Sp(2g)$
indexed by some partition $\ud{\mu}$. By the results of Borel and Hain we know the stable cohomology
$H^k(\ab,\VV_{\ud{\mu}})$, for $k<g$, of which we have to compute the $G(\sigma)$-invariant part. We finally compute the stable cohomology (still in low degree, as opposed to the close to top degree) of each individual stratum $\stratum{}\sigma$, and note that this computation in fact works for any cone, not necessarily just a cone in the perfect cone decomposition.

Adding the strata one by one, we use the Gysin (excision) exact  sequence for cohomology with compact support
to compute the stable cohomology of various partial toroidal compactifications of $\ab$.
For this, we need both the new stratum itself, and the total space obtained to be smooth, so that we can
use Poincar\'e duality to identify $H^{\operatorname{top}-k}_c$ with the previously computed $H^k$ (for $k<g$). In Section \ref{sec:stable} we argue that this process indeed stabilizes,
thus proving our main Theorem \ref{thm:main} on the stabilization of the cohomology of $\Perf$ in close to top degree; our proof also yields the stabilization of the cohomology of $\Matr$ in low degree, Theorem \ref{thm:Matrstabilizes}.
Finally, we use the same techniques to describe the stable cohomology of the open subset of $\Matr$ consisting of degenerations given by standard cones and prove that its stable cohomology is freely generated by the odd $\lambda$-classes and the fundamental classes of the boundary strata (Theorem \ref{thm:Stdstabilizes}).

In Section \ref{sec:next} we demonstrate how this process works, by dealing explicitly with $\beta_2^0$, which is the union of two strata corresponding to semiabelic varieties with the normalization of the toric part being $\PP^1\times \PP^1$ and two copies of $\PP^2$, respectively. Moreover,
applying methodically the procedure described above yields in particular $\coh[\topd-k]{\Perf}$
for $k\le 13$, and we give the results of these computations in Section \ref{sec:numbers}, proving Theorem \ref{thm:Perfnumbers}, from
which Theorem \ref{thm:Matrnumbers} easily follows.
We note that computing the entire stable cohomology of $\Perf\setminus\Perf[g,\operatorname{sing}]$ or of the matroidal locus by our method currently seems
out of reach, as it would involve going through all the possible combinatorics of the strata. Finally in Section \ref{sec:alg} we construct algebraic representatives for (much of) the stable cohomology in low degree. Moreover we discuss the strata algebra and the boundary algebra.

\section{Review of  stable cohomology of local systems on $\ab$}\label{sec:Sp}
Symplectic local systems over $\ab$ play a central role in our computations. Let us start by fixing the notation.
Let $\ud\mu = (\mu_1\geq \mu_2\geq\dots\geq \mu_g)$ be a Young diagram with at most $g$ rows. Equivalently, we can view $\ud\mu$ as an arbitrary partition of length at most $g$. If we denote by $\VV$ the standard rational representation of the group scheme $\Sp(2g)$, then the representation $\VV_{\ud\mu}$ of $\Sp(2g)$ is the irreducible representation of highest weight in the tensor product
$$
\Sym^{\mu_1-\mu_2}(\VV)\otimes \Sym^{\mu_2-\mu_3}\big(\bigwedge^2\VV\big) \otimes \dots\otimes \Sym^{\mu_{g-1}-\mu_g}\big(\bigwedge^{g-1}\VV\big)\otimes\big(\bigwedge^{g}\VV\big)^{\otimes\mu_g}.
$$

One can generalize the definition of $\VV_{\ud\mu}$ to obtain a local system over $\ab$, by applying the same construction as above, but now
setting $\VV$ to be the local system $R^1\pi_*\QQ$, where  $\pi:\;\ua\rightarrow\ab$ is the universal family over $\ab$. Note that $\VV_{\ud\mu}$, defined in this way, is naturally a Hodge module of weight equal to the weight $w(\ud\mu)=\sum_{i=1}^{g}\mu_i$ of $\ud\mu$.
 One can obtain more Hodge modules by taking Tate twists of $\VV_{\ud\mu}$. We will denote such Tate twists by
$$
\VV_{\ud\mu}(k)=\VV_{\ud\mu}\otimes \QQ(k)
$$
for all Young diagrams $\ud\mu$ with at most $g$ rows and all $k\in\ZZ$; the Hodge weight of $\VV_{\ud\mu}(k)$ is then $w(\ud\mu)-2k$. Such Tate twists can be  interpreted in the context of representation theory by working with representations of the group of symplectic similitudes
$$\GSp(2g,\QQ)=\left\{M\in\Mat(2g,2g)|MJ{}^tM=\eta J, \eta\in\QQ^*\right\},$$
where $J=\left(\begin{smallmatrix}0&{\bf 1}_g\\{-\bf 1}_g&0\end{smallmatrix}\right)$ denotes the symplectic matrix.
The Tate Hodge module $\QQ(-1)$ is the inverse $\eta^{-1}$ of the multiplier representation $\eta:\;\GSp(2g,\QQ)\rightarrow\QQ^*$, and $\VV$ is the product of the standard representation of $\GSp(2g,\QQ)$ and $\eta^{-1}$.

\begin{thm}[\cite{borel1,borel2}, {\cite[Theorem 3.2]{hain-infinitesimal}}]
\label{thm:stable}
For the group cohomology of the symplectic group with coefficients in the rational representation $\VV_{\ud\mu}$,
for all $k<g$ we have
$$
 H^k(\Sp(2g,\ZZ),\VV_{\ud{\mu}})=\begin{cases}
 \QQ[x_2,x_6,x_{10}, \ldots]_k& \hbox{if $\ud{\mu}=0$}\\
 0& \hbox{otherwise,}\\
  \end{cases}
$$
where in the first case
 this is the degree $k$ subspace of the graded ring generated by classes $x_i$. In particular the stable cohomology is zero in every odd degree.
\end{thm}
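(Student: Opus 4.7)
The plan is to reduce the computation of $H^k(\Sp(2g,\ZZ),\VV_{\ud\mu})$ to that of the sheaf cohomology $H^k(\ab,\VV_{\ud\mu})$, using the fact that the Siegel upper half space $\HH_g=\Sp(2g,\RR)/U(g)$ is contractible, so $\ab$ is a $K(\Sp(2g,\ZZ),1)$ orbifold and the two cohomologies agree. Borel's framework then describes the stable range $k<g$ in terms of relative Lie algebra cohomology $H^*(\mathfrak{sp}(2g,\RR),U(g);\VV_{\ud\mu}\otimes\RR)$, effectively reducing the arithmetic problem to invariant theory for the real Lie group acting on its symmetric space.

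For the trivial coefficient case $\ud\mu=0$, the key step is to identify the algebra of $\Sp(2g,\RR)$-invariant forms on $\HH_g$ with the cohomology of the compact dual symmetric space, and to pass to the inverse limit as $g\to\infty$. The outcome is a polynomial algebra on generators in degrees $4i-2$, geometrically realized by the odd Chern classes $\lambda_{2i-1}:=c_{2i-1}(\EE)$ of the Hodge bundle $\EE$ over $\ab$. Setting $x_{4i-2}:=\lambda_{2i-1}$ produces the asserted ring $\QQ[x_2,x_6,x_{10},\dots]$, with the characteristic vanishing in odd degree coming from the fact that all invariant generators on the Hermitian symmetric space live in even degree.

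For the nontrivial case $\ud\mu\neq 0$, I would invoke Hain's strengthening of Borel's vanishing theorem. The mechanism is the same $(\mathfrak{g},K)$-cohomology framework: the stable cohomology $H^k(\ab,\VV_{\ud\mu})$ in the range $k<g$ is captured by $U(g)$-invariants in $\bigwedge^k\mathfrak{p}^*\otimes\VV_{\ud\mu}$, where $\mathfrak{p}$ is the $(-1)$-eigenspace of the Cartan involution. The point is that a nonzero highest weight $\ud\mu$ forces any $U(g)$-invariant vector to be a combination of weights that involves at least $g$ factors of $\mathfrak{p}^*$, so such invariants cannot appear in degree $k<g$. This produces the desired vanishing for every nontrivial $\ud\mu$.

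The hard part in both cases is the explicit Lie-theoretic weight count: for $\ud\mu=0$ one must identify all invariant forms and show they stabilize, while for $\ud\mu\neq 0$ one must bound below the degree in which $U(g)$-invariants can appear in $\bigwedge^\bullet\mathfrak{p}^*\otimes\VV_{\ud\mu}$ in terms of the size of $\ud\mu$. In a modern treatment one invokes \cite{borel1,borel2} for the trivial coefficients and \cite[Thm.~3.2]{hain-infinitesimal} for nontrivial local systems rather than redoing the underlying representation-theoretic computations, so I would treat this theorem as a black box drawn directly from these sources.
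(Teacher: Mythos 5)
The paper gives no proof of this theorem: it is quoted verbatim from Borel \cite{borel1,borel2} for the trivial-coefficient case and from Hain \cite[Theorem 3.2]{hain-infinitesimal} for the vanishing with non-trivial $\VV_{\ud\mu}$, exactly as you ultimately do by treating it as a black box from those sources. Your supplementary sketch of the $(\mathfrak{g},K)$-cohomology mechanism is a reasonable outline of how those references proceed, so your proposal matches the paper's treatment.
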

The classes $x_i$ in fact are algebraic and have a geometric meaning; the geometric content of the above theorem is the following
\begin{cor}[Borel \cite{borel1,borel2}]\label{Ag}
The stable cohomology of $\ab$ is freely generated by the Chern classes $\lambda_{2i+1}:=c_{2i+1}(\EE)$ of the Hodge bundle, i.e.~for $k<g$ the vector space $H^{k}(\ab,\QQ)$ is the vector space generated by monomials of total degree $k$ in $\lambda_1,\lambda_3,\ldots$ (where the degree of $\lambda_{2i+1}$ is equal to $4i+2$).
\end{cor}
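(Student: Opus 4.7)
The plan is to combine Theorem~\ref{thm:stable} (applied with $\ud\mu=0$) with an identification of Borel's abstract polynomial generators $x_{4i+2}$ with the odd Hodge classes $\lambda_{2i+1}=c_{2i+1}(\EE)$.

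Since the Siegel upper half space $\HH_g$ is contractible and $\Sp(2g,\ZZ)$ acts on it with only finite stabilizers, rational cohomology of the analytic quotient $\ab=\HH_g/\Sp(2g,\ZZ)$ coincides with the group cohomology $H^*(\Sp(2g,\ZZ),\QQ)$. Applying Theorem~\ref{thm:stable} with trivial coefficients then gives, for $k<g$, that $H^k(\ab,\QQ)$ is the degree-$k$ piece of a free graded-commutative polynomial algebra with one generator in each degree $4i+2$ for $i\ge0$, and that it vanishes for $k$ odd.

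Next I would produce candidate generators from the Hodge bundle. The odd Chern classes $\lambda_{2i+1}$ lie in precisely the degrees $4i+2$ dictated by Theorem~\ref{thm:stable}, and they absorb all the information carried by the even Chern classes: indeed, $R^1\pi_*\CC$ is a flat complex vector bundle (it carries the Gauss--Manin connection) whose underlying smooth complex bundle splits, via the Hodge decomposition, as $\EE\oplus\overline{\EE}$. Since $c_i(\overline{\EE})=(-1)^ic_i(\EE)=c_i(\EE^\vee)$, flatness of $R^1\pi_*\CC$ yields the universal relation
\[
c(\EE)\cdot c(\EE^\vee)=1
\]
in $H^*(\ab,\QQ)$. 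Extracting the degree-$2n$ component expresses each even Chern class $\lambda_{2k}$ as a polynomial in $\lambda_1,\lambda_3,\dots,\lambda_{2k-1}$, so the subalgebra of $H^*(\ab,\QQ)$ generated by all $\lambda_i$ coincides with the subalgebra generated by the odd $\lambda_{2i+1}$ alone.

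To finish, it suffices to show that each $\lambda_{2i+1}$ is a nonzero scalar multiple of the corresponding Borel generator $x_{4i+2}$: then a comparison of graded dimensions between two free graded-commutative algebras on generators in the same degrees forces the natural map $\QQ[\lambda_1,\lambda_3,\ldots]\to H^*(\ab,\QQ)$ to be an isomorphism in the stable range. This non-vanishing is the main obstacle, and is the one place where one must invoke the proof of Theorem~\ref{thm:stable} rather than just its statement: Borel's generators are produced from the relative Lie algebra cohomology of the pair $(\mathfrak{sp}(2g,\RR),\mathfrak{u}(g))$, equivalently from the cohomology of the compact dual of the Siegel symmetric space, which carries a tautological holomorphic rank-$g$ bundle whose Chern classes generate this cohomology. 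That bundle pulls back, under uniformization, to the Hodge bundle $\EE$ on $\ab$, so $\lambda_{2i+1}$ is indeed a nonzero multiple of $x_{4i+2}$, and the corollary follows.
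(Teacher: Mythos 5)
Your argument is correct, and it is exactly the derivation that the paper's citation points to: the paper gives no proof of Corollary~\ref{Ag} at all, deferring entirely to Borel (with the identification of the generators $x_{4i+2}$ with the Chern classes of the Hodge bundle being the content of Borel's construction together with Mumford's proportionality theorem in the noncompact case, which the paper cites elsewhere as \cite{mumhirz}). Your three steps --- passing from $H^k(\ab,\QQ)$ to $H^k(\Sp(2g,\ZZ),\QQ)$, killing the even $\lambda$'s via $c(\EE)\cdot c(\EE^\vee)=1$, and identifying $\lambda_{2i+1}$ with a nonzero multiple of $x_{4i+2}$ modulo decomposables via the cohomology of the compact dual $\Sp(2g)/U(g)$ --- are all sound, and the final dimension count correctly upgrades ``the $\lambda_{2i+1}$ generate'' to ``they generate freely'' in the stable range.
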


As a warmup, and to show one little step of our general machinery, we now prove Theorem \ref{thm:Satake}
on the stable cohomology of the Satake compactification, by using the Gysin sequence (to be discussed
in more detail below).
\begin{proof}[Proof of Theorem \ref{thm:Satake}]
Indeed, recall that the Satake compactification $\Sat$ is the union $\ab\sqcup\ab[g-1]\sqcup\ldots\sqcup\ab[0]$, i.e.~we
have $\Sat=\ab\sqcup\partial\Sat=\ab\sqcup\Sat[g-1]$. Thus by the Gysin exact sequence for a closed subvariety
$\Sat[g-1]\subset\Sat$
(see \cite[Cor. 5.51]{PetersSteenbrink})
\begin{multline}
 \ldots\to \coh[\ell-1]{\Sat[g-1]}\to
\cohc[\ell]{\ab}
\to \coh[\ell]{\Sat}
\to \\
\to \coh[\ell]{\Sat[g-1]}
\to \cohc[\ell+1]{\ab}\to\ldots
\end{multline}
In the stable range $g>k$ we have $\ell=g(g+1)-k>g(g-1)=2\dim_\CC\Sat[g-1]$, hence the cohomology of $\Sat[g-1]$ vanishes, so that we simply get $\cohc[g(g+1)-k]\Sat=\cohc[g(g+1)-k]\ab$ for $g>k$.
The latter cohomology by the Poincar\'e duality for the smooth (stack) $\ab$ is dual to $H^k(\ab)$, which equals $\QQ[\lambda_1,\dots,\lambda_{2m+1},\dots]_k$ by Borel's stability theorem (see corollary \ref{Ag}). This implies our claim.
\end{proof}
\begin{rem}
We note that if trying to compute the stable cohomology of the image of the Deligne--Mumford compactification $\overline{\calM_g}$ of the moduli space of curves in $\Vor,\Perf,$ or $\Matr$, to which the Torelli maps extends by \cite{namikawabook},\cite{albr},\cite{mevi}, respectively, we would fail, i.e.~the cohomology would not stabilize. Indeed, for any $i>1$ the Torelli map on the boundary divisor $\Delta_i\subset\overline{\calM_g}$ would extend to an embedding of $\calM_i\times\calM_{g-i}$ into the Satake compactification of $\calM_g$, and then to any compactification of $\calA_g$, so that we would have $\lfloor g/2\rfloor-2$ loci in the Torelli image of $\overline{\calM_g}$ each of codimension 3, which similarly to above would each contribute a class to the stable cohomology in degree $\topd-6$.
\end{rem}

Our proof of stabilization theorems \ref{thm:main} and \ref{thm:Matrstabilizes} for $\Perf$
and $\Matr$ will also use the Gysin exact sequence to compute the cohomology step-by-step by gluing the strata
together. However, notice that the situation will be much more involved, as in both of these cases the boundary
has complex codimension 1, and its cohomology will play a role in the computation. We will thus need to
understand the stable cohomology of individual boundary strata, and will start by investigating the first one,
the boundary of the partial toroidal compactification, which is the universal Kummer family.

\section{Leray spectral sequence and the stable cohomology of the universal family of ppav $\ua\to\ab$}\label{sec:Xg}
In this section we use the Leray spectral sequence to set up the computation of the stable cohomology of a fixed stratum in a toroidal compactification, and demonstrate how this method works by computing the stable cohomology of the universal family of ppav.
 
Indeed, let $\pi:\ua\to\ab$ be the universal family of ppav, considered as a stack. In particular all fibers of $\pi$ are abelian varieties, whereas the generic fiber of the associated map on coarse moduli spaces is actually the Kummer variety, as any ppav has the involution $\i: z \mapsto -z$.

The Leray spectral sequence computes the cohomology of the universal family $H^\pu(\ua,\QQ)$ in terms of local systems on the base. Indeed, it has terms of the form $E_2^{p,q}:=H^p(\ab,R^q\pi_*\QQ)$, and converges $E^{p,q}_\pu\Rightarrow H^{p+q}(\ua,\QQ)$.
To understand the higher direct images under $\pi_*$, recall that the cohomology of an abelian variety is the exterior algebra over the
space of  one-forms, i.e.~$\coh{A}=\bigwedge^\pu H^1(A,\QQ)$.
This description globalizes to describe the higher direct images of the constant sheaf $\QQ$ on $\ua$. Since globally the first cohomology gives the local system $\VV_1$ on $\ab$ (corresponding to the standard representation of $\Sp(2g)$ on $\QQ^{2g}$), we need to recall the formula for the decomposition of exterior powers of the standard representation of the symplectic group into a sum of irreducible representations, which by \cite[Theorem~17.5]{fuhabook} is
\begin{equation}\label{wedgedecompose}
\bigwedge^i{\VV_1}=\bigoplus_{j=0}^{\lfloor i/2\rfloor}\VV_{1^{i-2j}}(-j)
\end{equation}
for $i\leq g$.
We thus obtain
\begin{lm}\label{Rqpi}
For the universal family $\pi:\ua\to\ab$, for any $q\leq g$
$$
 R^q\pi_*\QQ=
 \bigwedge^{q} \VV_1=\VV_{1^{q}}(0)\oplus\VV_{1^{q-2}}(-1)\oplus\cdots\oplus\VV_{p(q)}(-\lfloor q  /2\rfloor),
$$
with $p(q)$ being the remainder of $q$ modulo $2$.
\end{lm}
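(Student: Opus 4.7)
The plan is to combine two essentially standard ingredients: a fiberwise computation of $R^q\pi_*\QQ$ in terms of the first cohomology of abelian varieties, and the explicit plethystic decomposition of exterior powers of the standard symplectic representation recalled in equation \eqref{wedgedecompose}.

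First I would verify the identification $R^q\pi_*\QQ \cong \bigwedge^q(R^1\pi_*\QQ)$ as variations of Hodge structure on $\ab$. Fiberwise over a point $[A]\in\ab$, the stalk of $R^q\pi_*\QQ$ is $H^q(A,\QQ)$, and since $A$ is a complex torus one has the classical isomorphism $H^q(A,\QQ)\cong\bigwedge^q H^1(A,\QQ)$ coming from the identification of $H^\bullet(A,\QQ)$ with the exterior algebra on one-forms. This isomorphism is induced by cup product, which is a natural operation on the higher direct images, so it globalizes to an isomorphism of local systems (and in fact of variations of Hodge structure, taking the Tate twists into account because cup product shifts the Hodge weight additively). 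By definition $R^1\pi_*\QQ = \VV_1$, the local system on $\ab$ associated with the standard representation of $\Sp(2g)$ on $\QQ^{2g}$; thus $R^q\pi_*\QQ \cong \bigwedge^q\VV_1$ as objects in the category of admissible variations of Hodge structure on $\ab$.

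Next I would invoke the symplectic branching formula \eqref{wedgedecompose}, which for $q\leq g$ gives
$$\bigwedge^q\VV_1 \;=\; \bigoplus_{j=0}^{\lfloor q/2\rfloor} \VV_{1^{q-2j}}(-j),$$
the point being that each factor of the ``symplectic trace'' that one peels off corresponds to contracting with the symplectic form, which contributes a Tate twist by $\QQ(-1)$. Combining this with the previous step yields the stated decomposition, with the summand of lowest non-trivial weight given by $\VV_1$ when $q$ is odd and by the trivial representation $\VV_0=\QQ$ (twisted by $(-q/2)$) when $q$ is even, matching the parity term $\VV_{p(q)}(-\lfloor q/2\rfloor)$ in the statement.

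I do not anticipate a genuine obstacle: the only subtlety worth emphasizing in the write-up is the bookkeeping of the Tate twists, and the fact that the formula \eqref{wedgedecompose} is stated in the stable range $i\leq g$, which is exactly the hypothesis $q\leq g$ made in the lemma. The passage from the fiberwise computation to a statement about $R^q\pi_*\QQ$ as a local system (or Hodge module) is formal, since $\pi$ is smooth and proper.
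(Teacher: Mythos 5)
Your proposal is correct and follows exactly the route the paper takes: it identifies $R^q\pi_*\QQ$ with $\bigwedge^q\VV_1$ by globalizing the fiberwise isomorphism $H^\pu(A,\QQ)\cong\bigwedge^\pu H^1(A,\QQ)$, and then applies the decomposition \eqref{wedgedecompose} quoted from Fulton--Harris. The only difference is that you spell out the Tate-twist bookkeeping and the naturality of cup product slightly more explicitly than the paper, which states the lemma as an immediate consequence of these two facts.
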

\begin{rem}
The cohomology of local systems of odd weight over $\ab$ vanishes in odd degrees. In particular, this means that the cohomology of $R^q\pi_*\QQ$ vanishes if $q$ is odd. This reflects the fact that the cohomology of $\ua$ (recall that we always work with rational coefficients) coincides with the cohomology of its coarse moduli space, the universal Kummer family, and that the odd degree cohomology of the Kummer variety $A/\i$ is $0$ for every abelian variety $A$ because it is simply the subspace of the cohomology of the ppav $A$ that is invariant under the involution $\i:\;z\mapsto -z$.
\end{rem}
Using the Leray spectral sequence now allows us to compute the stable cohomology of the universal family:
\begin{prop}\label{Xg}
The stable cohomology of the universal family $\ua$ is generated over the stable cohomology of $\ab$ by the class $\Theta$ of the universal theta divisor trivialized along the zero section. More precisely, this means that for any $k<g$, the vector space $H^k(\ua)$ is  generated by degree $k$ monomials in the classes $\Theta,\lambda_1,\lambda_3,\ldots$, where $\Theta$ has degree 2 and $\lambda_{2i+1}$ has degree $4i+2$. In particular the stable cohomology in any odd degree is zero.
\end{prop}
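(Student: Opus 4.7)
The plan is to compute $H^k(\ua,\QQ)$ via the Leray spectral sequence for $\pi:\ua\to\ab$,
$$E_2^{p,q}=H^p(\ab,R^q\pi_*\QQ)\Rightarrow H^{p+q}(\ua,\QQ),$$
using the decomposition of $R^q\pi_*\QQ$ from Lemma~\ref{Rqpi} together with Borel's vanishing from Theorem~\ref{thm:stable}.

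The first step is to describe the $E_2$ page in the stable range $p+q<g$. By Lemma~\ref{Rqpi} we have $R^q\pi_*\QQ=\bigoplus_{j=0}^{\lfloor q/2\rfloor}\VV_{1^{q-2j}}(-j)$, and Theorem~\ref{thm:stable} forces all summands of $H^p(\ab,R^q\pi_*\QQ)$ to vanish except the one coming from the trivial local system $\VV_0$. That summand appears if and only if $q$ is even, with multiplicity one and Tate twist $-q/2$, so
$$E_2^{p,q}=\begin{cases}H^p(\ab,\QQ)(-q/2) & \text{if $q$ is even},\\ 0 & \text{if $q$ is odd,}\end{cases}$$
in the stable range; in particular, all differentials $d_r$ with $r$ even vanish automatically, as they move an even row into an odd one.

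The second step is to exhibit global lifts for every entry of the $E_2$ page in the stable range. The pullbacks of monomials in $\lambda_{2i+1}$ lift $E_2^{p,0}=H^p(\ab,\QQ)$ by Corollary~\ref{Ag}. The universal theta class $\Theta\in H^2(\ua,\QQ)$ restricts on each fiber to the theta class $\theta\in H^2(A,\QQ)$, and for $2j<g$ the power $\theta^j$ is the distinguished nonzero generator of the one-dimensional $\Sp(2g,\ZZ)$-invariant subspace of $H^{2j}(A,\QQ)$ — precisely the summand $\VV_0(-j)$ in Lemma~\ref{Rqpi}. Hence $\Theta^j$ projects nontrivially onto $E_\infty^{0,2j}\subseteq E_2^{0,2j}$, forcing equality. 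By multiplicativity of the Leray spectral sequence and the fact that $E_2^{p,2j}\cong E_2^{p,0}\otimes E_2^{0,2j}$ in the stable range, the products $\pi^*(\text{monomial in }\lambda_{2i+1})\cdot\Theta^j$ globally lift a basis of each $E_2^{p,2j}$. Every differential out of or into such entries is then zero, so the stable part of the spectral sequence degenerates at $E_2$.

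The lifted classes $\pi^*(\text{monomial in }\lambda_{2i+1})\cdot\Theta^j$ of total degree $k<g$ therefore form a basis of $H^k(\ua,\QQ)$, and the odd-degree stable cohomology vanishes since all the generators live in even degrees. The main obstacle is precisely the degeneration of the Leray spectral sequence: the parity-plus-lifting argument above handles it cleanly, without invoking Deligne's degeneration for smooth projective morphisms (although the latter applies as well, since $\pi$ is smooth proper with the relatively ample class $\Theta$).
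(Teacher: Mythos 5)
Your proof is correct, and its skeleton is the same as the paper's: the Leray spectral sequence for $\pi:\ua\to\ab$, the decomposition of $R^q\pi_*\QQ$ from Lemma~\ref{Rqpi}, and the Borel--Hain vanishing of Theorem~\ref{thm:stable} to reduce the $E_2$ page to the trivial-local-system part. The one place where you genuinely diverge is the degeneration argument. The paper observes that in the stable range $E_2^{p,q}$ is nonzero only when \emph{both} $p$ and $q$ are even (since $H^p(\ab,\QQ)$ itself vanishes for odd $p<g$ by Borel), and since any $d_r$ shifts exactly one of $p,q$ by an odd amount, every differential has zero source or zero target; this kills all $d_r$ at once, with no need to treat odd $r$ separately. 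You use only the parity of $q$, which disposes of even $r$, and then handle the remaining differentials by exhibiting the classes $\pi^*(\lambda\text{-monomial})\cdot\Theta^j$ as permanent cocycles spanning each stable $E_2^{p,2j}$ via multiplicativity. Both routes are valid; the paper's is shorter, while yours has the merit of building the generators into the degeneration argument itself, so that the identification of $\Theta$ as the generator over $H^\pu_{\mathrm{stable}}(\ab)$ comes for free rather than being read off from $H^2(\ua)$ afterwards as the paper does. One small point worth adding if you want the full strength of the paper's statement: the paper also records that $\Theta^g$ is algebraically equivalent to $g!$ times the zero section, which is what guarantees that $\Theta$ is \emph{stably algebraically independent} from the classes pulled back from $\ab$ (your basis claim implicitly contains this, but only because the dimension count from the filtration is carried out degree by degree).
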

\begin{proof}
Indeed, combining Lemma \ref{Rqpi} above with Theorem \ref{thm:stable} on the stable cohomology of local systems on $\ab$, we get for $p<g$
$$
 E_2^{p,q}=H^p(\ab,R^q\pi_*\QQ)=\begin{cases}
  H^p(\ab,\QQ)& \hbox{if $p$ and $q$ are even}\\ 0 & \hbox{else}.
  \end{cases}
$$
From a theorem of Deligne \cite{deligne} it follows that the Leray spectral sequence for the projective map $\pi$ degenerates at $E_2$. In our case this is in fact immediate to see directly: in the stable range $p+q\le g$ only the terms of $E_2$ with both $p$ and $q$ even are  non-zero, thus for any differential
$$
 d_r:E_r^{p,q}\to E_r^{p+r,q-r+1}
$$
either the source or the target space is zero, and therefore all differentials vanish. We thus obtain
$$
 H^k(\ua,\QQ)=\bigoplus\limits_{p+q=2k} E_\infty^{p,q}=\bigoplus\limits_{p+q=2k} E_2^{p,q}=\bigoplus\limits_{i=0}^k H^{2i}(\ab,\QQ)(i-k),
$$
for $k=2j<g$ even, while $H^k(\ua,\QQ)=0$ for $k<g$ odd.
(Here we have a direct sum of Hodge structures because for $p+q=k<g$ all $E_2^{p,q}$ carry Tate Hodge-structures of the same weight.)
In words, the above statement says that the stable cohomology of $\ua$ in degree $2j$ is the sum of stable cohomology of $\ab$ in all even degrees up to $2j$, i.e.~for each $i\le j$ we have a copy of $H^{2i}(\ab)$. This means that as an algebra over the stable cohomology of $\ab$, the stable cohomology of $\ua$ is generated by one element, of degree 2 and Hodge type $(1,1)$.
Indeed, denote by $\Theta\subset\ua$ the universal symmetric theta divisor trivialized along the zero section. Then under the decomposition $H^2(\ua,\QQ)=H^0(\ab,R^2\pi_*\QQ)(-1)\oplus H^2(\ab,\QQ)$ we see that $\Theta$ has zero projection onto the second summand, and thus it generates the first summand, which implies that it is a generator of the stable cohomology of $\ua$ over the stable cohomology of $\ab$, as claimed. Moreover, since the class $\Theta^g$ on $\ua$ is algebraically equivalent to $g!$ times the zero section of the universal abelian variety, see \cite{denmur},\cite{hainnormal},\cite{voisinnotes}, it follows that $\Theta$ is stably algebraically independent with the classes pulled back from $\ab$.
\end{proof}

\section{Gysin exact sequence and the stable cohomology of the partial toroidal compactification}\label{sec:Agpartial}
In this section we set up the method, using the Gysin exact sequence, to compute the cohomology of the union of some partial toroidal compactification and one more stratum, and
we demonstrate how this method works by computing the stable cohomology of Mumford's partial toroidal compactification $\ab'$ of $\ab$.

Recall that Mumford's \cite{mumforddimag} partial toroidal compactification is
the union $\ab'=\ab\sqcup (\ua[g-1]/\i)$ (where $\ua[g-1]/\i$ is still considered as a stack, i.e.~is the universal Kummer family).
The Gysin, also sometimes called  excision long exact sequence for a closed subvariety of a quasi-projective variety
is then the following:
\begin{multline}\label{Gysin}
 \ldots\to \cohc[\ell-1]{\ua[g-1]}\to
\cohc[\ell]{\ab}
\to \cohc[\ell]{\ab'}
\to \\
\to \cohc[\ell]{\ua[g-1]}
\to \cohc[\ell+1]{\ab}\to\ldots
\end{multline}
We remark here that this sequence respects mixed Hodge structures (see \cite[Cor. 5.51]{PetersSteenbrink}).
Since both $\ab'$ and $\ua[g-1]$ are smooth Deligne--Mumford stacks,
it follows from the usual Poincar\'e duality that
the cohomology $H_c^\ell$ is dual to $H^{{\rm top}-\ell}$, where ${\rm top}$ denotes the real dimension of the space.
Noticing that the boundary $\ua[g-1]$
has complex codimension $1$ in $\ab'$, from the above we thus get the dual long exact sequence (where to keep track of things we denote $k=\dim_\RR \ab-\ell=\dim_\RR\ua[g-1]+2-\ell$)
\begin{multline}\label{partialGysin}
 \ldots\rightarrow
\coh[k-2]{\ua[g-1]}(-1)\rightarrow
\coh[k]{\ab'}\rightarrow\\
\rightarrow
\coh[k]{\ab}\rightarrow
\coh[k-1]{\ua[g-1]}(-1)\rightarrow\ldots
\end{multline}

In general, for $k$ and $g$ arbitrary, this exact sequence is non-degenerate.
For instance, the connecting homomorphisms are non-trivial already for $g=3,4$, as described  in
\cite{huto,huto2}.
However, in the stable range the situation is very simple, as all the odd cohomology of each term turns out to be zero, and we immediately obtain the stable cohomology of Mumford's partial toroidal compactification.

\begin{prop}\label{Agpartial}
The stable cohomology of Mumford's partial toroidal compactification $\ab'$  is generated by the classes $\lambda_i$ and  by the class $D$ of the boundary. More precisely,  for any $k<g$, the vector space $H^k(\ab')$ is  generated by degree $k$ monomials in the classes $D,\lambda_1,\lambda_3,\ldots$, where $D$ has degree 2 and $\lambda_{2i+1}$ has degree $4i+2$.
\end{prop}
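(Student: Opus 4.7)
The plan is to feed Borel's stability theorem together with Proposition \ref{Xg} into the Gysin sequence \eqref{partialGysin}, and exploit the fact that in the stable range everything is concentrated in even degree.

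First, both $\coh[k]{\ab}$ and $\coh[k]{\ua[g-1]}$ vanish for odd $k<g$, by Borel's Corollary \ref{Ag} and Proposition \ref{Xg} respectively. Since the involution $\i$ acts by $(-1)^q$ on $R^q\pi_*\QQ$ via its action on the fiber $H^1$, the $\i$-invariant subspace already exhausts the full stable cohomology in even degree, so the Kummer quotient $\ua[g-1]/\pm\i$ has the same rational cohomology as $\ua[g-1]$ in the stable range. Substituting into \eqref{partialGysin}, every other term vanishes, all connecting maps are zero, and one obtains short exact sequences
\begin{equation*}
 0 \to \coh[k-2]{\ua[g-1]}(-1) \to \coh[k]{\ab'} \to \coh[k]{\ab} \to 0
\end{equation*}
for even $k<g$, and $\coh[k]{\ab'}=0$ for odd $k<g$.

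Next, to identify generators I would argue as follows. The Hodge bundle extends to any toroidal compactification, so the $\lambda_{2i+1}$ lift to $\coh{\ab'}$ and surject onto their namesakes in $\coh{\ab}$ under the restriction in the short exact sequence. The class $D\in H^2(\ab')$ of the boundary divisor lies in the kernel of this restriction. By Mumford's formula, the restriction of $D$ to $\partial=\ua[g-1]/\pm\i$ is (up to sign and a factor of two) the class $\Theta$, so by the projection formula the Gysin pushforward
\begin{equation*}
 \coh[k-2]{\ua[g-1]/\pm\i}(-1) \to \coh[k]{\ab'}
\end{equation*}
sends a monomial $\Theta^j\cdot\prod\lambda_{2i_r+1}$ to a scalar multiple of $D^{j+1}\cdot\prod\lambda_{2i_r+1}$, which is a polynomial in $D$ and the $\lambda_{2i+1}$. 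Thus the short exact sequence shows that the stable cohomology of $\ab'$ is generated by $D$ and the $\lambda_{2i+1}$.

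Finally, a dimension count confirms algebraic independence: from the short exact sequence and Proposition \ref{Xg} one finds
\begin{equation*}
 \dim\coh[2k]{\ab'} = \sum_{i=0}^{k}\dim\coh[2i]{\ab}
\end{equation*}
in the stable range, which coincides exactly with the degree-$2k$ dimension of the free polynomial algebra on generators $D,\lambda_1,\lambda_3,\ldots$ of degrees $2,2,6,10,\ldots$. Hence no relations hold in stable degree, and $\coh{\ab'}$ is freely generated as claimed. The only mildly subtle step is invoking the restriction formula $D|_\partial\sim\Theta$ to identify the Gysin image with polynomials in $D$ and the $\lambda_{2i+1}$'s; the rest is an unwinding of the spectral sequence information already collected in Proposition \ref{Xg}, made easy by the vanishing of all odd-degree stable cohomology.
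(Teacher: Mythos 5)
Your proposal is correct and follows essentially the same route as the paper: the Gysin sequence \eqref{partialGysin} splits into short exact sequences because all stable odd cohomology of $\ab$ and of $\ua[g-1]$ vanishes, and the generators are then identified via Mumford's formula $D|_D=-2\Theta$ together with Proposition \ref{Xg}. The extra details you supply (that the Kummer quotient has the same stable cohomology as $\ua[g-1]$, the projection-formula computation of the Gysin pushforward, and the dimension count for freeness) are correct elaborations of steps the paper leaves implicit.
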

\begin{proof}
We use the Gysin exact sequence \eqref{partialGysin} above to obtain the stable cohomology of $\ab'$.
Recall that in the stable range the cohomology of $\ab$ was computed by Borel, see Corollary \ref{Ag}, and the stable cohomology of the universal Kummer family is given in Proposition \ref{Xg}. In particular, both of them vanish in odd degree, and thus the long exact sequence \eqref{partialGysin} splits into short exact sequences. We thus obtain
$$
 H^k(\ab')=H^k(\ab)\oplus H^{k-2}(\ua[g-1])(-1)
$$
for $k=2j$, while all the odd-dimensional stable cohomology of $\ab'$ is zero.
For $k=2$ we see that the two generators are $\lambda_1$ for the first summand, and the fundamental class of the boundary ---
which we denote  $D$ --- for the second summand. It follows from \cite[Prop. 1.8]{mumforddimag} or \cite[Lemma 1.1]{vdgeerchowa3} that
$D|_D=-2\Theta$ . The result now follows from Proposition \ref{Xg}.
\end{proof}
\begin{rem}
We observe that the stable cohomology of $\ab'$ is equal to that of $\ua$.
The above proof gives a geometric reason for this:
we consider the inclusion $\ua[g-1]\hookrightarrow\ab'$, and pull back cohomology under it. Then the classes $\lambda_i$ on $\ab'$ pull back to $\lambda_i$ on $\ua[g-1]$, while the class $D$ pulls back to $-2\Theta$.
\end{rem}
While the above proposition does not let us deduce anything about the stabilization of $H^k(\Perf)$ for $k\ll g$, on the dual side we have computed the first few cohomology groups with compact support:
\begin{cor}\label{cor:firststab}
For $g>4$ we have
$$
 \coh[g(g+1)]{\Perf}=\QQ\cdot 1,
\
\coh[g(g+1)-2]{\Perf}=\QQ\cdot \lambda_1^\vee\oplus \QQ\cdot D^\vee,
$$
$$
 \coh[g(g+1)-1]{\Perf}=\coh[g(g+1)-3]{\Perf}=0,
$$
where $\lambda_1^\vee$ and $D^\vee$ denote the images under $\cohc{\ab'}\rightarrow\cohc{\Perf}=\coh{\Perf}$ of the cohomology classes in $\cohc[g(g+1)-2]{\ab'}$ that are Poincar\'e dual to $\lambda_1$ and $D$, respectively.
\end{cor}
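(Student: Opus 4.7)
The plan is to apply the Gysin (excision) long exact sequence for compactly supported cohomology to the open embedding $\ab'\hookrightarrow\Perf$, with closed complement $Z:=\Perf\setminus\ab'$, and then reduce everything to Proposition~\ref{Agpartial} via Poincar\'e duality on the smooth Deligne--Mumford stack $\ab'$. The crucial input is a codimension bound on $Z$: by construction $\ab'=\ab\sqcup\beta_1^0$, so $Z=\bigcup_{i\geq 2}\beta_i^0$ is the union of the strata $\stratum{}{\sigma}$ attached to cones $\sigma$ of rank $\geq 2$. Since every stratum $\stratum{}{\sigma}$ has complex codimension $\dim\sigma$ in $\Perf$, and every rank-$i$ cone satisfies $\dim\sigma\geq i$, the locus $Z$ has complex codimension at least $2$, hence real dimension at most $g(g+1)-4$. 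Consequently $H^\ell_c(Z,\QQ)=0$ for every $\ell>g(g+1)-4$.

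Plugging this into the excision sequence
\begin{equation*}
\ldots\to H^{\ell-1}_c(Z)\to H^{\ell}_c(\ab')\to H^{\ell}_c(\Perf)\to H^{\ell}_c(Z)\to H^{\ell+1}_c(\ab')\to\ldots
\end{equation*}
one gets, for $\ell\in\{g(g+1),g(g+1)-1,g(g+1)-2\}$, that both neighbouring $Z$-terms vanish, so $H^\ell_c(\Perf)=H^\ell_c(\ab')$. Since $\ab'$ is smooth as a stack, Poincar\'e duality identifies the right-hand side with $H^{g(g+1)-\ell}(\ab')^\vee$. Proposition~\ref{Agpartial} (applicable since $g(g+1)-\ell\leq 3<g$) then supplies the basis: $H^0(\ab')=\QQ$ produces the class $1$ in top compact-support degree, and $H^2(\ab')=\QQ\lambda_1\oplus\QQ D$ produces the duals $\lambda_1^\vee$ and $D^\vee$ in codegree $2$, while $H^1(\ab')=0$ forces the vanishing in codegree $1$.

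For $\ell=g(g+1)-3$ the argument needs one extra step. The relevant piece of the sequence is
\begin{equation*}
H^{g(g+1)-4}_c(Z)\to H^{g(g+1)-3}_c(\ab')\to H^{g(g+1)-3}_c(\Perf)\to H^{g(g+1)-3}_c(Z).
\end{equation*}
The right-hand term still vanishes by the dimension bound on $Z$. The middle-left term equals $H^3(\ab')^\vee$ by Poincar\'e duality, and this is zero by Proposition~\ref{Agpartial}, which asserts that the stable cohomology of $\ab'$ is concentrated in even degrees. Thus $H^{g(g+1)-3}_c(\Perf)$ is sandwiched between two zero groups and vanishes.

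There is no real obstacle: the only delicate point is verifying that every rank-$i$ cone of the perfect decomposition has dimension at least $i$, but this is immediate from the fact that $i$ linearly independent rays (e.g.\ $x_1^2,\dots,x_i^2$) are needed to support a form of generic rank $i$. The remainder is routine bookkeeping with the excision sequence, Poincar\'e duality on the smooth stack $\ab'$, and the vanishing of odd stable cohomology established in Proposition~\ref{Agpartial}.
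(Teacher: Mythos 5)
Your proof is correct and takes essentially the same route as the paper: the Gysin sequence for $\ab'\subset\Perf$, the codimension-$2$ bound on the complement from Proposition~\ref{prop:basic}, and Poincar\'e duality on the smooth stack $\ab'$ combined with Proposition~\ref{Agpartial} (plus compactness of $\Perf$ to identify $H_c^\pu$ with $H^\pu$). Your extra care at $\ell=g(g+1)-3$, where the adjacent term $H^{g(g+1)-4}_c(\Perf\setminus\ab')$ is nonzero but the vanishing of $H^3(\ab')$ still forces the conclusion, is a point the paper's one-line claim glosses over.
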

\begin{proof}
Since $\ab'$ is smooth, applying Poincar\'e duality the above proposition yields the statement that for $k<g$ the group $H^{g(g+1)-k}_c(\ab')$ is generated by classes dual to $\lambda_{2j+1}$ and to $D$. The complement $\Perf\setminus\ab'$ has real codimension 4 in $\Perf$. Indeed, this is a special case of  Proposition  \ref{prop:basic} (we note that this is special to the perfect cone compactification, and for
example does not hold for the second Voronoi compactification, for which we thus have no result). Hence for $k<4$ we have $H^{g(g+1)-k}_c(\Perf\setminus\ab')=0$ and thus, by
the Gysin exact sequence for $(\Perf\setminus\ab')\subset\Perf$, for $k<4$ we have
$H^{g(g+1)-k}_c(\Perf)=H^{g(g+1)-k}_c(\ab')$. Since $\Perf$ is compact, we finally have
$H^{g(g+1)-k}_c(\Perf)=H^{g(g+1)-k}(\Perf)$ and this gives the corollary.
\end{proof}

\section{Stabilization of the cohomology of $\ua^{\times n}$}\label{sec:Xgn}

In this section we describe the stable cohomology of the $n$'th fiber product $\ua^{\times n}$ of the universal family $\ua\rightarrow \ab$, for a fixed $n$.
It turns out (we thank Ben Moonen for pointing this out and explaining it to us) that the description of the subring in the cohomology of a very general ppav generated by divisors follows from the results of Thompson \cite{thompson} on invariant theory for the symplectic group (this construction is also a special case of a much more general deep construction of Looijenga and Lunts \cite{lolu} in cohomology and of Moonen \cite{moonen} in the Chow ring). The results of Thompson \cite{thompson} are formulated in terms of representations of the symplectic group, which we think of as local systems on $\calA_g$; we give the reformulation in terms of cohomology classes.

Indeed, $\ua^{\times n}$ admits projection maps $p_i:\ua^{\times n}\to\ua$ for $i=1,\dots,n$, and $p_{jk}:\ua^{\times n}\to\ua^{\times 2}$ for $1\leq j<k\leq n$. Let $\Theta\subset\ua$ be the class of the universal theta divisor trivialized along the zero section, and let $P\subset\ua^{\times 2}$ be the class of the universal Poincar\'e divisor trivialized along the zero section. Denote then $T_i:=p_i^*\Theta$, and $P_{jk}:=p_{jk}^* P$. For a very general ppav $A$, the restrictions of these classes to $A^{ n}$ generate the N\'eron--Severi group. We will now prove that these classes freely generate the stable cohomology of $\ua^{\times n}$.
\begin{thm}\label{thm:Xgn}
The cohomology $H^k(\ua^{\times n})$ is independent of $g$ for $k<g$, and as an algebra over the stable cohomology of $\ab$, is generated by the classes $T_i, P_{jk}$. In particular, all stable cohomology classes on $\ua^{\times n}$ are algebraic.
\end{thm}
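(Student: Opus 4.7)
The plan is to generalize the argument of Proposition \ref{Xg} from $n=1$ to arbitrary fixed $n$ via the Leray spectral sequence for the projection $\pi_n:\ua^{\times n}\to\ab$. By Künneth (since $\pi_n$ is the $n$-fold fiber product of the universal family $\pi:\ua\to\ab$) together with Lemma~\ref{Rqpi}, we have
$$
R^q(\pi_n)_*\QQ \;=\; \bigwedge^q\!\bigl(R^1(\pi_n)_*\QQ\bigr) \;=\; \bigwedge^q\!\bigl(\VV_1^{\oplus n}\bigr) \;=\; \bigoplus_{q_1+\cdots+q_n=q} \bigwedge^{q_1}\!\VV_1\otimes\cdots\otimes\bigwedge^{q_n}\!\VV_1 ,
$$
which decomposes further into irreducible local systems by \eqref{wedgedecompose} applied in each tensor factor.

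The $E_2$-page is $E_2^{p,q}=H^p(\ab,R^q(\pi_n)_*\QQ)$. Applying Theorem~\ref{thm:stable}, in the stable range $p<g$ only the trivial constituents of $R^q(\pi_n)_*\QQ$ contribute, and their contribution is $H^p(\ab,\QQ)$ with multiplicity equal to $m(q,n):=\dim\bigl(\bigwedge^q(\VV_1^{\oplus n})\bigr)^{\Sp(2g)}$, a number which stabilizes in $g$. Since the non-trivial $\Sp$-invariants in $\VV_1^{\otimes N}$ only occur for $N$ even, and since $H^p(\ab)$ is concentrated in even $p$ by Corollary~\ref{Ag}, the stable $E_2$-page lives entirely in even bidegrees. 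Consequently every differential $d_r:E_r^{p,q}\to E_r^{p+r,q-r+1}$ has a source or target of opposite parity and so vanishes, and the spectral sequence degenerates at $E_2$ in the stable range.

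It remains to identify the generators. Each class $T_i\in H^2(\ua^{\times n})$ has zero projection onto $H^2(\ab)$ (as $\Theta$ is trivialized along the zero section, see the proof of Proposition \ref{Xg}) and therefore lies in the $\QQ(-1)$ summand of $\bigwedge^2\VV_1$ occupying the $i$-th slot. Similarly $P_{jk}$, trivialized along the zero section, provides the $\QQ(-1)$ summand inside $\VV_1\otimes\VV_1$ coupling the $j$-th and $k$-th slots. To conclude, one must check that arbitrary monomials in $\{T_i, P_{jk}\}$ freely generate the entire trivial isotypic component of $R^\bullet(\pi_n)_*\QQ$ over the stable cohomology of $\ab$. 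This is exactly the content of Thompson's invariant theory \cite{thompson} (equivalently, Weyl's First Fundamental Theorem for $\Sp(2g)$ combined with antisymmetrization within each slot): in the stable range, the $\Sp$-invariants in tensor products of exterior powers of $\VV_1$ are spanned by products of symplectic pairings between pairs of tensor factors, and the only relations are the antisymmetrization relations themselves, which translate into the freeness of monomials in $T_i,P_{jk}$ modulo the obvious symmetry $P_{jk}=P_{kj}$.

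The main obstacle is the last step, i.e.~matching the combinatorics of stable $\Sp$-invariants with monomials in the geometric classes $T_i,P_{jk}$; once this identification is in hand, algebraicity of the stable cohomology is immediate, as $T_i$ and $P_{jk}$ are divisor classes and the $\lambda_{2i+1}$ are Chern classes of the Hodge bundle. The stabilization assertion $H^k(\ua^{\times n})$ independent of $g$ for $k<g$ then follows formally from the fact that $m(q,n)$ and $H^p(\ab)$ are both stable in the relevant range.
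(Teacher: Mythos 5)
Your proposal is correct and follows essentially the same route as the paper: the Leray spectral sequence for $\ua^{\times n}\to\ab$, degeneration at $E_2$ in the stable range (the paper cites Deligne's theorem for projective maps, but also notes the parity argument you use), reduction to the $\Sp(2g)$-invariants of $\bigwedge^{2l}(H^1(A)\otimes\QQ^n)$ via Theorem~\ref{thm:stable}, and the identification of those invariants with $\Sym^l(\Sym^2\QQ^n)$, i.e.\ with degree-$l$ polynomials in $T_i,P_{jk}$, by \cite[Theorem 3.7]{thompson}. The step you flag as the main obstacle is handled in the paper exactly by that citation, so there is no gap.
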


\begin{proof}
We want to compute the stable cohomology of the $n$-th fiber product of the universal family using the Leray spectral sequence associated with the natural map $\pi^{\times n}:\;\ua^{\times n}\rightarrow\ab $.
Since $\pi^{\times n}$ is a projective map, the Leray spectral sequence degenerates at $E_2$, so that we have
$$
E_\infty^{p,q}=E_2^{p,q}=\cohloc p{\ab}{R^q\pi^{\times n}_*\QQ}
.$$
Recall that the constant local system $\QQ = \VV_0$ is the only one with non-zero stable cohomology (from now on by abuse of language we will call $\VV_0$ the {\em trivial} local system, as it corresponds to the trivial representation). A first consequence of this is that in the stable range $p< g$ the $E_\infty$-terms carry Tate Hodge-structures of weight $p+q$, and hence
\begin{equation}\label{directsum}
\coh[m]{\ua^{\times n}}=\bigoplus_{p+q=m}E_\infty^{p,q}=\bigoplus_{p+q=m}\cohloc p{\ab }{R^q\pi^{\times n}_*\QQ}
\end{equation}
holds for $m\leq g$.

To compute the stable cohomology of $\ua^{\times n}$, it only remains to compute the $\Sp(2g)$-invariant part of $\coh [q]{A^{ n}}$ for an abelian $g$-fold $A$ and $q\leq g$, because this is what contributes the trivial  summands  (which recall, means equal to $\VV_0$) in the local system $R^q\pi^{\times n}_*\QQ$. As local systems of odd weight have zero cohomology, we only have to deal with the case $q=2l$.

The cohomology ring of an abelian variety is the exterior algebra of its first cohomology group, so that we have
$$\coh[2l]{A^{ n}} = \bigwedge^{2l}(\coh[1]A\otimes \QQ^n).
$$
and by \cite[Theorem 3.7]{thompson}, for $2l\leq g$ the $\Sp(2g)$-invariant part of the above cohomology group is  isomorphic to $\Sym^l(\Sym^2\QQ^n)$.

The restriction to $A^n$ of the classes $T_i$, $P_{jk}$ lies in $H^{1,1}(A^{ n})$, and thus the action of the symplectic group on their span is given by the symmetric square of the standard representation of $\Sp(2g)$
acting on $H^1(A^{ n})$.
Therefore, for $l=1$ the $\Sp(2g)$-invariant part of the cohomology is generated by these classes, so that we can identify $\Sym^2\QQ^n$ with the span of the classes $T_i$, $P_{jk}$ and $\Sym^l(\Sym^2\QQ^n)$ with the space of degree $l$ polynomials in $T_i$, $P_{jk}$.
In view of the isomorphism~\eqref{directsum}, this implies that in the stable range $m\leq g$ the classes $T_i, P_{jk}\in\coh[2]{\ua^{\times n}}$ are algebraically independent generators of the cohomology of $\ua^{\times n}$ as an algebra over the stable cohomology of $\ab$.
\end{proof}

\begin{rem}
Thompson's results allow us to describe completely the subalgebra of the rational cohomology of $\ua^{\times n}$ generated by $T_i$, $P_{jk}$, also outside the stable range.
Specifically, Theorem 3.4 of \cite{thompson} corresponds to the statement that the ideal of relations among the classes $T_i$ and $P_{jk}$ is generated by $(g+1)$'st powers of divisors, i.e.~by relations of the form $\left(\sum m_i^2 T_i+\sum m_jm_k P_{jk}\right)^{g+1}=0$,
for arbitrary $m_1,\ldots,m_n\in\ZZ$. In fact \cite[Theorem 3.7]{thompson} describes the cohomology $H^\pu(A^{ n})$ as a representation  of the symplectic group.
\end{rem}
In the following sections, we will often consider the action of the group  $\GL(n,\ZZ)$ on $\ua^{\times n}$.
Indeed, on each fiber $A^{ n}$ of the map $\ua^{\times n}\to\ab$ a matrix $N\in\GL(n,\ZZ)$ acts by the corresponding automorphism $M={}^tN^{-1}$, adding the points together: $(a_1,\ldots,a_n)\mapsto M(a_1,\ldots,a_n)$ considered as points of $A^{ n}$.
Since the classes $T_i$ and $P_{ij}$ lie in $H^{1,1}(A^{ n})$, the action on them is given by the symmetric square of the action of $M$ on $H^1(A^{ n})$. To write it down explicitly, it is convenient to denote $P_{ii}:=2T_i$ for $i=1,\ldots,n$ (and also $P_{ij}:=P_{ji}$ for $i>j$), and then the action is given by the symmetric square of the standard representation of the symmetric group (we are grateful to Ben Moonen and Dmitry Zakharov for discussions on these topics)
\begin{equation}\label{shiftformula}
 M(P_{ij})=\sum\limits_{1\le a\le n}\sum\limits_{1\le b\le n} M_{ia}M_{jb}P_{ab}.
\end{equation}

\section{The perfect cone compactification and details of our approach}\label{sec:basic}
Our results are specific to the perfect cone compactification. As we shall make use of some of the properties of the
perfect cone (also known as first Voronoi) fan decomposition, we shall review this here.
For the original definition of this fan we refer the reader to
\cite{voronoi1}, \cite{voronoi21}, \cite{voronoi22},
for modern treatments and further results
see \cite{namikawabook}, \cite{shepherdbarron}, or \cite{mevi}.

To define the perfect cone decomposition, we consider the open cone $\Sym_{>0}^2(\RR^g)$ of all
real positive definite $g \times g$ matrices and its rational closure $\Sym_{\operatorname{rc}}^2(\RR^g)$,
i.e.~the cone of all semi-positive definite matrices whose kernel is defined over $\QQ$.
The function
$$
\mu: \Sym_{\operatorname{rc}}^2(\RR^g) \to \RR_{>0}
$$
$$
\mu(Q):= \operatorname{min} \{ Q(\xi)  \mid \xi \in \ZZ^g \setminus 0 \}
$$
defines for every $Q \in \Sym_{>0}^2(\RR^g)$ a finite and non-empty set
$$
M(Q):= \{ \xi \in \ZZ^g \mid Q(\xi)=\mu(Q) \}.
$$
The perfect cone decomposition is then given by the union of the convex hulls
$$
\sigma(Q):= \sum_{\xi \in M(Q)}\RR_{\geq 0}{}^t\xi\xi.
$$
The group $\GL(g,\ZZ)$ operates on the collection of these cones with a finite number of orbits.
In the proof that the cohomology stabilizes we use a special property of the perfect cone decomposition,
namely the fact that the number of
codimension $i$ strata stabilizes, i.e.~is independent of $g$ if $i \leq g$.
This follows easily from the above definition
(see Proposition \ref{prop:basic}).

This is indeed a special property of the perfect cone decomposition. In particular, one cannot expect
that the second Voronoi decomposition has stable cohomology. Recall that the boundary divisors of a toroidal
compactification
correspond to the set of orbits under $\GL(g,\ZZ)$ of $1$-dimensional cones in the corresponding fan. There is only one such
cone in the perfect cone decomposition, namely the square of a primitive linear form. Let $l(g)$ be the number of
inequivalent $1$-dimensional cones in the second Voronoi decomposition.
Note that $l(g)$ is the number of components of the boundary of $\Vor$, so that $l(g)=\dim H^{g(g+1)-2}(\partial\Vor)$ holds. As $H^{g(g+1)-1}_c(\ab)$ vanishes
(it is Poincar\'e dual to $H^1(\ab)$), the Gysin long exact sequence associated with the inclusion of the boundary into $\Vor$ implies that $H^{g(g+1)-2}(\Vor)$ surjects onto $H^{g(g+1)-2}(\partial\Vor)$, so that $\dim H^{g(g+1)-2}(\Vor)\geq l(g)$ holds.
It is well known that $l(2)=l(3)=1$, then we have $l(4)=2$ \cite{voronoi1}, 
\cite{erry2}, $l(5)=9$ \cite{degr}, while $l(6) \geq 20,000$ \cite{degr}.

In general, and this was pointed out to us by V.~Alexeev, we have at least  $l(g) \geq g-3$. This estimate comes from the root lattices $D_n$. Indeed,
the quadratic form associated to such a root lattice defines a second Voronoi cone and by the results of Baranovskii  and Grishukhin \cite{BG} the
barycentric rays of these cones for $4 \leq n \leq g$ give independent rigid forms.

We have already introduced the stratification of $\Perf$ into the closed subvarieties $\beta_i$ which
lie over $\ab[g-i]^{\operatorname{Sat}}$  under the map
$\varphi: \Perf \to \Sat$, see \eqref{equ:beta}. Recall that $\beta_i^0=\beta_i  \setminus \beta_{i-1}$.
As for any toroidal compactification, the locally closed sets $\beta_i^0$  are further stratified
into strata $\stratum{} \sigma \subset \beta_i$, corresponding to the orbits of rank $i$ cones $\sigma$. More precisely let
$\sigma \subset \Sym^2_{\operatorname{rc}}(\RR^i)$
be a rank $i$ cone in the perfect cone decomposition. Given such a cone $\sigma$, one associates with it
a torus bundle $q(\sigma): \calT(\sigma) \to \ua[g-i]^{\times i}$. The fiber of the torus bundle $q(\sigma)$ is the torus
$\Torus_i/\Torus_{\sigma}$ where $\Torus_i= \Sym^2(\ZZ^i) \otimes \CC^*$ and $\Torus_{\sigma} \subset \Torus_i$ is given by
$\Torus_{\sigma} = (\Span( \sigma) \cap \Sym^2(\ZZ^i)) \otimes \CC^*$.
Denoting by $p_i:\ua[g-i]^{\times i}\to\ab[g-i]$ the
universal $i$-fold product, we thus have a double fibration
$\pi(\sigma)=p_i \circ q(\sigma) : \calT(\sigma) \to \ua[g-i] \times
\ldots \times \ua[g-i] \to \ab[g-i]$.
The stratum associated to
$\sigma$ is then equal to the quotient $\stratum{}\sigma=G(\sigma) \backslash \calT(\sigma)$ where
$G(\sigma)$ is the stabilizer of $\sigma$ in $\GL(i,\ZZ)$.
We have
$\beta_i^0=\bigsqcup_{{\rm all\ }\sigma{\rm \,of}{\rm \, rank} \, i}\beta(\sigma)$ and
$\beta_i=\bigsqcup_{{\rm all\ }\sigma{\rm \,of}{\rm \, rank} \, \geq i}\beta(\sigma)$.
Recall that the complex codimension of $\stratum{}{\sigma}$ in $\Perf$ is equal to $\dim \sigma$.

In the previous sections we discussed the topology of the partial compactification $\ab'$. We now want to add
further boundary strata and at this point it becomes important to us that we make use of specific properties of the
perfect cone decomposition.
The most important property of the perfect cone decomposition for our purposes is that the stratum $\beta_i$ has complex codimension
$i$ within $\Perf$, a fact we have already used in Corollary \ref{cor:firststab}. As we have pointed out before, the situation is very different
for example for $\Vor$, where boundary divisors appear arbitrarily deep into the boundary.

\begin{prop}\label{prop:basic}
The following holds for the perfect cone decomposition $\Perf$:\\
(i) The codimension of $\beta_i$ is equal to $i$.\\
(ii) Let $\ell$ be an integer.
If $g \geq \ell$, then the number of strata $\stratum{}\sigma$ of codimension $\ell$ in
$\Perf$ is given by an integer $N(\ell)$  independent of $g$.
\end{prop}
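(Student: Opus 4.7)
The strategy is to read off both statements from the definition of the perfect cone decomposition and from the fact that any cone of dimension $\ell$ has rank at most $\ell$ and is therefore supported on a sublattice of $\ZZ^g$ of rank at most $\ell$.

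For (i), recall that $\beta_i$ is stratified as $\beta_i=\bigsqcup_{\operatorname{rank}\sigma\ge i}\stratum{}{\sigma}$ and that $\operatorname{codim}\stratum{}{\sigma}=\dim\sigma$, so the codimension of $\beta_i$ equals the minimum of $\dim\sigma$ taken over perfect cones whose generic element has rank at least $i$. Any perfect cone is the positive hull of rank-one forms ${}^t\xi\xi$, and since the rank of a sum of rank-one forms equals the dimension of the $\QQ$-span of the vectors $\xi$ involved, we need at least $i$ linearly independent generators to produce a generic element of rank $i$, giving $\dim\sigma\ge i$. Conversely, the standard cone $\langle {}^t e_1e_1,\dots,{}^t e_ie_i\rangle$ is the perfect cone $\sigma(I_i)$ of the identity form on $\ZZ^i$, since $M(I_i)=\{\pm e_1,\dots,\pm e_i\}$; it has dimension exactly $i$ and rank $i$, so lies in the perfect cone decomposition as a face of the principal cone in $\Sym^2(\RR^g)$. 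Hence $\operatorname{codim}\beta_i=i$.

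For (ii), I would count $\GL(g,\ZZ)$-orbits of $\ell$-dimensional cones $\sigma$ in the perfect cone decomposition. Let $i=\operatorname{rank}\sigma\le\ell$ and write $\sigma=\sum_k\RR_{\ge 0}\,{}^t\xi_k\xi_k$; the $\QQ$-span of the $\xi_k$ is an $i$-dimensional subspace $V_\QQ\subset\QQ^g$, and its intersection $V:=V_\QQ\cap\ZZ^g$ is a primitive sublattice of rank $i$. The cone $\sigma$ is contained in $\Sym^2(V_\RR)$ and is a rank-$i$ cone in the perfect cone decomposition on that subspace. Assuming $g\ge\ell\ge i$, the group $\GL(g,\ZZ)$ acts transitively on primitive sublattices of $\ZZ^g$ of rank $i$ by elementary divisors, and the stabilizer of the standard inclusion $\ZZ^i\subset\ZZ^g$ surjects onto $\GL(i,\ZZ)$ via block-triangular matrices. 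Combining these, the $\GL(g,\ZZ)$-orbits of rank-$i$ $\ell$-dimensional perfect cones on $\Sym^2(\RR^g)$ are in natural bijection with the $\GL(i,\ZZ)$-orbits of full-rank $\ell$-dimensional perfect cones on $\Sym^2(\RR^i)$. Summing this (finite) count over $i=1,\dots,\ell$ produces the required integer $N(\ell)$, which is manifestly independent of $g$ as soon as $g\ge\ell$.

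The main delicate point will be the compatibility of the perfect cone decomposition with the inclusion $\Sym^2(\RR^i)\hookrightarrow\Sym^2(\RR^g)$: one must check that a face of the perfect cone decomposition on $\Sym^2(\RR^g)$ whose rank-one generators lie in $\Sym^2(\ZZ^i)$ coincides with a face of the perfect cone decomposition on $\Sym^2(\RR^i)$, and conversely. I would verify this directly from the characterization $\sigma(Q)=\sum_{\xi\in M(Q)}\RR_{\ge 0}\,{}^t\xi\xi$: given a positive definite $Q_0\in\Sym^2_{>0}(\RR^i)$, one can rescale and extend it to $Q\in\Sym^2_{>0}(\RR^g)$ by a large multiple of the identity on the orthogonal complement, so that the minimum vectors $M(Q)$ on $\ZZ^g$ agree with $M(Q_0)$ on $\ZZ^i$, showing that $\sigma(Q_0)$ appears as a face of some perfect cone in $\Sym^2(\RR^g)$. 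Once this lemma is in place, both statements follow formally.
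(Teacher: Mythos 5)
Your argument is correct and follows essentially the same route as the paper's proof: the standard cone $\langle x_1^2,\dots,x_i^2\rangle$ gives the upper bound on the codimension of $\beta_i$, the linear independence of the rank-one generators gives the lower bound, and the codimension-$\ell$ strata are enumerated by $\GL(m,\ZZ)$-orbits of $\ell$-dimensional cones in the perfect cone decompositions of $\Sym^2(\RR^m)$ for $m\le\ell$. The only difference is that you spell out the compatibility of the perfect cone decompositions under $\Sym^2(\RR^i)\hookrightarrow\Sym^2(\RR^g)$ and the transitivity of $\GL(g,\ZZ)$ on primitive rank-$i$ sublattices, points the paper leaves implicit as part of the admissibility of the perfect fan.
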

\begin{proof}
(i) We first show that the codimension of $\beta_i$ is at most $i$. This follows since the standard cone
$\langle x_1^2, \ldots , x_i^2 \rangle$ belongs to the perfect cone decomposition, has dimension $i$ and rank $i$.
Conversely, consider a cone $\sigma \subset \Sym^2_{\operatorname{rc}}(\RR^i)$ in the perfect cone decomposition
of rank $i$. Since the rays of $\sigma$ are spanned by rank $1$ matrices, there must be at least
$i$ independent generators of $\sigma$ and thus the dimension of $\sigma$ is at least $i$. Therefore, the same
holds for the codimension of $\stratum{}\sigma$.\\
(ii) Let $\sigma$ be a cone which gives rise to a stratum of codimension $\ell$, i.e.~assume that $\sigma$  is of dimension $\ell$, and rank $i$.
Then $i \leq \ell$.  Choose $i$ rays in $\sigma$ such that the corresponding linear forms
are independent (over $\QQ$). These linear forms generate a (not necessarily saturated) sublattice $L$ in $\ZZ^g$; let
$L'$ be its saturation. Since the general element in $\sigma$ has rank $i$ it follows that $\sigma \subset \Sym^2_{\operatorname{rc}}(L' \otimes \RR)$.
After acting by a suitable element in $\GL(g,\ZZ)$ we can assume that $L'$ is the sublattice of $\ZZ^g$ spanned by the first $i$
unit vectors and thus that $\stratum{}\sigma\subset \beta_i^0$ and in particular  $\stratum{}{\sigma} \subset \Perf \setminus \beta_{\ell+1}$.
Hence these strata are enumerated by the $\GL(m,\ZZ)$-orbits of the cones in the perfect cone decompositions
of $\Sym^2(\ZZ^m)$ for all integers $m\leq \ell$. Clearly, the number $N(\ell)$ of such orbits is independent of $g$, for $g\ge\ell$.
\end{proof}

\smallskip
As we already said, our approach is that we use Gysin sequences to successively compute the cohomology of $\Perf$. We start with the
strata $\beta(\sigma)$ associated to rank $i$ cones $\sigma$ to compute the cohomology of $\beta_i^0$ and then keep going deeper into the boundary to prove results
about the cohomology of $\Perf$ itself.

As we have just seen,  $\stratum{}{\sigma}=G(\sigma) \backslash \calT(\sigma)$ with
$\pi(\sigma)=p_i \circ q(\sigma) : \calT(\sigma) \to \ua[g-i]^{\times i} \to \ab[g-i]$.
We will need to compute the (stable) cohomology of strata  $\stratum{}{\sigma}$ in several cases.
For this we must recall
the construction of the torus bundle  $\calT(\sigma)$ in more detail.  We first of all fix the cusp $U$ over which we work. We
shall want to work with the standard cusps, i.e.~we fix $U$ as the isotropic subspace of $\QQ^{2g}$ spanned by the first $i$
elements of the standard basis. The parabolic subgroup $P(U)$ of $\Sp(2g,\ZZ)$ which fixes $U$  is generated by elements of the
following form: The first set of generators is
$$
g_1=
\begin{pmatrix}
{\bf 1}_i& 0 & S & 0\\
0 & {\bf 1}_{g-i} & 0 & 0\\
0 & 0 & {\bf 1}_i & 0\\
0 & 0 & 0 & {\bf 1}_{g-i}
\end{pmatrix}, \text{ where }
S={}^tS \in \Sym^2(\ZZ^i).
$$
These matrices generate the center $P'(U)$ of the unipotent radical of $P(U)$ and dividing out by this (normal) subgroup
gives $\Torus_i \times \CC^{i(g-i)} \times \HH_{g-i}$ where $\Torus_i = \Sym^2(\ZZ^i) \otimes_\ZZ \CC^*$.

The second set of generators consists of elements of the form
$$
g_2=
\begin{pmatrix}
{\bf 1}_i& 0 & 0 & 0\\
0 & A & 0 & B\\
0 & 0 & {\bf 1}_i & 0\\
0 & C & 0 & D
\end{pmatrix}, \text{ where }
\begin{pmatrix}
A & B\\
C & D
\end{pmatrix} \in \Sp(2(g-i),\ZZ),
$$
and
\begin{equation}\label{g3}
g_3=
\begin{pmatrix}
{\bf 1}_i& M & 0 & N\\
0 & {\bf 1}_{g-i} & {}^tN & 0\\
0 & 0 & {\bf 1}_i & 0\\
0 & 0 & -{}^tM & {\bf 1}_{g-i}
\end{pmatrix}, \text{ where }
M,N \in \Mat_\ZZ(i,g-i).
\end{equation}
Note that the elements of type $g_2,g_3$ generate a Jacobi group
the quotient by which is $\calT_i \to \ua[g-i]^{\times i}
\to \ab[g-i]$, where the $i$-fold universal family
$\ua[g-i]^{\times i}\to \ab[g-i]$ is the quotient of
$\CC^{i(g-i)} \times \HH_{g-i}$ by the Jacobi group,
and the fiber of the first projection is isomorphic to $\Torus_i$.
The cone $\sigma$ defines a subtorus $\Torus_{\sigma}$ of $\Torus_i$ and correspondingly a subbundle
$\calT_{\sigma}$ of $\calT_i$, with  $\calT(\sigma)=\calT_i/\calT_{\sigma}$.

For later use, it is also useful to consider the torus bundle $\calT_i^\vee\rightarrow\ua[g-i]^{\times i}$, whose fiber is the torus $\Torus_i^\vee=\Sym^2(\ZZ^i)^\vee\otimes\CC^*$ dual to $\Torus_i$.

\begin{prop}\label{p:identification}
For $1\leq j\leq k\leq i$, let us define line bundles $\calS_{jk}$ on $\ua[g-i]^{\times i}$ by setting $\calS_{jk}=p_{jk}^*(P^{-1})\otimes L^{-1/2}$ if $j<k$ and $\calS_{jk}=p_j^*(\Theta^{-2})\otimes L^{-1}$ if $j=k$, where $L$ denotes the pull-back of the line bundle of modular forms on $\ab[g-i]$.
Then the torus bundle $\calT_i^\vee\rightarrow\ua[g-i]^{\times i}$ is isomorphic to the fiber product of the $\CC^*$-bundles $\calS_{jk}^0$ obtained by removing the $0$-section from the $\calS_{jk}$.
\end{prop}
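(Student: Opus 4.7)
The plan is to use the natural decomposition of the torus $T_i^*$ as a product of $\CC^*$ factors indexed by a basis of its character lattice, and then identify each factor line bundle by computing cocycles coming from the action of the Jacobi group.

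First I would fix the basis $\{e_j\cdot e_k : 1\le j\le k\le i\}$ of $\Sym^2(\ZZ^i)$, which is the character lattice of $T_i^*$. Each basis vector gives a character $\chi_{jk}:T_i^*\to\CC^*$, and these characters split the torus as $T_i^*\cong \prod_{j\le k}\CC^*$. Since the torus bundle $\calT_i^*\to\ua[g-i]^{\times i}$ is principal for $T_i^*$, pushing out along the $\chi_{jk}$ realizes it as the fiber product (over $\ua[g-i]^{\times i}$) of $\CC^*$-bundles $\calL_{jk}^0$, where $\calL_{jk}$ is a line bundle on $\ua[g-i]^{\times i}$. The task is therefore to identify $\calL_{jk}$ with the line bundle $\calS_{jk}$ in the statement.

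Next I would compute transition functions. Recalling the description in the previous section, $\calT_i\to \ua[g-i]^{\times i}$ is obtained by quotienting $T_i\times\CC^{i(g-i)}\times\HH_{g-i}$ by the Jacobi group generated by matrices of type $g_2,g_3$. The action on $T_i$ (and dually on $T_i^*$) under the generators $g_3$ of~\eqref{g3} involves a shift by $\exp\bigl(2\pi i\,{}^tM\tau M+\cdots\bigr)$ in the fiber coordinates; the component corresponding to the basis vector $e_j\cdot e_k$ picks up precisely the cocycle of the Poincar\'e line bundle $p_{jk}^*P^{-1}$ (for $j<k$) or its diagonal specialization $p_j^*\Theta^{-2}$ (for $j=k$). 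The action under the generators $g_2$ twists by the automorphy factor $\det(C\tau+D)$, which contributes the required power of the pullback $L$ of the line bundle of weight-$1$ modular forms: the exponent is $-1/2$ in the off-diagonal case and $-1$ in the diagonal case, since $P$ is a symmetric line bundle of weight one half with respect to each factor and $\Theta$ has weight one half. Combining these two types of cocycles gives $\calL_{jk}=p_{jk}^*(P^{-1})\otimes L^{-1/2}=\calS_{jk}$ for $j<k$ and $\calL_{jk}=p_j^*(\Theta^{-2})\otimes L^{-1}=\calS_{jk}$ for $j=k$.

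Having identified the associated line bundles, I would conclude by observing that removing the zero section from each $\calS_{jk}$ yields the $\CC^*$-bundle $\calS_{jk}^0$, and the fiber product of these $\CC^*$-bundles is by construction isomorphic to the principal $T_i^*$-bundle $\calT_i^*$.

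The main obstacle I anticipate is the careful bookkeeping of the cocycles, specifically tracking the half-integral weights that come from the square root $L^{-1/2}$. This requires using a symmetric normalization of the Poincar\'e bundle (trivialized along both zero sections) and the universal theta divisor trivialized along the zero section, and then matching the automorphy factors coming from $g_2$ with the standard transformation law for theta functions under $\Sp(2(g-i),\ZZ)$. A sanity check is provided by dimension count and by the diagonal restriction $j=k$: the relation $P|_{\Delta}=2\Theta$ (where $\Delta\subset\ua[g-i]^{\times 2}$ is the diagonal) is consistent with the formulas $\calS_{jj}=p_j^*(\Theta^{-2})\otimes L^{-1}$ and $\calS_{jk}=p_{jk}^*(P^{-1})\otimes L^{-1/2}$, confirming that the choice of factors of $L$ is the only one compatible with the combinatorics.
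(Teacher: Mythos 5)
Your proposal is correct and follows essentially the same route as the paper: both identify the fiber coordinates of $\calT_i^*$ (the exponentials $s_{jk}=e^{2\pi\sqrt{-1}\tau_{jk}}$ of the dual basis, i.e.\ the characters of the torus) and compute their transformation under the Jacobi group generators $g_3$, matching the resulting cocycle with the transformation law of theta functions to recognize $p_j^*(\Theta^{-2})\otimes L^{-1}$ on the diagonal and $p_{jk}^*(P^{-1})\otimes L^{-1/2}$ off it. Your explicit separation of the $g_2$ automorphy factor (accounting for the powers of $L$) from the $g_3$ shift cocycle, and the consistency check via restriction to the diagonal, are exactly the bookkeeping the paper carries out.
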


\begin{proof}
Let us denote by $\{\tau_{r,s}\}_{1\leq r\leq s\leq i}$ the basis of $\Sym^2(\RR^i)^\vee$ dual to the basis $\{x_rx_s+x_sx_r\}_{1\leq r\leq s\leq i}$ and by $M_i$ the lattice generated by the $\tau_{r,s}$. (We will later use the notation $M$ for this lattice, when there is no longer a danger that this might be confused  with the matrices $M$, which appear later 
in this proof and whose notation has also become standard in the literature). 
Then  $s_{jk}:=e^{2\pi\;\sqrt{-1}\tau_{jk}}$ define coordinates on
 the fiber $M_i\otimes\CC^*\cong(\CC^*)^{\frac{i(i+1)}2}$ of $\calT_i^\vee$. Next, we look at the transformation law for the $s_{jk}$ under the subgroup $(\ZZ^{2(g-i)i})\cong \Mat_\ZZ(i,g-i)^{\oplus 2}$ of the Jacobi group generated by transformations \eqref{g3} of type $g_3$. For all $1\leq j\leq k\leq i$ and $M,N \in \Mat_\ZZ(i,g-i)$, one gets
\begin{multline}\label{transform}
(M,N)s_{jk}\\
=e^{2\pi\sqrt{-1}\bigl(\sum\limits_{l=1}^{g-i}(m_{jl}\tau_{k,g-i+l}+m_{kl}\tau_{j,g-i+l})+\sum\limits_{\alpha,\beta=1}^{g-i}m_{j\alpha}m_{k\beta}\tau_{g-i+\alpha,g-i+\beta}\bigr)}s_{jk}.
\end{multline}
In particular, the matrix $N$ acts trivially on $s_{jk}$.

To prove the claim, it suffices to show that $s_{jk}$ is a local section of $\calS_{jk}$ for all $1\leq j\leq k\leq i$.
If $j=k$ holds, this transformation behavior agrees with that of $\theta(\Omega,Z)^{-2}$ for $\Omega=(\tau_{g-i+\alpha,g-i+\beta})\in \HH_{g-i}$ and $Z=(\tau_{k,g-i+1},\dots,\tau_{k,g})\in\ZZ^i$, hence $s_{jj}$ is a local section of the bundle $p_j^*(\Theta\otimes L^{1/2})$.

For $j<k$ one sees from \eqref{transform} that $\calS_{jk}$ is the pull-back of the line bundle $\calS_{12}'$, which is defined
analogously to $\calS_{j,k},$ in the special case $j=1, k=2$ and $i=2$,
over $\ua[g-i]\times_{\ab[g-i]}\ua[g-i]$.
Again, comparing with the transformation behavior of the theta function yields that the restriction of $\calS_{12}'$ to the fiber $A\times A$ of $\ua[g-i]^{\times 2}$ is the inverse of the Poincar\'e bundle twisted by $L^{1/2}$. Indeed, for every $[A]\in\ab[g-i]$,  the restriction of $\calS_{12}'$ to $\{0\}\times A$ and $A\times \{0\}$ is trivial, whereas the restriction of $\calS_{12}'$ to the diagonal $\ua[g-i]\hookrightarrow \ua[g-i]^{\times 2}$ gives $(\Theta\otimes L^{1/2})^{-2}$.
 \end{proof}

Finally the parabolic subgroup $P(U)$ contains elements of the form
$$
g_4=
\begin{pmatrix}
{}^tQ^{-1}& 0 & 0 & 0\\
0 & {\bf 1}_{g-i} & 0 & 0\\
0 & 0 & Q & 0\\
0 & 0 & 0 & {\bf 1}_{g-i}
\end{pmatrix}, \text{ where }
Q \in \GL(i,\ZZ).
$$
In order to obtain $\stratum{}{\sigma}$ from $\calT(\sigma)$ we consider all $Q$ such that
the action on the space $\Sym^2(\ZZ^i)$ given by
$$
\GL(i,\ZZ) \ni Q:\ X\mapsto {}^tQ^{-1}XQ^{-1}.
$$
maps the cone $\sigma$ to itself. This gives us a finite group $G(\sigma)$ and  $\stratum{}{\sigma}= G(\sigma)  \backslash \calT(\sigma)$.
At this point we would also like to point out that the Jacobi group is a normal subgroup of $P(U)/P'(U)$ and that elements
of the form $g_2$ and $g_4$ commute. We will use these facts without mentioning them explicitly when first dealing with the cohomology of $\calT(\sigma)$ and only then taking the part invariant under $G(\sigma)$.

To compute the cohomology of one such stratum $\stratum{}{\sigma}$, one uses two Leray spectral sequences, namely those for
the torus bundle $q(\sigma)$ and for $\pi(\sigma): \calT(\sigma)\rightarrow\ab[g-i]$.
Indeed, in a first step, for $[A] \in \ab[g-i]$ the cohomology of $\pi(\sigma)^{-1}([A])$ can be computed from the Leray spectral sequence of the torus bundle restricted to the fiber $p_i ^{-1}([A])\cong A^i$ of the universal family $\ua[g-i]^{\times i}$:
\begin{equation}\label{lerayfiber}
E_2^{p,q}(q(\sigma))=H^q(\Torus_i/\Torus_{\sigma})\otimes H^p(A^{i})
\Longrightarrow H^{p+q}\big(\pi(\sigma)^{-1}([A])\big).
\end{equation}
Since the group $G(\sigma)$ acts fiberwise, one can then compute the $G(\sigma)$-invariant part
$\tilde E_2^{p,q}(q(\sigma))$
of the cohomology
of the fibers of $\pi(\sigma)$.
Varying the fiber over $\ab[g-i]$, one thus
obtains a direct sum of local systems on $\ab[g-i]$.
In the second step of the argument we then use the Leray spectral sequence for the map $\pi(\sigma)$.
To write $E_2^{p,q}(\pi(\sigma))=\cohloc q {\ab[g-i]}{R^p\pi(\sigma)_*\QQ}$, we consider $E_2^{p,q}(q(\sigma))$ as  giving rise to a spectral sequence of local systems converging to $R^p\pi(\sigma)_*\QQ$.
Provided one can control the differentials of the spectral sequence and one knows the cohomology
of the local systems, from the $G(\sigma)$-invariant terms $\tilde E_2^{p,q}(q(\sigma))$ one can thus compute the cohomology of the stratum $\stratum{}{\sigma}$.

Our aim is to compute stable cohomology. This simplifies the situation considerably since stable cohomology only comes from
trivial local systems.  In other words we only have to take into consideration the part $\hat E_2^{p,q}(q(\sigma))$  of $\tilde E_2^{p,q}(q(\sigma))$ which
gives rise to trivial local systems $\VV_0$ on  $\ab[g-i]$. This allows us to work with a smaller spectral sequence, which still converges to the stable cohomology of $\stratum{}{\sigma}$. Moreover, we will be able to argue that in the stable range not only the cohomology of $\ua^{\times i}$ but also that of every open torus bundle is zero in any odd degree. This will drastically simplify dealing with the spectral sequences.

\section{Stable cohomology of strata}\label{sec:stabletorusbundles}
The aim of this section is to prove a stabilization result for the cohomology of the strata $\stratum{}\sigma$ of toroidal compactifications of $\ab$.
This kind of stabilization  occurs independently of the choice of the compactification.
To set up notation, let us assume that we have an admissible collection $\ssigma=\{\Sigma_g\}_{g\in\NN}$ of admissible fans $\Sigma_g$ in $\Sym^2_{\operatorname{rc}}(\RR^g)$ or in a $\GL(g,\ZZ)$-invariant open subset of $\Sym^2_{\operatorname{rc}}(\RR^g)$.
This means that for all $g<g'$,
the intersection of $\Sigma_{g'}$ with $\Sigma_{g}$  coincides with $\Sigma_g$, where we identify $\RR^g$ with the subspace of $\RR^{g'}$ generated by
the first $g$ coordinate vectors.
Then for each $g$, we define  $\Asigma$ to be the (possibly partial) toroidal compactification of $\ab$ defined by this admissible collection of fans.

Let $\sigma$ be a rank $i$ cone of dimension $\ell$ in $\ssigma$. Then $i\leq \ell$ and $\sigma$ defines a stratum $\stratum{g}{\sigma}\subset\Asigma$ for any genus $g \geq i$.
This  is the quotient of a torus bundle $\calT_g(\sigma)$ over
$\ua[g-i]^{\times i}$ by a finite group
$G(\sigma)$.  The rank of the torus fiber is $i(i+1)/2-\ell$.
The group $G(\sigma)$ and the fiber of the torus bundle do not depend on  $g$, but  $\stratum{g}{\sigma}$ itself does.

In what follows we must be very careful as to which space we are working in, and will thus keep the subscript $g$ everywhere.
\begin{thm} \label{thm:stablestrata}
For a given cone $\sigma$ the cohomology groups $H^k(\calT_g(\sigma))$ and $H^k(\stratum g\sigma)$ stabilize for $k<g-i-1$.

Moreover, the cohomology in this range is algebraic, and explicitly one has
$$
H_\stab^\pu(\calT_g(\sigma))\cong\QQ[\lambda_1,\lambda_3,\dots]\otimes \Sym^\pu (\Span(\sigma)\cap\Sym^2(\QQ^i)),
$$
where the generators of $\Span(\sigma)$ have degree $2$.
The stable cohomology of $\stratum{}\sigma$ is the invariant part of the stable cohomology of $\calT_g(\sigma)$ under the natural action of the stabilizer $G(\sigma)$ of the cone $\sigma$ in $\GL(i,\ZZ)$.
\end{thm}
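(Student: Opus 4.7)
The plan is to reduce to computing the stable cohomology of the torus bundle $\calT_g(\sigma)$, since $\stratum g\sigma=G(\sigma)\backslash\calT_g(\sigma)$ is a quotient by a finite group and with rational coefficients one has $H^\pu(\stratum g\sigma,\QQ)=H^\pu(\calT_g(\sigma),\QQ)^{G(\sigma)}$. The interesting content is therefore the computation of the stable cohomology of $\calT_g(\sigma)$, which I would carry out via the Leray spectral sequence associated to $q(\sigma)\colon\calT_g(\sigma)\to\ua[g-i]^{\times i}$. The stable cohomology of the base is the polynomial algebra $\QQ[\lambda_1,\lambda_3,\ldots]\otimes\QQ[T_1,\ldots,T_i,P_{jk}]$ by Theorem~\ref{thm:Xgn}, and since $\calT_g(\sigma)$ is a principal $T_i/T_\sigma$-bundle the monodromy on the fiber cohomology is trivial, so
$$E_2^{p,q}=H^p(\ua[g-i]^{\times i},\QQ)\otimes\bigwedge^q W,\qquad W:=H^1(T_i/T_\sigma,\QQ),$$
with $\dim W=\tfrac{i(i+1)}{2}-\ell$.

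Next I would identify the transgression $d_2\colon W=E_2^{0,1}\to E_2^{2,0}=H^2(\ua[g-i]^{\times i},\QQ)$, which sends a character $\chi\in X^*(T_i/T_\sigma)\subset\Sym^2(\QQ^i)^\vee$ to the first Chern class $c_1(L_\chi)$ of the associated line bundle on the base. By Proposition~\ref{p:identification} these line bundles are explicit tensor products of the $\calS_{jk}$ shifted by powers of the Hodge line bundle, so a basis of $W$ maps to $\tfrac{i(i+1)}{2}-\ell$ linearly independent combinations of the $T_j$, $P_{jk}$ (and $\lambda_1$). These classes span a complement to a subspace corresponding to $\Span(\sigma)\cap\Sym^2(\QQ^i)$ inside the degree-$2$ stable cohomology. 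In the polynomial stable cohomology ring, linearly independent linear forms constitute a regular sequence, so the Koszul complex $(\bigwedge^\pu W\otimes H^\pu(\ua[g-i]^{\times i}),d_2)$ is a resolution of the quotient, and the $E_3$-page is concentrated on the row $q=0$, equal to $H^\pu(\ua[g-i]^{\times i})/(c_1(L_\chi):\chi\in X^*(T_i/T_\sigma))$. Higher differentials $d_r$ with $r\ge 3$ vanish on this row for bidegree reasons, so the spectral sequence degenerates at $E_3$.

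Computing the quotient ring then gives a polynomial algebra generated by the $\lambda_{2j+1}$ together with $\ell$ additional degree-$2$ algebraic classes parametrized by a basis of $\Span(\sigma)\cap\Sym^2(\QQ^i)$, proving the stated isomorphism
$$H_\stab^\pu(\calT_g(\sigma))\cong\QQ[\lambda_1,\lambda_3,\ldots]\otimes\Sym^\pu(\Span(\sigma)\cap\Sym^2(\QQ^i)).$$
For $\stratum g\sigma$ one passes to $G(\sigma)$-invariants: the action of $G(\sigma)\subset\GL(i,\ZZ)$ on $\Sym^2(\QQ^i)$ by $M\mapsto{}^tQ^{-1}MQ^{-1}$ preserves $\Span(\sigma)$ by definition, and this is compatible with all of the above constructions, so the entire computation descends to $\stratum g\sigma$.

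The main technical obstacle is verifying that the transgression classes really do form a regular sequence, i.e.\ that they are linearly independent in $H^2$ once the $\lambda_1$-shifts from Proposition~\ref{p:identification} are correctly accounted for; this is essentially a combinatorial check using the explicit form of the $\calS_{jk}$. A secondary issue is tracking the stable range carefully: for $H^k(\calT_g(\sigma))$ to stabilize one needs all $E_2^{p,q}$ with $p+q=k$ to have stabilized, which via Theorem~\ref{thm:Xgn} requires $p<g-i$ for all relevant $p\le k$, and accounting for the room needed for higher differentials to act entirely within the stable range yields the stated bound $k<g-i-1$.
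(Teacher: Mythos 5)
Your proposal is correct, and its core mechanism is the same as the paper's: the transgression of the torus bundle is identified with the Euler classes of the line bundles coming from Proposition~\ref{p:identification}, and exactness of the resulting Koszul complex collapses the spectral sequence onto the row $q=0$, giving $\Sym^\pu$ of the span of $\sigma$ as an algebra over $\QQ[\lambda_1,\lambda_3,\dots]$. The one organizational difference is that you run a single Leray spectral sequence for $q(\sigma)\colon\calT_g(\sigma)\to\ua[g-i]^{\times i}$ and quotient the full stable ring $\QQ[\lambda_1,\lambda_3,\dots][T_j,P_{jk}]$ by the Euler classes, whereas the paper pushes down all the way to $\ab[g-i]$ first (so that the Borel--Hain vanishing of nontrivial local systems reduces everything to the $\Sp$-invariant cohomology of the fiber) and then runs the torus-bundle Koszul argument fiberwise over a single abelian variety (Lemma~\ref{l:cohpsia}). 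The paper's ordering has the small advantage that the Hodge line bundle $L$ is trivial on the fibers of $p_i$, so the Euler classes restrict to pure linear forms in the $T_j,P_{jk}$ and the $\lambda_1$-shifts you flag as the ``main technical obstacle'' simply never appear; in your version they are harmless (adding multiples of the algebraically independent variable $\lambda_1$ to linearly independent linear forms in the $T_j,P_{jk}$ keeps them a regular sequence, and the quotient remains free on a complement of the span of the Euler classes), but this is precisely the check you would have to write out. The only substantive point you leave implicit is the construction of the stabilization maps themselves, $\calT_g(\sigma)\to\calT_{g'}(\sigma)$ compatible with the fibrations and with the $G(\sigma)$-action, which the paper records explicitly and which is needed before ``stabilizes'' has a meaning.
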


\begin{rem}\label{rem:classes}
More precisely, there is an isomorphism
$$H_\stab^\pu(\calT_g(\sigma))\cong\QQ[\tau_{rs},\lambda_{2m+1}|1\leq r\leq s\leq i, m\in\ZZ]/(\sigma^\perp\cap M)$$
where $\{\tau_{rs}\}$ denotes the basis of $M=\Sym^2(\ZZ^i)^\vee$ dual to $\{x_rx_s+x_sx_r\}$ and $\sigma^\perp=\{\phi\in\Sym^2(\RR^i)^\vee| \phi(\xi)=0\  \forall \xi\in\sigma\}$ is the orthogonal complement of $\sigma$.

One should interpret this isomorphism as a description of
the stable cohomology of $\calT_g(\sigma)$ as a quotient of the stable cohomology of $\ua[g-i]^{\times i}$, by identifying $\tau_{rs}$ with the class $-2T_r\in H_\stab^2(\ua[g-i]^{\times i})$ if $r=s$ and with the class $-P_{rs}\in H_\stab^2(\ua[g-i]^{\times i})$ if $r\neq s$.
\end{rem}
\begin{proof}
Let us recall that for $g<g'$ the stabilization map on the moduli spaces of abelian varieties is induced by the map $\ab\rightarrow \ab[g']$ given by mapping $[A]\in\ab$ to $[A\times B]\in\ab[g']$, where $B$ is a fixed abelian variety of dimension $g-g'$. (Different choices of $B$ lead to the same map in cohomology). For the universal family, the stabilization map is given by mapping $(A,x)\in\ua$ to $(A\times B,x\times 0_B)$, where $0_B\in B$ denotes the identity element of $B$.
These stability maps can be lifted to $\stratum g\sigma$ to obtain the following commutative diagram
 \begin{equation}\label{diagram-beta}
\xymatrix@C=66pt
{
\calT_g(\sigma)\ar[r]\ar[d]^{q(\sigma)}\ar@/_3ex/[dd]_{\pi(\sigma)}&{\calT_{g'}}(\sigma)\ar[d]_{q(\sigma)}\ar@/^3ex/[dd]^{\pi(\sigma)}\\
\ua[g-i]^{\times i}\ar[r]
\ar[d]^{p_i}
&{\ua[g'-i]^{\times i}}\ar[d]_{p_i}\\
{\ab[g-i]}\ar[r]&{\ab[g'-i]},
}
\end{equation}
where the map $\calT_g(\sigma)\rightarrow \calT_{g'}(\sigma)$ is well-defined due to the fact that the fibers of the maps $q(\sigma)$ are independent of the genus $g$. The finite automorphism group $G(\sigma)$, being the stabilizer of $\sigma$ in its $\QQ$-span, which is isomorphic to
$\Sym^2_{\geq 0}(\QQ^i)$, does not depend on $g$ and acts equivariantly on the diagram, thus defining the stabilization map $\stratum g\sigma\rightarrow \stratum {g'}{\sigma}$.

Let us consider the Leray spectral sequence associated with $\pi(\sigma)$, with $E_2$ term
$
E^{p,q}_2=\cohloc p{\ab}{R^q\pi(\sigma)_*\QQ}.
$
By the stability Theorem \ref{thm:stable}, in the stable range $p<g-i$ over $\ab[g-i]$ the cohomology of the non-trivial symplectic local systems $\VV_{\ud\mu}$ vanishes, hence $E_2^{p,q}$
only depends on the trivial summands of the local system  $R^q\pi(\sigma)_*\QQ$, or, equivalently,
by the part of the cohomology of the fiber of $\pi(\sigma)$ that is invariant under the action of the symplectic group.
In Lemma~\ref{l:cohpsia} below we will show that this cohomology
stabilizes in degree $p< g-i$ and that in this range it is isomorphic to the truncation of the symmetric algebra of a $\QQ$-vector space $M_\QQ/W_\QQ$ which is isomorphic to the $\QQ$-span of the extremal rays of the cone $\sigma$.

In particular, the stabilization map induces an isomorphism between the $E_2$ terms in the range $p,q<g-i$ of the Leray spectral sequences associated with $\pi(\sigma)$ for $g$ and for $g'$.
Next, we observe that the generators of $M_\QQ/W_\QQ$ represent algebraic classes in the cohomology, hence the $E^{p,q}_2$ with $p,q<g-i$ vanish if $p+q$ is odd and carry Tate Hodge-structures of weight $(p+q)/2$ if $p+q$ is even.
The vanishing for odd $p+q$  implies that all differentials $d_r:\;E^{p,q}_r\rightarrow E^{p+r,q-r+1}_r$ with $p+r<g-i$ and $q<g-i$ are zero, so that $E_2=E_\infty$ holds. In particular, this is the case for $p+q<g-i-1$, and therefore for $k<g-i-1$ we have
$$
\coh[k]{\calT_{g'}(\sigma)}=\coh[k]{\calT_g(\sigma)}=\bigoplus_{p+q=k}\cohloc p{\ab[g-i]}{R^q\pi(\sigma)_*\QQ},
$$
which in view of Lemma \ref{l:cohpsia} is $0$ for odd $k$ and isomorphic to
$$
\bigoplus_{p'+q'=k/2}\coh[2p']{\ab[g-i]}\otimes\Sym^{q'}(M_\QQ/W_\QQ)$$
for even $k$.

This proves the claim for the cohomology of $\calT_{g}(\sigma)$.

The strata $\stratum g\sigma$ and $\stratum{g'}\sigma$ are the quotient of $\calT_g(\sigma)$, respectively, of $\calT_{g'}(\sigma)$ by the finite group $G(\sigma)$. Therefore, the  cohomology of $\stratum g\sigma$ (resp., $\stratum {g'}\sigma$) is the $G(\sigma)$ invariant part of the cohomology of $\calT_g(\sigma)$ (resp., of $\calT_{g'}(\sigma)$).
Since the diagram \eqref{diagram-beta} is $G(\sigma)$-equivariant,  the cohomology of $\stratum g\sigma$ stabilizes in the same range as that for $\calT_g(\sigma)$. The explicit description of the stable cohomology and its algebraicity come from taking the $G(\sigma)$-invariant part in the description of the stable cohomology of $\calT_g(\sigma)$.
The proof is thus completed by proving the following lemma.
\end{proof}
\begin{lm}\label{l:cohpsia}
In degree less than $g-i$, the $\Sp(2g-2i)$-invariant part of the cohomology of  the fiber $\Psi_{g,A}=\pi(\sigma)^{-1}([A])\subset\calT_g(\sigma)$ is algebraic and independent of $g$. In particular, the cohomology vanishes in odd degree.

Furthermore, if we denote  the basis of $\Sym^2(\RR^i)^\vee$ dual to the basis $\{x_rx_s+x_sx_r\}_{1\leq r\leq s\leq i}$ by $\{\tau_{r,s}\}_{1\leq r\leq s\leq i}$,  the lattice generated by the $\tau_{r,s}$ by $M$ and the intersection $\sigma^\perp\cap M$ by $W$, in even degree $k<g-i$ the cohomology of $\Psi_A$ is isomorphic to the degree $k/2$ part of the  symmetric algebra generated by the quotient $M_\QQ/W_\QQ$ with $M_\QQ=M\otimes_\ZZ \QQ$, $W_\QQ = W\otimes_\ZZ \QQ$.
\end{lm}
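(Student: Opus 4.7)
The plan is to compute the $\Sp(2g-2i)$-invariant part of the cohomology of $\Psi_{g,A}$ via the Leray spectral sequence \eqref{lerayfibre} associated with the torus bundle $q(\sigma)|_{A^i} : \Psi_{g,A} \to A^i$, whose fiber is $T_i/T(\sigma)$.

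First I would identify the cohomology of the fiber: since $T(\sigma)=(\Span(\sigma)\cap\Sym^2(\ZZ^i))\otimes\CC^*$, the character lattice of $T_i/T(\sigma)$ is the annihilator $\sigma^\perp\cap M = W$, so $H^\bullet(T_i/T(\sigma),\QQ)=\bigwedge^\bullet W_\QQ$ and $E_2^{p,q}=H^p(A^i)\otimes\bigwedge^q W_\QQ$. Next, I would pass to $\Sp(2g-2i)$-invariants: since the symplectic action is trivial on the torus fibers, $(E_2^{p,q})^{\Sp}=H^p(A^i)^{\Sp}\otimes\bigwedge^q W_\QQ$. By the results of Thompson already invoked in the proof of Theorem~\ref{thm:Xgn}, in the range $p<g-i$ the invariant part $H^p(A^i)^{\Sp}$ vanishes for odd $p$ and equals $\Sym^{p/2}(M_\QQ)$ for even $p$, where $M_\QQ$ is spanned by the classes $T_r,P_{rs}$ under the correspondence of Remark~\ref{rem:classes}.

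The main computation is then the transgression differential $d_2:W_\QQ=(E_2^{0,1})^{\Sp}\to M_\QQ=(E_2^{2,0})^{\Sp}$. Using Proposition~\ref{p:identification}, which exhibits $\calT_i^*$ as a fibre product of $\CC^*$-bundles $\calS_{jk}^0$ whose Chern classes, modulo $\lambda$-classes pulled back from $\ab[g-i]$, are $-P_{jk}$ for $j<k$ and $-2T_j$ for $j=k$, I would identify this transgression as the natural inclusion $W\hookrightarrow M$ sending each $\tau_{jk}\in W$ to its image in $M_\QQ\subset H^2(A^i)^{\Sp}$. By multiplicativity of the Leray spectral sequence, the full $d_2$-differential on $(E_2^{\bullet,\bullet})^{\Sp}=\Sym^\bullet(M_\QQ)\otimes\bigwedge^\bullet W_\QQ$ becomes the Koszul differential associated with the inclusion $W\hookrightarrow M$.

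Finally, I would invoke the Koszul resolution: the complex $\Sym^\bullet(M_\QQ)\otimes\bigwedge^\bullet W_\QQ$ with Koszul differential is a free resolution of $\Sym^\bullet(M_\QQ/W_\QQ)$, so its cohomology is concentrated in the row $q=0$ and equals $\Sym^\bullet(M_\QQ/W_\QQ)$. Thus $(E_3^{p,q})^{\Sp}=0$ for $q>0$ and $(E_3^{2l,0})^{\Sp}=\Sym^l(M_\QQ/W_\QQ)$. Higher differentials vanish for degree reasons, so $E_\infty=E_3$, yielding the claim; algebraicity and independence of $g$ follow at once from the corresponding features of the classes $T_r$, $P_{rs}$ and of $W$. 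The main obstacle is step three, namely pinning down the transgression precisely as the inclusion $W\hookrightarrow M$, including sign conventions and checking that $\lambda$-class corrections do not affect the Koszul identification in the stable range.
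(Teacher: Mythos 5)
Your proposal follows essentially the same route as the paper's proof: the Leray spectral sequence of the torus bundle $\Psi_{g,A}\to A^i$, identification of the fiber cohomology with $\bigwedge^\pu W_\QQ$, Thompson's theorem for the $\Sp$-invariants of $H^\pu(A^i)$, identification of the $d_2$-transgression with the inclusion $W_\QQ\hookrightarrow M_\QQ$ via the Euler classes of the $\CC^*$-bundles from Proposition~\ref{p:identification}, and exactness of the Koszul complex to conclude. The step you flag as the main obstacle is handled in the paper exactly as you propose, by restricting the Chern classes $c_1(\calS_{jk})$ to the fiber $A^i$, where the $\lambda$-contributions vanish.
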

\begin{proof}
To proceed, we want to describe more precisely the torus bundle $\calT_g(\sigma)$.
We recall that its fiber is the torus $\Torus_i/\Torus_{\sigma}$,
where $\Torus_i= \Sym^2(\ZZ^i) \otimes \CC^*$ and $\Torus_{\sigma} \subset \Torus_i$ is given by $\Torus_{\sigma} = (\Span (\sigma) \cap \Sym^2(\ZZ^i)) \otimes \CC^*$. Thus duality defines a canonical isomorphism between the quotient $\Torus_i/\Torus_{\sigma}$
and the torus
$\Torus_{\sigma^\perp}=(\sigma^\perp\cap M)\otimes \CC^*$.
This enables us to view $\calT_g(\sigma)$ as a subbundle of the dual torus bundle $\calT_i^\vee$ of Proposition~\ref{p:identification}.

Let us choose a $\ZZ$-basis $\phi_1,\dots,\phi_m$  ($m=i(i+1)/2$) of $W:=\sigma^\perp\cap M$ and write  $\phi_j=\sum_{1\leq k_1\leq k_2\leq i} \alpha_{j,k_1,k_2}\tau_{k_1,k_2}$.
Then setting $w_j:=e^{2\pi\sqrt{-1}\phi_j}=\prod_{1\leq k_1\leq k_2\leq i} s_{k_1,k_2}^{\alpha_{j,k_1,k_2}}$ for $j=1,\dots,m$ defines a set of parameters for the fiber
 $\Torus_{\sigma^\perp}$ of the torus bundle $\calT_g(\sigma)$.
Note that by Proposition \ref{p:identification} each $w_j$ defines a local section of the bundle $L_j:=\otimes \calS_{k_1k_2}^{\alpha_{j,k_1,k_2}}$ over $\ua[g-i]^{\times i}$.
Hence, if we denote  the complement of the $0$-section by $L_j^0$, the torus bundle $\calT_g(\sigma)$ is contained in the direct sum of the $L_j$ as
$$\calT_g(\sigma) = L_1^0\times_{\ua[g-i]^{\times i}} \dots\times_{\ua[g-i]^{\times i}} L_m^0.$$

Now, let us consider  $\Psi_{g,A}$, which by definition  is the restriction to $A^{\times i}\cong p_i^{-1}([A])$ of the torus bundle $\calT_g(\sigma)$.
The stabilization map commutes with $\pi(\sigma)$, hence its restriction to $A^i$ induces a map $\Psi_{g,A}\rightarrow \Psi_{g',A'}$, where $A'=A\times B$ is given by the image of the point $[A]\in\ab[g-i]$ under the stabilization map $\ab[g-i]\rightarrow\ab[g'-i]$.
To study the cohomology of $\Psi_{g,A}$
and $\Psi_{g',A}$,
we use the  Leray spectral sequence of the torus bundle  $\Psi_{g,A}\rightarrow A^{i}$  and $\Psi_{g',A'}\rightarrow {A'}^{i}$, which we denote by $E^{p,q}_\pu$ and $E'_\pu{}^{p,q}$, respectively.
The $E_2$-terms are of the form
$$E_2^{p,q}=\coh[q]{\Torus_{\sigma^\perp}}\otimes\coh[p]{A^{i}}
=
\bigwedge^q W_{\QQ}\otimes\bigwedge^p{\coh[1]A^{ i}},$$
where we used the isomorphism $W_\QQ \cong\coh[1]{\Torus_{\sigma^\perp}}$.
Let us denote by $E_{r,\inv}^{p,q}$ the $\Sp(2g-2i)$-invariant part of $E_r^{p,q}$, and by $E'_{r,\inv}{}^{p,q}$ the $\Sp(2g'-2i)$-invariant part of $E'_r{}^{p,q}$. Then the description of $E_2$ given above, combined with Theorem \ref{thm:Xgn}, implies that the stabilization map $\Psi_{g,A}\rightarrow\Psi_{g',A'}$ induces an isomorphism $E_{2,\inv}^{p,q}\cong E_{2,\inv}^{p,q}$ if $p\leq g-i$.
Furthermore, for $p\leq g-i$ the term $E_{2,\inv}^{p,q}$ vanishes for $p$ odd and is given by
$$
E_{2,\inv}^{p,q}=\bigwedge^qW_\QQ \otimes \QQ[T_k,P_{k_1k_2}]_{p/2}
\cong \bigwedge^qW_\QQ \otimes \Sym^{p/2}M_\QQ
$$
for $p$ even. Here we identified the symmetric algebra $\Sym^rM_\QQ$ with $\QQ[T_k,P_{k_1k_2}]_{r}$ using the map $\tau_{kk}\mapsto -2T_k$, $\tau_{kk'}\mapsto -P_{kk'}$ for $k<k'$.

Furthermore, in view of the structure of $\calT_g(\sigma)$ as product of $\CC^*$-bundles, the Leray spectral sequence of $\calT_g(\sigma)$ degenerates at $E_3$, and the $d_2$-differentials are determined by the Euler classes of the $\CC^*$-bundles, i.e.~by the Chern class of the line bundles $L_1,\dots,L_m$.
By construction, and by the description of the bundles $\calS_{jk}$ given in Proposition \ref{p:identification}, one has $c_1(L_j)=-\sum_{1\leq k_1<k_2\leq i}\alpha_{j,k_1,k_2}P_{k_1k_2} -2\sum_{1\leq k\leq i}T_k$, where the coefficients of this linear combination are independent of $g$. In particular, in the stable range also the differentials in the spectral sequences $E_{\pu,\inv}$ and $E'_{\pu,\inv}$ coincide.

We can rephrase the description of $d_2$ in terms of multilinear algebra by saying that
$$
E_{2,\inv}^{2r,q}=\bigwedge^qW_\QQ \otimes \Sym^{r}M_\QQ
\xrightarrow{d_2}
E_{2,\inv}^{2r+2,q-1}=\bigwedge^{q-1}W_\QQ \otimes \Sym^{r+1}M_\QQ
$$
is the differential of the degree $r+q$ strand of the dual Koszul complex associated to the inclusion $W_\QQ\hookrightarrow M_\QQ$ of $\QQ$-vector spaces (see \cite[A2.6.1]{eisenbudbook}), provided both $E_{2,\inv}^{2r,q}$ and $E_{2,\inv}^{2r+2,q-1}$ are in the stable range. As this Koszul complex is exact, this immediately yields $E_{3,\inv}^{p,q}=0$ for $q\geq1$ in the range $p+q< g-i$, as well as
$$
E_{3,\inv}^{k,0}= \Sym^{k/2}M_\QQ/(\phi_1,\dots,\phi_m) = \Sym^{k/2}(M_\QQ/W_\QQ)
$$
for even $k<g-i$ and the vanishing of $E_{3,\inv}^{k,0}$ for odd $k<g-i$.
\end{proof}
\subsection{Standard cones}\label{subsec:standard}
We close this section by illustrating Theorem \ref{thm:stablestrata} in the concrete case of the $i$-dimensional standard cone
$$\sigma=\langle x_1^2,x_2^2,\dots,x_i^2\rangle.
$$
As the rank of $\sigma$ is equal to $i$, we get a commutative diagram
$$
\xymatrix@C=66pt
{
\calT(\sigma)\ar[r]^{(\CC^*)^m\text{-bundle}}
\ar[rd]^{\pi(\sigma)}
\ar[d]_{/G(\sigma)}
&{\ua[g-i]^{\times i}}\ar[d]\\
{\stratum g{\sigma}}\ar[r]_{\pi(\sigma)^{G(\sigma)}}&{\ab[g-i]}
}
$$
where the rank of the torus bundle is $m=\binom i2$.
To proceed, we describe more precisely the torus bundle $\calT(\sigma)$ and the group $G(\sigma)$. As explained in the previous section, the fiber of $\calT(\sigma)$ is given by the torus $W\otimes_\ZZ\CC^*$, where $W$ denotes the integral points of the orthogonal complement of $\sigma$.
As  $\sigma^\perp$ is spanned by $\tau_{jk}$ for all $1\leq j<k\leq i$, one has
$$
\sigma^\perp = \Span(-\tau_{12},\dots,-\tau_{i-1,i}).
$$
From Proposition \ref{p:identification} it follows that exponentiating the coordinate $-\tau_{jk}$ gives rise to a local section of the Poincar\'e bundle $P_{jk}$. Therefore, we have that $\calT(\sigma)$ is the fiber product of the Poincar\'e bundles $P_{jk}$ over $\ua[g-i]^{\times i}$ with the $0$-section removed.

The stabilizer $G(\sigma)$ of the standard cone in $\GL(i,\ZZ)$ is generated by sign changes and permutations of the coordinates $x_1,\dots,x_i$. In particular, its action on $\Span(\sigma)\cap \Sym^2(\QQ^i)$ factors through the action of the symmetric group $S_i$ permuting $x_1^2,\dots,x_i^2$. If we identify $\Span(\sigma)$ with the quotient of the dual space $\Sym^2(\RR^i)^\vee=\Span(\tau_{11},\tau_{12},\dots,\tau_{ii})$ by $\sigma^\perp=\Span(\tau_{jk}|\;j<k)$, we get the standard representation of $S_i$ on $\Span(\tau_{11},\dots,\tau_{ii})$.

From this it follows that we can identify the stable cohomology of the torus bundle $\calT(\sigma)$ with the quotient of the stable cohomology of $\ua[g-i]^{\times i}$ by the Euler classes $P_{12},\dots,P_{i-1,i}$ of the factors of $\calT(\sigma)$, or, equivalently, with the subalgebra of the stable cohomology of $\ua[g-i]^{\times i}$ generated by $T_1,\dots,T_i$. As the stable cohomology of $\stratum{}\sigma$ is the $S_i$-invariant part of the stable cohomology of $\calT(\sigma)$, we get the following result:
\begin{lm}\label{lm:standard}
For the standard cone $\sigma = \langle x_1^2,\dots,x_i^2\rangle$ the cohomology groups  $H^k(\stratum g\sigma)$ stabilize for $k<g-i-1$. The stable cohomology of $\stratum g\sigma$, as an algebra over the stable cohomology of $\ab[g-i]$, is freely generated
by classes $\epsilon_j\in H^{2j}(\stratum g\sigma)$ for $1\leq j\leq i$, where $\epsilon_j$ can be viewed as the degree $j$ symmetric polynomials in the classes $T_1,\dots,T_i\in H^2(\ua[g-i]^{\times i})$.
\end{lm}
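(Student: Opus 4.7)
The plan is to apply Theorem~\ref{thm:stablestrata} and then make the description in Remark~\ref{rem:classes} explicit for the standard cone. The stabilization range $k<g-i-1$ is immediate from Theorem~\ref{thm:stablestrata}, so the real content is the identification of the stable cohomology algebra.

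First I would compute $\sigma^\perp\cap M$ explicitly. Since $\sigma$ is spanned by $x_1^2,\ldots,x_i^2$, a dual vector $\tau_{rs}$ pairs nontrivially with $\sigma$ precisely when $r=s$. Hence $W_\QQ=\sigma^\perp\otimes\QQ$ is spanned by the $\tau_{jk}$ with $j<k$, and the quotient $M_\QQ/W_\QQ$ is freely spanned by the classes of $\tau_{11},\ldots,\tau_{ii}$. Under the identification in Remark~\ref{rem:classes} these are sent to $-2T_1,\ldots,-2T_i$. By Theorem~\ref{thm:stablestrata} this gives
\[
H^\pu_\stab(\calT_g(\sigma))\;\cong\;H^\pu_\stab(\ab[g-i])\otimes\QQ[T_1,\ldots,T_i],
\]
which is geometrically very reasonable: after quotienting by the Euler classes $P_{jk}$ of the Poincar\'e $\CC^*$-bundles making up $\calT_g(\sigma)$, one is left with the subalgebra of $H^\pu_\stab(\ua[g-i]^{\times i})$ generated by the $T_k$.

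Next I would determine the action of $G(\sigma)$. The stabilizer of the standard cone in $\GL(i,\ZZ)$ is the group of signed permutation matrices, of order $2^i\cdot i!$. Under $Q\mapsto{}^tQ^{-1}\,\cdot\,Q^{-1}$ acting on $\Sym^2(\QQ^i)$, sign changes fix every $x_k^2$ and permutations permute them, so the action on $\Span(\sigma)$ factors through $S_i$ acting on $\{x_1^2,\ldots,x_i^2\}$, and dually through $S_i$ permuting $T_1,\ldots,T_i$. The stable cohomology of $\stratum g\sigma$ is the $G(\sigma)$-invariant part of $H^\pu_\stab(\calT_g(\sigma))$; since $H^\pu_\stab(\ab[g-i])$ is acted on trivially, this reduces to computing the $S_i$-invariants of $\QQ[T_1,\ldots,T_i]$, which by the fundamental theorem of symmetric functions is freely generated by the elementary symmetric polynomials $\epsilon_1,\ldots,\epsilon_i$ in the $T_k$, with $\epsilon_j$ lying in degree $2j$.

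No step here is really a significant obstacle: all the analytic work has been absorbed into Theorem~\ref{thm:stablestrata}, and the remaining task is purely combinatorial. The only point to double-check carefully is that the sign-change part of $G(\sigma)$ indeed acts trivially on $\Span(\sigma)$ (and hence on the $T_k$), so that the invariants are genuinely the full ring of symmetric polynomials rather than a smaller subring; this is immediate from the formula for the $\GL(i,\ZZ)$-action on $\Sym^2(\QQ^i)$.
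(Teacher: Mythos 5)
Your proposal is correct and follows essentially the same route as the paper: the paper's Section on standard cones likewise computes $\sigma^\perp=\Span(\tau_{jk}\mid j<k)$, identifies $\calT(\sigma)$ as the fiber product of the Poincar\'e $\CC^*$-bundles, deduces that the stable cohomology of $\calT(\sigma)$ is the subalgebra of $H^\pu_\stab(\ua[g-i]^{\times i})$ generated by $T_1,\dots,T_i$, and observes that $G(\sigma)$ acts through $S_i$ permuting the $x_k^2$, so the invariants are the elementary symmetric polynomials. Your explicit check that the sign changes act trivially on $\Span(\sigma)$ is exactly the point the paper also relies on.
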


\section{Stabilization of cohomology of $\Perf$}\label{sec:stable}
We can  turn our attention to the (open) strata $\beta^0_i$ in the perfect cone compactification $\Perf$, which are disjoint unions of the strata described in the previous section. To compute their cohomology, we will use the Gysin exact sequence.
We note that each individual stratum $\stratum{}{\sigma}$ is a finite quotient of a smooth variety, namely the quotient of $\calT_g(\sigma)$ by $G(\sigma)$,
and hence Poincar\'e duality holds between cohomology and cohomology with compact support of complementary degree.
We would like to point out that this is no longer true for the strata $\beta^0_i$ themselves as they will, in general, be singular, and thus we will now want to work with cohomology with compact support, in close to top degree.

In the proof of Theorem \ref{thm:stablestrata}, we observed that the cohomology of $\stratum{}\sigma$ stabilizes with respect to a well-defined map $\stratum g\sigma\rightarrow\stratum{g'}\sigma$.
Clearly, these maps extend to a morphism $\Perf \rightarrow \Perf[g']$. This follows from the fact that in the toroidal construction, also the gluing of the strata $\stratum{}\sigma$ commutes with the stabilization morphisms induced by $\ab\rightarrow\ab[g']$. Hence, by restriction we also get maps $\Perf\supset\betazero{i,g}\rightarrow \betazero{i,g'}\subset \Perf[g']$ which induce the pullback maps $H^k(\betazero{i,g'})\rightarrow H^k(\betazero{i,g})$. However, if $\betazero{i,g}$ is singular, there is no natural associated map $H_c^{\topd-k}(\betazero{i,g'})\rightarrow H_c^{\topd-k}(\betazero{i,g})$, due to the fact that Poincar\'e duality may not hold for $\betazero{i,g}$.

\begin{rem}\label{rem:extensions}
As explained in the introduction, this is the first section the results of which do not apply to an arbitrary toroidal compactification. To simplify notation and statements, we will formulate everything for the perfect cone toroidal compactification, and make use of Proposition \ref{prop:basic}, which is specific to the perfect cone compactification. However, we would like to point out that in fact the results below hold in greater generality: indeed, property (ii) of Proposition \ref{prop:basic} follows from combining admissibility with property (i). Hence every admissible collection $\ssigma$ satisfying the property (i) from the statement of Proposition \ref{prop:basic} defines a sequence of toroidal compactifications $\{\Asigma\}$ to which our stability results (Proposition \ref{prop:stablebeta} and Lemma \ref{lemma:setminusbeta}) extend. Thus our main result, the stabilization and algebraicity given by Theorems \ref{thm:main} and \ref{algebraic} apply for any such $\ssigma$,  possibly after replacing cohomology with cohomology with compact support in the case that $\Asigma$ is a partial compactification.

As we will see at the end of this section, some natural examples of such sequences of partial compactifications are the matroidal partial compactification $\Matr$ (and applying the machinery below gives the stabilization results in this case), as well as the smooth locus $\Perf[g,\operatorname{smooth}]$ or the simplicial locus $\Perf[g,\operatorname{simp}]$ within $\Perf$.
\end{rem}

\begin{rem}\label{rem:homology}
Throughout this section, we prove all our results in the case of cohomology. However, it is straightforward to adapt the proofs to work also for homology. For this we need only to
replace cohomology with compact support with Borel--Moore homology, and the Gysin long exact sequences with their duals, the long exact sequences in Borel--Moore homology associated with closed inclusions.
\end{rem}

\begin{prop} \label{prop:stablebeta}
The strata  $\beta^0_{i,g}$  have stable cohomology with compact support in degree close to the top degree as $g$ goes to infinity.

More precisely, the cohomology
groups $H_c^{\pu}(\beta^0_{i,g}, \QQ)$ satisfy
\begin{equation}\label{plus-strata}
\cohc[k]{\beta^0_{i,g}}=\bigoplus_{\rank\sigma = i}\cohc[k]{\stratum g{\sigma}}
\end{equation}
if $k>\topd-g+i+1$.
Furthermore, in this range the cohomology groups with compact support are independent of $g$ and are all algebraic, so that in particular all odd cohomology vanishes.
\end{prop}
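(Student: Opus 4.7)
The plan is to stratify $\betazero{i,g}=\bigsqcup_\sigma \stratum g\sigma$, indexed by $\GL(g,\ZZ)$-orbits of rank $i$ cones, and to inductively glue the strata together using excision long exact sequences for cohomology with compact support, with a parity argument in the stable range forcing all connecting morphisms to vanish.

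By Proposition \ref{prop:basic}(ii), the number $N$ of $\GL(g,\ZZ)$-orbits of rank $i$ cones is finite and independent of $g$ for $g\geq i$. I would choose representatives $\sigma_1,\dots,\sigma_N$ ordered by increasing dimension and set $V_k:=\bigsqcup_{j\leq k}\stratum g{\sigma_j}$. Since the closure of $\stratum g{\sigma_j}$ in $\betazero{i,g}$ equals $\bigsqcup_{\sigma_j\preceq\tau,\,\rank\tau=i}\stratum g\tau$, whose strata all satisfy $\dim\tau\geq\dim\sigma_j$, each $V_k$ is open in $\betazero{i,g}$, with $\stratum g{\sigma_k}$ closed in $V_k$. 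Excision then yields
\begin{multline*}
\cdots\to\cohc[\ell-1]{\stratum g{\sigma_k}}\to\cohc[\ell]{V_{k-1}}\to\cohc[\ell]{V_k}\\
\to\cohc[\ell]{\stratum g{\sigma_k}}\to\cohc[\ell+1]{V_{k-1}}\to\cdots.
\end{multline*}

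The crucial input is Theorem \ref{thm:stablestrata}: for each rank $i$ cone $\sigma$ of dimension $\ell_\sigma\geq i$, the cohomology $\coh[m]{\stratum g\sigma}$ is algebraic (hence pure Hodge--Tate) and vanishes in odd degree $m$, in the stable range $m<g-i-1$. Since $\stratum g\sigma$ is a finite quotient of the smooth variety $\calT_g(\sigma)$ of complex dimension $g(g+1)/2-\ell_\sigma$, rational Poincar\'e duality gives $\cohc[\ell]{\stratum g\sigma}\cong\coh[\topd-\ell-2\ell_\sigma]{\stratum g\sigma}^\vee$ up to Tate twist. For $\ell>\topd-g+i+1$ the right-hand side lies inside the stable range of Theorem \ref{thm:stablestrata}, and it vanishes whenever $\topd-\ell$ is odd.

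Inducting on $k$, I would then conclude $\cohc[\ell]{V_k}=\bigoplus_{j\leq k}\cohc[\ell]{\stratum g{\sigma_j}}$ for all $\ell>\topd-g+i+1$. Indeed, in this range all groups $\cohc[\ell]{\stratum g{\sigma_j}}$ and, by induction, $\cohc[\ell]{V_{k-1}}$ are concentrated in values of $\ell$ with $\topd-\ell$ even, so the connecting morphisms in the excision sequence connect an even- and an odd-parity degree and must vanish (at least one of the adjacent groups is zero). The excision sequence thus collapses into split short exact sequences. Taking $k=N$ yields the formula \eqref{plus-strata}, while $g$-independence and algebraicity are inherited term-by-term from Theorem \ref{thm:stablestrata}. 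The main delicate point is the bookkeeping: verifying the toric closure relation carefully so that each $V_k$ is genuinely open, and checking that every cohomology group arising in the induction stays inside the stable range of Theorem \ref{thm:stablestrata}; the latter is automatic as $\topd-\ell-2\ell_\sigma\leq\topd-\ell<g-i-1$.
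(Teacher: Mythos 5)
Your proof is correct and follows essentially the same route as the paper: stratify $\betazero{i,g}$ by the dimension of the rank~$i$ cones, use Theorem~\ref{thm:stablestrata} together with Poincar\'e duality on each finite-quotient stratum $\stratum g\sigma$ to see that $\cohc[\ell]{\stratum g\sigma}$ is stable, algebraic and concentrated in degrees of fixed parity, and conclude that all connecting maps vanish. The only cosmetic difference is that the paper packages the gluing as degeneration at $E_1$ of the Gysin spectral sequence of the filtration by cone dimension, whereas you add one stratum at a time via excision long exact sequences; the key inputs and the parity argument are identical.
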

\begin{proof}
We first recall from Theorem \ref{thm:stablestrata} that the cohomology of the strata $\stratum g\sigma$ stabilizes in degree $k<g-i-1$ and is algebraic in this range. As each $\stratum g\sigma$ is the global quotient of the smooth space $\calT_g(\sigma)$ by a finite group, Poincar\'e duality holds and the cohomology with compact support of $\stratum g\sigma$ stabilizes and is algebraic in degree $k>2\dim_\CC\stratum g\sigma-g+i+1\leq\topd-g+i+1$. Hence,  equality \eqref{plus-strata} implies the rest of the claim.

By definition of the toroidal compactification, the locus $\betazero{i,g}\subset\Perf$ is stratified by the locally closed subvarieties $\stratum g\sigma$ defined by the rank $i$ cones $\sigma$ in the perfect cone decomposition. The dimension of these cones ranges from $i$ to $i(i+1)/2$. Therefore, we have
$$
\betazero{i,g}=\bigsqcup_{0\leq j\leq i(i+1)/2-i}W_{j,g}
$$
where we denoted by $W_{j,g}$ the (disjoint) union of all $\stratum g\sigma$ with $\sigma$ of rank $i$ and dimension $i(i+1)/2-j$. Note that the closures $\overline W_{j,g}$ define a filtration on $\betazero{i,g}$ and that the Gysin spectral sequence associated with this filtration has $E_1$ term
$$
E_1^{p,q}=\cohc[p+q]{W_{p,g}}
$$
where we set $W_{p,g}$ to be empty if no cone of rank $i$ and dimension $i(i+1)/2-p$ exists. As the cohomology with compact support of $W_{p,g}$ is the direct sum of the cohomology with compact support of its locally closed strata $\stratum g\sigma$, to show the claim \eqref{plus-strata} it suffices to show that the spectral sequence associated with $\{\overline W_{j,g}\}$ degenerates at $E_1$ in the range $p+q>\topd-g+i+1$.

To this end, let us note that Theorem \ref{thm:stablestrata} implies that $E_1^{p,q}$ vanishes if $p+q$ is odd and $p+q>2\dim_\CC W_{p,g}-g+i+1$, i.e.~for $q>g^2-i^2-2i+p+1$. In particular, in the non-trivial columns, where $i(i+1)/2-i\leq p \leq i(i+1)/2$ holds, all differentials of the form
$$
E_r^{p,q}\rightarrow E_r^{p+r,q-r+1} \ \text{or} \ E_r^{p-r,q+r-1}\rightarrow E_r^{p,q}
$$
with $p+q>2\dim_\CC\betazero{i,g}-g+i+1 = g^2-i+1$ are in this range. Hence, either the source or the target space of the differential vanishes. From this it follows that $E_1^{p,q}=E_{\infty}^{p,q}$ holds for $p+q>\topd -g+i+1$.
\end{proof}

\begin{rem}
The proposition above holds for any admissible collection $\ssigma$. The proof can be easily extended to this more general case. This only requires us to keep track of the fact that the top degree may be larger than $g(g+1)-2i$.
\end{rem}
\begin{lm}\label{lemma:setminusbeta}
The cohomology of the open subset $\Perf\setminus\beta_{i+1,g}$ stabilizes in close to the top degree, i.e.~the cohomology group $\cohc[g(g+1)-k]{\Perf\setminus\beta_{i+1,g}}$ is independent of $g$ for $k<g$.
Furthermore, in this range the cohomology groups with compact support are all algebraic, so that in particular all odd cohomology vanishes.
\end{lm}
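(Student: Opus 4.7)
The plan is to proceed by induction on $i$. The base case $i=0$ is direct: $\Perf\setminus\beta_{1,g}=\beta_{0,g}^0=\ab$ is smooth as a Deligne--Mumford stack, so Poincar\'e duality identifies $\cohc[g(g+1)-k]{\ab}$ with the dual of $\coh[k]{\ab}$. By Borel's theorem (Corollary \ref{Ag}), for $k<g$ the latter is the polynomial algebra on the odd Hodge classes $\lambda_{2j+1}$. This is independent of $g$, algebraic, and vanishes in odd cohomological degree; since $g(g+1)$ is always even, via Poincar\'e duality the same conclusions transfer to $\cohc[g(g+1)-k]{\ab}$.

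For the inductive step, assume the conclusion for $\Perf\setminus\beta_{i,g}$ and consider the Gysin (excision) long exact sequence in cohomology with compact support associated to the closed inclusion $\beta_{i,g}^0\hookrightarrow\Perf\setminus\beta_{i+1,g}$ with open complement $\Perf\setminus\beta_{i,g}$:
\begin{multline*}
\cdots\to\cohc[\ell-1]{\beta_{i,g}^0}\to\cohc[\ell]{\Perf\setminus\beta_{i,g}}\to\cohc[\ell]{\Perf\setminus\beta_{i+1,g}}\\
\to\cohc[\ell]{\beta_{i,g}^0}\to\cohc[\ell+1]{\Perf\setminus\beta_{i,g}}\to\cdots
\end{multline*}
Setting $\ell=g(g+1)-k$, the inductive hypothesis furnishes algebraicity (hence odd-degree vanishing) of $\cohc[\ell]{\Perf\setminus\beta_{i,g}}$ and its neighbors $\cohc[\ell\pm 1]{\Perf\setminus\beta_{i,g}}$ for $k<g$ and $g$ sufficiently large. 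For $i\geq 1$, Proposition \ref{prop:stablebeta} supplies the analogous conclusion for $\cohc[\ell]{\beta_{i,g}^0}$ and its neighbors, since its stable range (codegree less than $g+i-1$) comfortably contains $\{k-1,k,k+1\}$ once $g$ is large enough relative to $k$. Since $g(g+1)$ is even, $\ell$ and $k$ share parity; the long exact sequence therefore degenerates into the short exact sequences
\begin{equation*}
0\to\cohc[\ell]{\Perf\setminus\beta_{i,g}}\to\cohc[\ell]{\Perf\setminus\beta_{i+1,g}}\to\cohc[\ell]{\beta_{i,g}^0}\to 0
\end{equation*}
for even $\ell$, while the middle term vanishes for odd $\ell$. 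As an extension of stable algebraic groups, $\cohc[\ell]{\Perf\setminus\beta_{i+1,g}}$ is itself stable and algebraic: classes from the open part are pushed forward by extension by zero, while algebraic lifts of classes from the closed stratum are given by closures of algebraic cycles.

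The main obstacle, as in many stabilization arguments, is to verify that the stabilization is induced by natural maps as $g$ varies rather than being merely a numerical coincidence of dimensions. Concretely, one must check that the stability map $\Perf\to\Perf[g+1]$ restricts compatibly to each piece of the filtration $\{\beta_i\}$, yielding a commutative ladder of short exact sequences for $g$ and $g+1$. Since the inductive hypothesis and Proposition \ref{prop:stablebeta} give isomorphisms on the outer columns of this ladder, the five-lemma propagates the isomorphism to the middle column, closing the induction. Phrasing everything in Borel--Moore homology in the spirit of Remark \ref{rem:homology}, where the Gysin pushforward along $\Perf\hookrightarrow\Perf[g+1]$ is the natural stabilization map, makes this functoriality most transparent.
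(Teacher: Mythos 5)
Your induction on $i$ via the Gysin long exact sequence is just the unrolled form of the paper's argument, which runs the Gysin spectral sequence of the stratification of $\Perf\setminus\beta_{i+1,g}$ by the strata $\betazero{j,g}$ all at once and uses the same parity vanishing from Proposition \ref{prop:stablebeta} to force degeneration at $E_1$; the decomposition, the key lemma, and the functoriality caveat (handled via Borel--Moore homology as in Remark \ref{rem:homology}) all coincide. The one point you gloss over with ``$g$ large enough relative to $k$'' is that for the shallow strata $\betazero{0,g}=\ab$ and $\betazero{1,g}$ the generic bound of Proposition \ref{prop:stablebeta} just barely misses the neighboring degrees $\ell\pm1$ when $k$ is close to $g$, which the paper repairs by invoking the slightly better stable ranges of Borel's theorem and Proposition \ref{Xg} for those two strata.
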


\begin{proof}
The main idea of the proof is the same as in the previous proposition: we consider an appropriate stratification of $\beta_{i+1,g}$ into locally closed subsets, we prove that the $E_1^{p,q}$-terms of the associated Gysin spectral sequence stabilize for $p+q<2\dim_\CC\Perf - g = g^2$ and that moreover  all differentials with either source or target in this stable range vanish, so that they stabilize as well. In this case, the natural approach is to stratify $\Perf\setminus\beta_{i+1,g}$ as the union of the strata $\betazero{i-j,g}$ for $j=0,\dots,i$. Then the associated Gysin spectral sequence in cohomology with compact support has $E_1$ term
$$
E_1^{p,q}=\cohc[p+q]{\betazero{i-p,g}},
$$
which in view of Proposition \ref{prop:stablebeta} stabilizes and is algebraic for $p+q>2\dim_\CC\betazero{i-p,g}-g+i-p+1=g^2-i+p+1$. In the case of the strata $\beta_0=\ab$ and $\beta_1=\ua$, however, the bound given in Theorem \ref{thm:stable} and Proposition \ref{Xg} is slightly better, so that we have
$p+q>2\dim_\CC\betazero{i-p,g}-g+i-p=g^2-i+p$ as stability range for $p\in\{i-1,i\}$. At this point, we observe that all $E_1^{p,q}$-terms with either $p+q>g^2$ or $p+q>g^2-1$ and $p<i$ lie in the stable range, and that they vanish if $p+q$ is odd. This implies that $E_1^{p,q}=E_\infty^{p,q}$ stabilizes for $p+q>g^2$, so that we have $\cohc[k]{\Perf\setminus\beta_{i+1,g}}=\bigoplus_{0\leq j\leq i}\cohc[k]{\betazero{j,g}}$ for $k>g^2$.
This is enough to prove the stability of cohomology with compact support for $k>g^2$. The fact that the classes are algebraic follows from the corresponding results for the $\betazero{j,g}$.
\end{proof}

We can now finally prove that the cohomology of $\Perf$ with compact support, in degree close to top, stabilizes.
The method is similar to the one developed in the previous sections: to compute $H^{{\rm top}-k}_c(\Perf,\QQ)$,
we need to analyze all the strata of complex codimension up to $\lfloor k/2\rfloor$ in $\Perf$. As we have pointed out before, the
mere fact that for $g\gg k$ there is a finite fixed collection of such cones,
which was shown in  Proposition \ref{prop:basic},
is special to the perfect cone decomposition. We are now ready to prove our main result, the stabilization of
cohomology $H^{g(g+1)-k}(\Perf, \QQ)$ for $k < g$.

\begin{proof}[Proof of the main theorem, Theorem \ref{thm:main}]
As  each $\beta_i$ has codimension $i$ in $\Perf$ and $\Perf$ is compact, the cohomology  of $\Perf[g']$ in degree $g'(g'+1)-k$ coincides with the cohomology with compact support of $\Perf[g']\setminus\beta_{\lceil g/2\rceil+1,g'}$ for $k < g\leq g'$. Then the claim follows from the isomorphism  $$\cohc[g(g+1)-k]{\Perf\setminus\beta_{\lceil g/2\rceil+1,g}}\cong \cohc[g'(g'+1)-k]{\Perf\setminus\beta_{\lceil g/2\rceil+1,g'}}$$ described in Lemma~\ref{lemma:setminusbeta} for $k<g<g'$.
\end{proof}

The singularities of the space $\Perf$ and the ensuing failure of Poincar\'e duality  have forced us to switch to cohomology with compact support.
We will now discuss open subsets of $\Perf$ where this problem does not arise.
The first question is to understand the singularities of $\Perf$ better. To be precise, we are interested in those singularities which come from the perfect cone compactification itself, rather than those that arise from the non-neatness of the group $\Sp(2g,\ZZ)$, which do not give singular points of the stack $\Perf$ but only of its coarse moduli space.
As the latter singularities do not occur on suitable level covers they are no
obstruction to Poincar\'e duality as long as one works with rational coefficients. We shall denote the locus of these
singularities that are not resolved by going to a level cover by $\Psing$. Indeed this is the singular locus of the stack $\Perf$.
We have the following recent result of Dutour Sikiri\'c, Sch\"urmann, and the second author:
\begin{prop}[see \cite{DutourHulekSchuermann}]\label{prop:boundsingular}
The stack $\Perf$ is smooth for $g \leq 3$ and the (complex) codimension of its singular locus $\Psing$
is equal to $10$ for any $g \geq 4$.
\end{prop}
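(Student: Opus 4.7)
The plan is to reduce the question to a purely combinatorial statement about the perfect cone fan, and then to invoke (partial) classifications of perfect forms in small dimension.

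First I would recall that, locally near a point in the stratum $\beta(\sigma)$, the stack $\Perf$ looks like the quotient of the affine toric variety $X_\sigma$ attached to a cone $\sigma \subset \Sym^2_{\operatorname{rc}}(\RR^i)$ by a finite group (the stabilizer $G(\sigma) \subset \GL(i,\ZZ)$ together with the automorphism groups coming from the abelian part $\ab[g-i]$). Such a toric stack is smooth if and only if $\sigma$ is a simplicial cone, i.e.\ if the primitive generators of its extremal rays are linearly independent; the $\ZZ$-basis condition that would be needed for honest smoothness is not relevant here because we pass to the stack quotient. Thus the stack singular locus $\Psing$ is the union of the strata $\beta(\sigma)$ for which $\sigma$ is a non-simplicial perfect cone, and since $\operatorname{codim}_\CC \beta(\sigma) = \dim \sigma$, the codimension of $\Psing$ is
\[
\min\{\dim \sigma \mid \sigma \text{ is a non-simplicial perfect cone of some rank } i \leq g\}.
\]

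Next I would address the smooth cases $g \leq 3$ by enumerating the $\GL(g,\ZZ)$-orbits of perfect cones in $\Sym^2_{\operatorname{rc}}(\RR^g)$ for $g = 1, 2, 3$. For $g \leq 3$ the classical Voronoi classification shows that each perfect form $A_g$ has exactly $g(g+1)/2$ minimal vectors up to sign, so the corresponding perfect cone is generated by exactly $\dim \Sym^2(\RR^g)$ rank-one forms and is therefore simplicial; all faces of such cones are simplicial as well, so every cone in the perfect fan is simplicial, and hence $\Perf$ is smooth as a stack for $g \leq 3$.

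For $g \geq 4$, the key input is the $D_4$-perfect form, whose set of minimal vectors contains $12$ pairs $\pm \xi$. The associated perfect cone $\sigma_{D_4}$ lives inside $\Sym^2_{\operatorname{rc}}(\RR^4)$, has rank $4$, is generated by these $12$ rank-one forms, and spans the full $10$-dimensional space $\Sym^2(\RR^4)$. Since $12 > 10$, the cone $\sigma_{D_4}$ is not simplicial, and for any $g \geq 4$ it embeds as a non-simplicial rank-$4$ perfect cone of dimension $10$ inside $\Sym^2_{\operatorname{rc}}(\RR^g)$ (after choosing a rank-$4$ sublattice of $\ZZ^g$). This exhibits $\Psing$ in codimension at most $10$.

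The hard part, and the main obstacle, is the lower bound: one must show that no non-simplicial perfect cone of dimension strictly less than $10$ exists, for any rank $i \leq g$. This requires going through the classification of perfect forms of rank $i \leq 4$ and, more delicately, of faces of perfect cones in these ranks. For $i \leq 3$ one uses the enumeration above. For $i = 4$, one has to verify that every face of $\sigma_{D_4}$ of dimension less than $10$ is simplicial, and moreover that the other $\GL(4,\ZZ)$-orbit of perfect cones in rank $4$ (the one coming from $A_4$) is simplicial in all its faces. This is where I would appeal to the computer-assisted enumeration of perfect cones and their faces carried out in \cite{DutourHulekSchuermann}; in practice this step is the content of that paper and is the reason why this proposition is cited rather than proved from scratch here.
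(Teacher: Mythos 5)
The paper does not actually prove this proposition: it is quoted from \cite{DutourHulekSchuermann} with no internal argument, so there is nothing to compare your write-up against except the framework the paper sets up around it. Within that framework your reduction has the right general shape, but it contains a genuine error in the smoothness criterion. You assert that the stack is smooth along $\beta(\sigma)$ as soon as $\sigma$ is simplicial, ``because we pass to the stack quotient.'' That is not how the toroidal compactification is built: the local model of the stack along $\beta(\sigma)$ contains the affine toric variety $X_\sigma$ (for the lattice $\Sym^2(\ZZ^i)$) as an honest factor \emph{before} any finite group quotient is taken, so the quotient singularities of $X_\sigma$ for a simplicial non-basic cone are genuine singularities of the stack --- precisely the singularities ``not resolved by going to a level cover'' in the paper's terminology. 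The correct criterion is that $\sigma$ be \emph{basic}, i.e.\ that its generators form part of a $\ZZ$-basis of $\Sym^2(\ZZ^i)$; this is why the paper defines $\Psmooth$ via basic cones and keeps the (a priori larger, only rationally smooth) simplicial locus $\Psimp$ separate. Consequently your lower-bound step --- checking that every face of dimension $<10$ is simplicial --- establishes the wrong thing; one must check that all such faces are basic, a strictly stronger condition. (The final number happens to be unaffected only because, per \cite{DutourHulekSchuermann}, the minimal non-basic cone is the $D_4$ cone, which is moreover non-simplicial.)

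A second gap is the range of the enumeration: cones of dimension $\le 9$ in the perfect fan can have rank anywhere up to $9$ (e.g.\ the standard cone $\langle x_1^2,\dots,x_9^2\rangle$, or the rank-$5$, dimension-$5$ cones $\sigma_{C_5}$ and $\sigma_{NS}$ appearing in Section~\ref{sec:numbers}), so restricting the verification to ranks $i\le 4$ does not cover all strata of codimension $<10$; by Proposition~\ref{prop:basic} the relevant list is finite and independent of $g$, but it runs through rank $9$. On the positive side, your upper bound via the $D_4$ cone ($12$ extremal rays spanning the $10$-dimensional $\Sym^2(\RR^4)$, hence non-simplicial and a fortiori non-basic) is correct, and the $g\le 3$ argument is fine once ``simplicial'' is upgraded to ``basic'' (the unique perfect cone $A_g$ for $g\le 3$ is in fact basic, not merely simplicial).
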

We denote the underlying variety of the smooth locus of the stack $\Perf$ by
$$
 \Perf[g,{\operatorname{smooth}}]:=\Perf \setminus  \Psing.
$$
From the toroidal point of view, we can view $\Perf[g,{\operatorname{smooth}}]$ as the partial toroidal compactification of $\ab$ defined by the cone decomposition one obtains by considering only those perfect cones that are basic. We recall that a cone is called basic if its generators form a $\ZZ$-basis of $\Sym^2(\ZZ^g)$. In fact recall more generally that a cone is called simplicial if its generators form a $\RR$-basis of $\Sym^2(\RR^g)$, and in this case the toric variety
is locally the quotient of a smooth space by a finite abelian group.

The union of all simplicial cones defines an open subset $\Perf[g,{\operatorname{simp}}]$ of $\Perf$. Since the singularities of $\Perf$ in codimension $10$
arise from the non-simplicial cone $D_4$ (see  \cite[Theorem 1 (ii) ]{DutourHulekSchuermann}), it follows
that the codimension of the complement of $\Perf[g,{\operatorname{simp}}]$ in $\Perf$ is also $10$.
The main advantage of working with the simplicial locus (and suitable open subsets) is that all its points are rationally smooth.
This follows from rational smoothness of simplicial toric varieties, see eg.~\cite[Theorem 11.4.8]{coxlittleschenck}.
Note that rational smoothness ensures that rational cohomology coincides with the middle perversity intersection cohomology.
In our case, this implies that the cohomology of the simplicial locus satisfies Poincar\'e duality and that we have a cycle map to cohomology which is a ring homomorphism, i.e.~we can interpret algebraic cycles of (complex) codimension $k$ as cohomology classes in degree $2k$.

\begin{prop} \label{prop:cohossmooth}
\begin{enumerate}
\item \label{cohossmooth-i}
The cohomology stabilizes for the smooth and for the simplicial locus within $\Perf$, i.e.~the cohomology groups $H^k(\Perf[g,\operatorname{smooth}])$ and $H^k(\Perf[g,\operatorname{simp}])$ are both independent of $g$ for $k<g$.
\item 
For $k < 19$ there are isomorphisms
$$
H^{\operatorname{top}-k}(\Perf, \QQ)
\cong H^k(\Perf[g,{\operatorname{smooth}}],\QQ) \cong H^k(\Perf[g,{\operatorname{simp}}],\QQ)
$$
induced by the Poincar\'e duality on $\Perf[g,{\operatorname{smooth}}]$ and $\Perf[g,{\operatorname{simp}}]$, respectively.
\end{enumerate}
\end{prop}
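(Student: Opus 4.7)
The plan is to combine Poincar\'e duality on the (rationally) smooth partial compactifications $\Perf[g,\operatorname{smooth}]$ and $\Perf[g,\operatorname{simp}]$ with the Gysin excision sequence applied to their closed complements in $\Perf$, both of which have complex codimension $10$ by Proposition~\ref{prop:boundsingular} and the discussion preceding the proposition.

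For part~(i), I will first verify that the families of basic cones and of simplicial cones each form an admissible collection satisfying condition~(i) of Proposition~\ref{prop:basic}. Admissibility is automatic, since both properties are preserved by $\GL(g,\ZZ)$ and by the inclusion $\ZZ^g\hookrightarrow\ZZ^{g+1}$; condition~(i) holds because the standard cone $\langle x_1^2,\dots,x_i^2\rangle$ is basic (hence a fortiori simplicial), has rank and dimension both equal to $i$, and so its stratum realises codimension exactly $i$ in each of the two partial compactifications. By Remark~\ref{rem:extensions}, the argument of Lemma~\ref{lemma:setminusbeta} then applies verbatim and shows that $H^{g(g+1)-k}_c(\Perf[g,\operatorname{smooth}])$ and $H^{g(g+1)-k}_c(\Perf[g,\operatorname{simp}])$ are independent of $g$ for $k<g$. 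Invoking Poincar\'e duality on the smooth Deligne--Mumford stack $\Perf[g,\operatorname{smooth}]$ and on the rationally smooth space $\Perf[g,\operatorname{simp}]$ (rational smoothness having been recalled via \cite[Theorem~11.4.8]{coxlittleschenck}) then converts this into the desired stabilization of $H^k$ in the range $k<g$.

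For part~(ii), I apply the Gysin excision sequence to the open inclusion $\Perf[g,\operatorname{simp}]\subset\Perf$. Since the closed complement has complex codimension $10$, its compactly supported cohomology vanishes in every degree exceeding $g(g+1)-20$; for $k<19$ both adjacent terms in the excision sequence vanish and one reads off
$$
H^{g(g+1)-k}_c(\Perf[g,\operatorname{simp}])\;\cong\;H^{g(g+1)-k}_c(\Perf)\;=\;H^{g(g+1)-k}(\Perf),
$$
the last equality using the compactness of $\Perf$. The same argument with $\Psing\subset\Perf$ as the closed complement handles $\Perf[g,\operatorname{smooth}]$. Composing each of these isomorphisms with Poincar\'e duality on the respective (rationally) smooth locus produces the claimed chain of isomorphisms.

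The one step deserving real care is the use of Poincar\'e duality on $\Perf[g,\operatorname{simp}]$: this space is not smooth but only a rational homology manifold, and one must check that rational smoothness of simplicial toric varieties is inherited by the whole toroidal stratification, so that $H^{g(g+1)-k}_c\cong H^k$ truly holds with $\QQ$-coefficients. Once this local-to-global point is in hand, the remaining ingredients --- the codimension bound from \cite{DutourHulekSchuermann}, Gysin excision, and the stabilization machinery of Remark~\ref{rem:extensions} --- assemble without further difficulty.
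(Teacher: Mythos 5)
Your proposal is correct and follows essentially the same route as the paper: part (i) via the admissible-collection machinery of Remark~\ref{rem:extensions} and Lemma~\ref{lemma:setminusbeta} (plus the codimension of $\beta_i$ to pass to the full partial compactification) followed by Poincar\'e duality on the rationally smooth loci, and part (ii) via Gysin excision against the codimension-$10$ complements, which yields exactly the bound $k<2\cdot 10-1=19$. The point you flag about Poincar\'e duality on $\Perf[g,\operatorname{simp}]$ is handled in the paper precisely as you suggest, by local rational smoothness of simplicial toric varieties (\cite[Theorem 11.4.8]{coxlittleschenck}).
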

\begin{proof}
(i)
As explained in Remark \ref{rem:extensions}, the proof of the main result above also serves to show that $\cohc[g(g+1)-k]{\Perf[g,\operatorname{simp}]\setminus\beta_{i+1,g}}$ is independent of $g$ for $k<g$. In particular, this holds for $i=\lceil g/2\rceil+1$. As the codimension of $\beta_i$ is $i$, one gets
$$
\cohc[g(g+1)-k]{\Perf[g,\operatorname{simp}]\setminus\beta_{i+1,g}}
\cong
\cohc[g(g+1)-k]{\Perf[g,\operatorname{simp}]}
\cong
\cohc[k]{\Perf[g,\operatorname{simp}]}
$$
where the last isomorphism is Poincar\'e duality for the rationally smooth $\Perf[g,\operatorname{simp}]$. This shows the stabilization of the cohomology of $\Perf[g,\operatorname{simp}]$. The proof for $\Perf[g,\operatorname{smooth}]$ is completely analogous.

(ii)
We first note that $H^{\operatorname{top}-k}(\Perf, \QQ) \cong H_c^{\operatorname{top}-k}(\Perf, \QQ)$ holds since $\Perf$ is compact.
By the Gysin sequence applied to the inclusion of
$\Psing$ into $\Perf$ we obtain an isomorphism $H_c^{\operatorname{top}-k}(\Perf, \QQ)
\cong H_c^{\operatorname{top}-k}(\Psmooth,\QQ)$ for
$k < 2\cdot\codim_\CC \Psing - 1=19$.
Finally, we have  by Poincar\'e duality $H_c^{\operatorname{top}-k}(\Psmooth,\QQ)\cong H^k(\Psmooth,\QQ)$. The same proof applies
to the simplicial locus since Poincar\'e duality also holds there.
\end{proof}

The third open locus of $\Perf$ which is of interest to us is the {\em matroidal} locus $\Matr$. The importance of this locus was pointed out by
Melo and Viviani \cite{mevi}, who identified it as the biggest partial compactification of $\ab$  contained in both the second Voronoi and the
perfect cone compactification. This means that we can think of $\Matr$ as the ``intersection'' of $\Vor$ and $\Perf$. The matroidal locus
is defined as the partial compactification obtained by taking all matroidal cones. Recall that a matrix $A \in \Mat_{\ZZ}(g,n)$ is called totally unimodular if
every square submatrix has determinant $-1$, $0$ or $1$. A matrix $A \in \Mat_{\ZZ}(g,n)$ is called {\em unimodular} if there exists a matrix $B \in \GL(g,\ZZ)$
such that $BA$ is totally unimodular. A cone in $\Sym^2_{\operatorname {rc}}(\RR^g)$ is called {\em matroidal} if it is spanned by the rank $1$ forms
defined by the columns of a unimodular matrix. It is known that all matroidal cones are simplicial \cite[Theorem 4.1]{errydicing} and thus $\Matr \subset \Perf[g,{\operatorname{simp}}]$.
We also know that the codimension of the complement of $\Matr$ in $\Perf$ is $5$, due to the existence of a non-matroidal dimension $5$ cone in genus $5$ (namely the
cone $\sigma_{NS}$ discussed below among the codimension 5 strata).

As matroidal cones are simplicial, we have that $\Matr$ is again rationally smooth. In particularly it satisfies Poincar\'e duality (with rational coefficients), so that the same argument as in the  proof of part (i) of Proposition \ref{prop:cohossmooth} applies to $\Matr$, thus providing a proof of the stabilization of the rational cohomology of $\Matr$ in degree $k<g$ (Theorem \ref{thm:Matrstabilizes}).

Finally, the considerations above also apply to even smaller open loci of $\Perf[g,{\operatorname{simp}}]$. For instance, one can take the partial compactification of $\ab$ given by taking the union of all strata associated with standard cones, i.e.~of all $\stratum g{\sigma}$ with $\sigma$ a cone of the form $\sigma=\langle x_1^2,x_2^2,\dots,x_i^2\rangle$ for $0\leq i\leq g$ (see \S\ref{subsec:standard}). We will denote this union of the standard strata by $\Std$. As there is just one standard cone in each dimension, and standard cones are always basic, it is easy to adapt the proof of part (i) of Proposition~\ref{prop:cohossmooth} to prove that the rational cohomology of $\Std$ stabilizes in degree $k<g$ and is generated by algebraic classes. However, as the stable cohomology of strata associated with standard cones is known by Lemma~\ref{lm:standard}, in this case we can compute this stable cohomology explicitly.
\begin{thm}\label{thm:Stdstabilizes}

The cohomology  of the partial toroidal compactification defined by the standard cones stabilizes, i.e.~$H^k(\Std,\QQ)$ does not depend on $g$ for $k<g$. The stable cohomology is the polynomial algebra generated by the odd $\lambda$-classes and the fundamental classes $[\beta_i]\in\coh[2i]{\Std}$ of the boundary strata.
\end{thm}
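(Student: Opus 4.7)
The plan is to filter $\Std$ by the smooth partial compactifications $V_j:=\bigsqcup_{i=0}^{j}\stratum g{\sigma_i}$, where $\sigma_i=\langle x_1^2,\dots,x_i^2\rangle$ is the $i$-dimensional standard cone, and to compute $H^\pu(V_j)$ inductively. Since the standard cones are basic, each $V_j$ is a smooth Deligne--Mumford stack in which $\stratum g{\sigma_j}$ sits as a smooth closed substack of complex codimension $j$ with open complement $V_{j-1}$; the base cases $V_0=\ab$ and $V_1=\ab'$ are supplied by Corollary~\ref{Ag} and Proposition~\ref{Agpartial}. For the inductive step I would use the Gysin long exact sequence coming from Poincar\'e duality on $V_j$,
$$
\cdots\to H^{k-2j}(\stratum g{\sigma_j})(-j)\xrightarrow{j_!}H^k(V_j)\xrightarrow{\mathrm{res}}H^k(V_{j-1})\to H^{k-2j+1}(\stratum g{\sigma_j})(-j)\to\cdots.
$$
By Lemma~\ref{lm:standard} the stable cohomology of $\stratum g{\sigma_j}$ vanishes in odd degree, so together with the inductive vanishing of $H^{\odd}(V_{j-1})$ the long exact sequence degenerates into short exact sequences, and the stabilization of $H^k(V_{j-1})$ for $k<g$ propagates to $V_j$. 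In particular $H^k(\Std)=H^k(V_g)$ stabilizes for $k<g$.

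The key geometric input needed to identify the generators is a restriction formula of the form
$$
[\beta_k]\big|_{\stratum g{\sigma_j}}=c_k\,e_k(T_1,\dots,T_j)+(\text{terms of lower degree in the }\epsilon_i)\qquad(1\le k\le j),
$$
with $c_k\neq 0$, where $e_k$ denotes the $k$-th elementary symmetric polynomial in the pullbacks $T_1,\dots,T_j$ of the universal theta class to the $j$ factors of $\ua[g-j]^{\times j}$. This should come from iterating the self-intersection relation $D|_D=-2\Theta$ of \cite[Prop.~1.8]{mumforddimag} used in Proposition~\ref{Agpartial}: in the torus bundle $\calT_g(\sigma_j)\to\ua[g-j]^{\times j}$ the preimage of $\beta_1$ splits as the union of the $j$ toric boundary divisors $D_1,\dots,D_j$ corresponding to the rays $\langle x_m^2\rangle$ of $\sigma_j$, each with normal bundle class $-2T_m$, and one computes $[D_{i_1}\cap\cdots\cap D_{i_k}]|_{\stratum g{\sigma_j}}=(-2)^k T_{i_1}\cdots T_{i_k}$ before descending to $\stratum g{\sigma_j}=G(\sigma_j)\backslash\calT_g(\sigma_j)$ by averaging under $G(\sigma_j)=S_j\ltimes\{\pm 1\}^j$. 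Since the $e_k$ together with the classes $\lambda_{2i+1}$ generate all of $H^\pu(\stratum g{\sigma_j})$ by Lemma~\ref{lm:standard}, this implies that the restriction $j^\ast\colon H^\pu(V_j)\to H^\pu(\stratum g{\sigma_j})$ is surjective.

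Combining this surjectivity with the projection formula $j_!(j^\ast\widetilde\gamma)=\widetilde\gamma\cdot[\beta_j]|_{V_j}$, I would then argue by induction on the cohomological degree that every class in $H^\pu(V_j)$ is a polynomial in $\lambda_1,\lambda_3,\dots$ and $[\beta_1]|_{V_j},\dots,[\beta_j]|_{V_j}$: the restriction $\mathrm{res}(\alpha)$ lifts to such a polynomial by the inductive hypothesis on $V_{j-1}$, and the remaining part lies in the image of $j_!$, hence by the projection formula equals $[\beta_j]|_{V_j}$ times a polynomial in the lower-degree generators.

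Finally, to conclude that the stable cohomology of $\Std$ is a free polynomial algebra on the asserted generators, I would compare Poincar\'e series. The short exact sequences above yield, in the stable range,
$$
P(\Std,t)=P(\ab,t)\sum_{j\ge0}\frac{t^{2j}}{\prod_{i=1}^{j}(1-t^{2i})}=P(\ab,t)\prod_{j\ge1}\frac{1}{1-t^{2j}},
$$
the second equality being Euler's classical $q$-series identity $\sum_{n\ge0}q^n/\prod_{i=1}^{n}(1-q^i)=\prod_{n\ge1}(1-q^n)^{-1}$ applied with $q=t^2$. This matches the Poincar\'e series of the free polynomial algebra $\QQ[\lambda_1,\lambda_3,\dots,[\beta_1],[\beta_2],\dots]$, so together with the surjection from the previous step the classes $\lambda_{2i+1}$ and $[\beta_j]$ are forced to be algebraically independent. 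As an alternative one can first invoke the identification $\sigma_i\oplus\sigma_j=\sigma_{i+j}$, which extends the product of ppav to a product $\Std[g_1]\times\Std[g_2]\to\Std[g_1+g_2]$; by the Hopf algebra argument discussed in the introduction, together with vanishing in odd degrees, the stable cohomology is then a priori a polynomial algebra, and one only needs to identify its generators. I expect the main obstacle to be the geometric self-intersection computation for $[\beta_k]|_{\stratum g{\sigma_j}}$, which requires careful bookkeeping of the toric boundary divisors of $\calT_g(\sigma_j)$ together with the $G(\sigma_j)$-action that descends to the quotient.
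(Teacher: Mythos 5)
Your proposal is correct, and it reaches the theorem by a route that only partially overlaps with the paper's. Both arguments share the same skeleton: the stratification of $\Std$ by the standard-cone strata, Lemma~\ref{lm:standard} for the cohomology of each stratum, and the observation that odd-degree vanishing in the stable range forces the Gysin long exact sequences to split, giving both stabilization and the direct sum decomposition $\coh[k]{\Std}\cong\bigoplus_i\coh[k-2i]{\betazero i\cap\Std}(-i)$. The divergence is in how the ring structure is pinned down. The paper first invokes the H-space structure on the inductive limit $\Std[\infty]$ coming from the products $\Std[g_1]\times\Std[g_2]\to\Std[g_1+g_2]$ and applies Hopf's theorem, so that freeness (hence, given even-degree concentration, polynomiality) is known \emph{a priori}; identifying the generators then reduces to the rank count from the splitting together with the remark that each $[\beta_i]$ is nonzero and is not a polynomial in the lower $[\beta_j]$ and the $\lambda$-classes, which is checked by restricting to the open subset $\Std\setminus\beta_i$. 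You instead prove generation directly: you compute $[\beta_k]|_{\stratum g{\sigma_j}}$ as $(-2)^k e_k(T_1,\dots,T_j)$ up to corrections involving $\lambda_1$ and lower elementary symmetric functions, deduce surjectivity of the restriction $\coh{V_j}\to\coh{\stratum g{\sigma_j}}$, and run the projection-formula induction; freeness then follows from matching Poincar\'e series via Euler's identity $\sum_{n\ge0}q^n/\prod_{i=1}^n(1-q^i)=\prod_{n\ge1}(1-q^n)^{-1}$, which is exactly the combinatorial identity implicit in the paper's rank comparison. Your route costs you the toric normal-bundle bookkeeping you flag as the main obstacle --- it is doable, since Proposition~\ref{p:identification} supplies the Euler classes of the relevant $\CC^*$-directions and all correction terms stay in the subalgebra generated by $\lambda_1$ and lower-degree classes, so your hedged leading-term formula suffices --- but it buys an explicit geometric explanation of why the restriction maps to the deep strata are surjective, which the Hopf-algebra argument never exhibits. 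Your closing alternative via the H-space structure is precisely the paper's proof.
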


\begin{proof}
As remarked in the introduction, the stable cohomology of $\Std$ coincides with the cohomology of the inductive limit $\Std[\infty]$ of the sequence $\Std[g] \rightarrow \Std[g+1]$ defined by taking products with a fixed element of $\ab[1]$.  Let us observe that there is a well-defined product $\Std[g_1]\times\Std[g_2]\rightarrow\Std[g_1+g_2]$ for all $g_1,g_2\geq 0$. These products define a structure of H-space on $\Std[\infty]$, so in particular its cohomology is a commutative and associative graded Hopf algebra over $\QQ$. Hence, by Hopf's theorem, the stable cohomology of $\Std$ is a free graded-commutative algebra.
However, in the case of $\Std$ we know that the stable cohomology is concentrated in even degree, so that the stable cohomology is a polynomial algebra. At this point, it only remains to identify the generators.

Let us recall from Lemma~\ref{lm:standard} that the stable cohomology of the stratum $\betazero i\cap\Std$ is isomorphic to the polynomial algebra $\QQ[\eta_j,\lambda_{2k+1}|\;1\leq j\leq i, k\geq 0]$ generated by the odd $\lambda$-classes and by $i$ other classes $\eta_j\in\coh[2j]{\betazero i\cap\Std}$.
By the Gysin exact sequence associated with the stratification $\{\betazero i\cap\Std\}$ of $\Std$ we have
\begin{equation}\label{eq:stdsplit}
\coh[k]{\Std} \cong \bigoplus_{i\geq 0}\coh[k-2i]{\betazero i\cap\Std}(-i)
\end{equation}
in the stable range $k<g$.

Combining this with Lemma~\ref{lm:standard} we obtain that the rank of the stable cohomology in degree $k$ concides with the rank of the polynomial algebra $\QQ[\eta_j,\lambda_{2k+1}|\;j,k\geq 0]$ with $\deg\eta_j=2j$, $\deg\lambda_{2k+1}=4k+2$. Therefore, to prove the claim it suffices to notice that for all $i\geq 1$, the fundamental class of $[\beta_i]$ is not a product of classes $[\beta_j]$ with $j<i$ and $\lambda$-classes. This is indeed the case, as $[\beta_j]\in \coh[2j]{\Std}$ vanishes under the pull-back of the open inclusion $\Std\setminus \beta_i\hookrightarrow\Std$. Note that $[\beta_i]\neq 0$ follows from \eqref{eq:stdsplit} and the degeneration at $E_1$ of the Gysin exact sequence associated with the stratification of $\Std$ by boundary strata.
\end{proof}

\section{Automorphisms and the stable cohomology of the next stratum}\label{sec:next}
To further demonstrate that our method can give explicit results, in this section we will compute the stable cohomology of the  ``second partial'' compactification of $\ab$
obtained by adding to $\ab'$ the locus of semiabelic varieties of torus rank 2 (which now come in two flavors, depending on whether the toric part is $\PP^1\times\PP^1$ or two copies of $\PP^2$, so that we have two strata to deal with).
Note that this part is still the same for the perfect cone, matroidal, second Voronoi, and central cone toroidal compactifications.

More precisely, the perfect cone decomposition of $\Sym^2_{\geq 0}\RR^g$ contains exactly two $\GL(g,\ZZ)$ orbits of cones whose general element is a form of rank $2$, namely the orbits of the cones
$$\sigma_{1+1} := \langle x_1^2,x_2^2\rangle\quad\text{ and }\quad
\sigma_{K_3} := \langle x_1^2,x_2^2,(x_1-x_2)^2\rangle.
$$
This implies that the locus within $\Perf$ of semiabelic varieties of torus rank $2$ is the union of an open stratum $\stratum{}{\sigma_{1+1}}$, where the normalization of the corresponding semiabelic variety is an irreducible $\PP^1\times\PP^1$ bundle, and a closed stratum $\Delta:=\stratum{}{\sigma_{K_3}}$. In the following, we will determine the stable cohomology of these strata and of their union using Theorem \ref{thm:stablestrata} and the Gysin exact sequence.
Both these strata are fibrations over $\ua[g-2]\times_{\ab[g-2]}\ua[g-2]$.
By Theorem \ref{thm:Xgn} the stable cohomology $H^k(\ua[g-2]^{\times2})$ for $k<g-2$ is generated by the classes $T_1,T_2$ of the two pullbacks of the theta divisor, and the class $P:=P_{12}$ of the universal Poincar\'e divisor, all trivialized along the zero section.

For the open stratum $\stratum{}{\sigma_{1+1}}$ we know from  \cite[p. 356]{mumforddimag}, see also
\cite[Section 5]{huto2},
that it is the quotient by automorphisms of the total space of the universal Poincar\'e line bundle $\calP\to \ua[g-2]^{\times2}$, with its zero section removed (where the Poincar\'e bundle is trivialized along the zero section $\ab[g-2]\to \ua[g-2]^{\times2}$).
This description indeed agrees with that given in Section \ref{subsec:standard} for the $i$-dimensional standard cone, in the case $i=2$. Lemma \ref{lm:standard} gives us the following result:
\begin{lm}\label{lm:1+1}
The cohomology  of ${\stratum{}{\sigma_{1+1}}}$ stabilizes in degree $k < g-3$.
More precisely, in this range
$\coh[k]{\stratum{}{\sigma_{1+1}}}$, as an algebra over the stable cohomology of $\ab[g-2]$, is isomorphic to the polynomial algebra $\QQ[T_1+T_2,T_1T_2]$ on two generators, of degrees $2$ and $4$, respectively.
\end{lm}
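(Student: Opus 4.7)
The plan is to observe that $\sigma_{1+1}=\langle x_1^2,x_2^2\rangle$ is literally the $2$-dimensional standard cone discussed in Section~\ref{subsec:standard}, so that the lemma is an immediate specialization of Lemma~\ref{lm:standard} to the case $i=2$. Thus the main content is not a new argument but a translation of the general statement into the concrete generators $T_1+T_2$ and $T_1T_2$; nothing beyond bookkeeping is required.

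In more detail, the first step is to recall from Section~\ref{subsec:standard} that for the $i$-dimensional standard cone the torus bundle $\calT(\sigma)$ is obtained from $\ua[g-i]^{\times i}$ by removing the zero sections of the pairwise Poincar\'e bundles $P_{jk}$ (with $j<k$); for $i=2$ this is exactly the universal Poincar\'e bundle $\calP\rightarrow\ua[g-2]^{\times 2}$ minus its zero section, matching Mumford's description recalled just above the lemma. The stabilizer $G(\sigma_{1+1})\subset\GL(2,\ZZ)$ is generated by sign changes and by the swap $x_1\leftrightarrow x_2$, and on $\Span(\sigma_{1+1})\cap\Sym^2(\QQ^2)=\QQ\langle x_1^2,x_2^2\rangle$ it factors through the transposition action of $S_2$.

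Applying Lemma~\ref{lm:standard} with $i=2$ therefore gives stabilization of $H^k(\stratum{g}{\sigma_{1+1}})$ in the range $k<g-i-1=g-3$, and identifies the stable cohomology as an algebra over $H_\stab^\pu(\ab[g-2])$ with the $S_2$-invariant subalgebra of the algebra generated by $T_1,T_2\in H^2(\ua[g-2]^{\times 2})$, the Euler classes $P_{12}$ being killed when passing from $\ua[g-2]^{\times 2}$ to $\calT(\sigma_{1+1})$. This invariant subalgebra is freely generated by the elementary symmetric functions $\epsilon_1=T_1+T_2$ and $\epsilon_2=T_1T_2$ of degrees $2$ and $4$, which is exactly the assertion of the lemma.

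The only point deserving a moment's care, and hence the closest thing to a ``hard step,'' is to make sure the identification of $\calT(\sigma_{1+1})$ used here agrees with Mumford's description of the semiabelic stratum as the complement of the zero section in $\calP$; this is not new --- it is the content of Proposition~\ref{p:identification} in the special case $i=2$, $\sigma=\sigma_{1+1}$, together with the observation that $\sigma_{1+1}^\perp\cap M$ is generated by $\tau_{12}$ and that $e^{2\pi\sqrt{-1}\tau_{12}}$ is (up to the universal twist by the Hodge line bundle that disappears in the $\CC^*$-bundle) a local section of the Poincar\'e bundle.
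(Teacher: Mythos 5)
Your proposal is correct and follows exactly the paper's own route: the paper likewise notes that Mumford's description of $\stratum{}{\sigma_{1+1}}$ as the Poincar\'e bundle minus its zero section agrees with the Section~\ref{subsec:standard} description of the standard cone for $i=2$, and then derives the lemma as the specialization of Lemma~\ref{lm:standard}, with $\epsilon_1=T_1+T_2$ and $\epsilon_2=T_1T_2$ as the elementary symmetric generators. Your added care about matching the two descriptions via Proposition~\ref{p:identification} is exactly the point the paper also flags, so there is nothing to add.
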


Our approach to the locally closed stratum $\Delta = \stratum{}{\sigma_{K_3}}$ is analogous. In this case, the toroidal description yields that $\Delta$ is the quotient of $\calX^{\times 2}_{g-2}$ by the group $G(\sigma_{K_3})$ generated by the following three involutions:
\begin{align}
\label{invK3-1}
(x_1,x_2)&\leftrightarrow (-x_1,-x_2)\\
\label{invK3-2}
(x_1,x_2)&\leftrightarrow (x_2,x_1)\\
\label{invK3-3}
(x_1,x_2)&\leftrightarrow (x_1,x_1-x_2).
\end{align}

Note that the involution \eqref{invK3-1} acts trivially on  $\Sym^2(\RR^2)$, whereas  \eqref{invK3-2} can be viewed as the involution  $x_1^2\leftrightarrow x_2^2$ and \eqref{invK3-3} as $x_2^2\leftrightarrow (x_1-x_2)^2$. From this it follows that the action of $G(\sigma_{K_3})$ on $\Span(\sigma_{K_3})$ factors through the standard representation of the symmetric group $S_3$ on the generators of $\sigma_{K_3}$. Let us recall from Theorem \ref{thm:stablestrata} that the stable cohomology of $\stratum{}{\sigma_{K_3}}$ is the $G(\sigma_{K_3})$-invariant part of the symmetric algebra on the generators of $\sigma_{K_3}$, tensored with $H^\pu_\stab(\ab[g-2])$. If we denote by $(\alpha_1,\alpha_2,\alpha_3)=(x_1^2,x_2^2,(x_1-x_2)^2)$ the $\ZZ$-basis given by the generators of $\sigma_{K_3}$ and by
$$(\gamma_1,\gamma_2,\gamma_3)=(\tau_{11}+\tau_{12},\tau_{12}+\tau_{22},-\tau_{12})
$$
the dual basis, we have
$$
H^\pu_\stab(\stratum{}{\sigma_{K_3}})
\cong
H^\pu_\stab(\ab[g-2])\otimes \left(\Sym^\pu(\QQ\gamma_1+\QQ\gamma_2+\QQ\gamma_3)\right)^{S_3},
$$
so that by the theory of symmetric functions the stable cohomology of $\stratum{}{\sigma_{K_3}}$ is freely generated by the elementary symmetric functions in the $\gamma_j$.
The geometric meaning of these generators can be made more explicit by using the correspondence between the exponentials of the coordinates $\tau_{jk}$ and the classes $T_j,P_{jk}\in H^2_\stab(\ua[g-2]^{\times 2})$ coming from Remark \ref{rem:classes} and Proposition \ref{p:identification}. This yields the following description of the three generators $\xi,\eta,\zeta$:
\begin{align*}
-\gamma_1-\gamma_2-\gamma_3=-\tau_{11}-\tau_{12}-\tau_{22} &\mapsto&\xi&=2(T_1+T_2)+P,\\
\gamma_1\gamma_2+\gamma_2\gamma_3+\gamma_3\gamma_1=\tau_{11}\tau_{22}-\tau_{12}^2
&\mapsto&\eta&=4T_1T_2-P^2,\\
-\gamma_1\gamma_2\gamma_3=(\tau_{11}+\tau_{12})(\tau_{12}+\tau_{22})\tau_{12}
&\mapsto&\zeta&=P(2T_1+P)(2T_2+P).
\end{align*}

This proves the following result:
\begin{lm}\label{lm:K3}
The cohomology of $\Delta=\stratum{}{\sigma_{K_3}}$ stabilizes in degree $< g-3$, and in this range is generated over the stable cohomology of $\ab$ by the classes $\xi$, $\eta$ and $\zeta$ that have degrees $2$, $4$ and $6$, respectively.
\end{lm}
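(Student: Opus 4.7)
The plan is to invoke Theorem~\ref{thm:stablestrata} directly for the cone $\sigma_{K_3}$, which has rank $i=2$ and dimension $\ell=3$. This immediately supplies the stability range $k<g-i-1=g-3$ and identifies $H^\pu_\stab(\Delta)$ with the $G(\sigma_{K_3})$-invariant part of $H^\pu_\stab(\ab[g-2])\otimes \Sym^\pu(\Span(\sigma_{K_3})\cap\Sym^2(\QQ^2))$, where the three generators $\alpha_1=x_1^2$, $\alpha_2=x_2^2$, $\alpha_3=(x_1-x_2)^2$ of $\sigma_{K_3}$ sit in cohomological degree $2$.

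Next I would pin down the action of $G(\sigma_{K_3})$ on $\Span(\sigma_{K_3})$ using the three involutions \eqref{invK3-1}--\eqref{invK3-3}. The first, coming from $-\mathrm{id}\in\GL(2,\ZZ)$, acts trivially on $\Sym^2(\RR^2)$; the other two act on the ordered triple $(\alpha_1,\alpha_2,\alpha_3)$ as the transpositions $(1\,2)$ and $(2\,3)$, respectively. Hence the action of $G(\sigma_{K_3})$ factors through the full symmetric group $S_3$ in its standard permutation representation on the extremal rays of $\sigma_{K_3}$.

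With that in hand, the fundamental theorem of symmetric functions gives that the $S_3$-invariants of $\Sym^\pu(\QQ\alpha_1\oplus\QQ\alpha_2\oplus\QQ\alpha_3)$ form a polynomial ring freely generated by the three elementary symmetric polynomials $e_1,e_2,e_3$, of cohomological degrees $2,4,6$. Tensoring with the stable cohomology of $\ab[g-2]$ therefore exhibits $H^\pu_\stab(\Delta)$ as a free module over $H^\pu_\stab(\ab[g-2])$ on three generators of the stated degrees.

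The only remaining task, and the only place real computation intervenes, is to match these abstract symmetric-function generators with the three explicit classes $\xi$, $\eta$, $\zeta$. For this I would use Remark~\ref{rem:classes} together with Proposition~\ref{p:identification} to translate the dual basis $\gamma_1,\gamma_2,\gamma_3$ of $\Span(\sigma_{K_3})^\vee$ into expressions in the $\tau_{jk}$, and then substitute $\tau_{jj}\mapsto -2T_j$ and $\tau_{12}\mapsto -P$. Computing $e_1(\gamma)$, $e_2(\gamma)$, $e_3(\gamma)$ and expanding reproduces precisely the formulas $\xi=2(T_1+T_2)+P$, $\eta=4T_1T_2-P^2$, $\zeta=P(2T_1+P)(2T_2+P)$ displayed just before the lemma. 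The main (and minor) obstacle is keeping the signs straight in this dictionary between the torus-bundle coordinates and the divisorial classes pulled back from $\ua[g-2]^{\times 2}$; the fact that the kernel $\langle -\mathrm{id}\rangle$ of $G(\sigma_{K_3})\twoheadrightarrow S_3$ acts trivially on $\Span(\sigma_{K_3})$ means no additional invariants need to be imposed beyond the $S_3$-symmetry.
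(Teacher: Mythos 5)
Your proposal is correct and follows essentially the same route as the paper: invoke Theorem~\ref{thm:stablestrata} for the rank-$2$, dimension-$3$ cone $\sigma_{K_3}$, observe that $G(\sigma_{K_3})$ acts through the permutation representation of $S_3$ on the three extremal rays (with \eqref{invK3-1} acting trivially), take elementary symmetric functions as free generators of the invariants, and translate the dual basis $(\gamma_1,\gamma_2,\gamma_3)=(\tau_{11}+\tau_{12},\tau_{12}+\tau_{22},-\tau_{12})$ into $T_1,T_2,P$ via Remark~\ref{rem:classes} and Proposition~\ref{p:identification}. This matches the paper's argument step for step, including the stability range $k<g-3$ and the explicit formulas for $\xi,\eta,\zeta$.
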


Since each of the two substrata of $\beta_2^0$ are smooth, we can translate our results into cohomology with compact support, and using the Gysin spectral sequence we can thus compute
the cohomology $H_c^{\topd-k}(\beta_2^0,\QQ)$ with compact support in the stable range $k<g-3$ (where $\topd:=g(g+1)-4$ is the (real) dimension of $\beta_2^0$. Recall that the stratum $\betazero 2$ is smooth, as all rank $2$ cones are basic. In particular, Poincar\'e duality gives an isomorphism $\coh[k]{\betazero 2}\cong \cohc[\topd-k]{\betazero 2}$, so that we can state stability results for $\betazero 2$ directly in terms of cohomology.

For later use we notice in particular that
\begin{cor}\label{cor:stablecohbeta2}
For $g>11$, the Betti numbers of $\betazero 2$ in even degree are as follows:
$$
\begin{array}{|r|rrrrr|}
\hline
k&0&2&4&6&8\\
\hline
&&&&&
\\[-2.2ex]
\dim H^{k}(\betazero 2,\QQ)&1&3&6&11&19\\[0.3ex]
\hline
\end{array}
$$
Moreover, the stable cohomology vanishes in odd degree $k\leq 8$.
\end{cor}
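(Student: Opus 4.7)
The plan is to run the Gysin long exact sequence for the inclusion of the closed substratum $\Delta=\stratum{}{\sigma_{K_3}}$ (complex codimension $3$ in $\Perf$, hence codimension $1$ in $\betazero{2}$) into $\betazero{2}$, whose open complement is the open stratum $\stratum{}{\sigma_{1+1}}$. Since both $\sigma_{1+1}$ and $\sigma_{K_3}$ are basic cones, all three spaces are smooth, and the Thom--Gysin sequence reads
$$\cdots\to H^{k-2}(\Delta)(-1)\to H^k(\betazero{2})\to H^k(\stratum{}{\sigma_{1+1}})\to H^{k-1}(\Delta)(-1)\to\cdots.$$
By Lemmas~\ref{lm:1+1} and~\ref{lm:K3}, for $k<g-3$ the cohomology of both strata is concentrated in even degrees, so every connecting map dies and the sequence splits into short exact sequences in each even degree and gives $H^k(\betazero{2})=0$ for $k$ odd in the stable range. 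Hence $\dim H^k(\betazero{2})=\dim H^k(\stratum{}{\sigma_{1+1}})+\dim H^{k-2}(\Delta)$, and the bound $g>11$ is exactly what is needed so that both $k$ and $k-2$ stay in the stable range of the two lemmas when $k=8$.

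Next, I would read off the stable Poincar\'e series of the two strata from Theorem~\ref{thm:stablestrata} together with Lemmas~\ref{lm:1+1} and~\ref{lm:K3}, which exhibit both cohomology rings as polynomial algebras over the stable cohomology of $\ab[g-2]$. Writing $a(t)=\prod_{m\ge 0}(1-t^{4m+2})^{-1}$ for Borel's Poincar\'e series of $\ab[g-2]$, the free generators $T_1+T_2$ and $T_1T_2$ (in degrees $2$ and $4$) give
$$P_{\stratum{}{\sigma_{1+1}}}(t)=\frac{a(t)}{(1-t^2)(1-t^4)},$$
while the free generators $\xi,\eta,\zeta$ (in degrees $2,4,6$) give
$$P_\Delta(t)=\frac{a(t)}{(1-t^2)(1-t^4)(1-t^6)}.$$

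The final step is purely formal: expanding modulo $t^9$ one finds $a(t)=1+t^2+t^4+2t^6+2t^8$, whence $P_{\stratum{}{\sigma_{1+1}}}(t)=1+2t^2+4t^4+7t^6+11t^8$ and $P_\Delta(t)=1+2t^2+4t^4+8t^6+13t^8$, so that
$$P_{\betazero{2}}(t)=P_{\stratum{}{\sigma_{1+1}}}(t)+t^2\,P_\Delta(t)=1+3t^2+6t^4+11t^6+19t^8\pmod{t^9},$$
reproducing the table; odd-degree vanishing has already been recorded.

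The main obstacle is essentially nonexistent given the stabilization results already in place: everything reduces to range-bookkeeping and an elementary power-series manipulation. The one point that requires a moment's care is the shifted term $H^{k-2}(\Delta)(-1)$, which must still lie in the stable range of Lemma~\ref{lm:K3}; this is what forces the slightly stronger hypothesis $g>11$ rather than the naive $g>9$ coming from $\betazero{2}$ alone.
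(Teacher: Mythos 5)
Your proposal is correct and takes essentially the same route as the paper: the paper also glues the two rank-$2$ strata via the Gysin sequence (phrased there in compactly supported cohomology followed by Poincar\'e duality on the rationally smooth $\betazero{2}$, which is equivalent to your direct Thom--Gysin sequence in cohomology), uses the evenness of the stable cohomology from Lemmas~\ref{lm:1+1} and~\ref{lm:K3} to split the sequence, and reads off $\dim H^k(\betazero{2})=\dim H^k(\stratum{}{\sigma_{1+1}})+\dim H^{k-2}(\Delta)$, and your Poincar\'e-series bookkeeping reproduces the stated table. One small slip in your closing remark: the hypothesis $g>11$ is forced by the \emph{unshifted} term $H^8(\stratum{}{\sigma_{1+1}})$, which needs $8<g-3$, whereas the shifted term $H^6(\Delta)$ only needs $g>9$.
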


\section{Further computations: stable cohomology of $\Perf$ in degree up to 12}
\label{sec:numbers}
In this section we outline the technical difficulties encountered in extending the explicit computations of stable cohomology to higher degree, and list the results of this computation for the next couple of cases. As a result, we compute $H^{\topd-k}(\Perf,\QQ)$
for $k\le 12$, proving Theorem \ref{thm:Perfnumbers} (and then from the computations also easily deduce $H^{\topd-k}_c(\Matr,\QQ)$, proving Theorem \ref{thm:Matrnumbers}).  To do this, for each of the (many) cones, we will list the rank $1$ forms generating  it (as in \cite[Chapter 4]{vallentinthesis})
and the automorphism group preserving the cone (for most cases these have been computed by the second and third authors in \cite{huto} and \cite{huto2}, we provide the couple extra computations necessary). We then describe the action of the automorphism groups on the cohomology of the torus fiber.

We also recall that from Lemma \ref{l:cohpsia} and Proposition \ref{prop:stablebeta} and their proofs we know that the cohomology of each stratum is purely algebraic, all odd cohomology vanishes, that $E^{p,q}$ vanishes for $p$ odd, and that the Leray spectral sequence for the map to $\ab[g-k]$ degenerates at $E_2$. Thus our job amounts to computing the invariant part of the cohomology of each toric fiber, and then following the method of Lemma \ref{l:cohpsia} and Proposition \ref{prop:stablebeta}.

\subsection{Strata of codimension 3}
There is only one stratum of $\Perf$ of (complex) codimension 3 that we have not considered yet; it is the standard
degeneration of torus rank 3, given by the cone
$$\sigma_{1+1+1}=\langle x_1^2,x_2^2,x_3^2\rangle.$$

In this case, we can apply Lemma \ref{lm:standard} for rank $i=3$, which gives us that the stable cohomology of $\stratum{}{\sigma_{1+1+1}}$ is freely generated by the elementary symmetric polynomials in the $T$-classes
$$T_1+T_2+T_3, \ T_1T_2+T_2T_3+T_3T_1, \ T_1T_2T_3$$
and the odd $\lambda$-classes.
Recall that to compute $H^{\ge g(g+1)-12}(\Perf)=H^{\le 12}(\Perf)$ (see
Proposition \ref{prop:cohossmooth}),
we will only need $H^{\le 6}$ of this stratum.
The dimensions of the stable cohomology are thus given by
$$
\begin{array}{l|llll}
k&
0&2&4&6\\\hline
\dim
H_{\operatorname{stable}}^{k}(\stratum{}{\sigma_{1+1+1}},\QQ)&
1&2&4&8
\end{array}$$

\subsection{Strata of codimension 4}
We have three strata of codimension 4, of which one (the standard cone) has torus rank 4, and two others correspond to torus rank 3 degenerations, i.e.~define strata in $\beta_3^0$ (we refer to \cite{grhu2} for the detailed description of all strata of codimension up to 5, and of course to \cite{huto2} for more details). The standard torus rank 4 cone is
$$\sigma_{1+1+1+1}=\left<x_1^2,x_2^2,x_3^2,x_4^2\right>.$$
As above, we can apply Lemma~\ref{lm:standard} to $\stratum{}{\sigma_{1+1+1+1}}$, which yields the following values for the dimension of the stable cohomology in degree $k\leq 4:$
$$
\begin{array}{l|lll}
k&
0&2&4\\\hline
\dim
H_{\operatorname{stable}}^{k}(\stratum{}{\sigma_{1+1+1+1}},\QQ)&
1&2&4
\end{array}$$

\smallskip
The other two cones of codimension 4 have torus rank 3. One is
$$\sigma_{K_3+1}=\left<x_1^2,x_2^2,(x_1-x_2)^2,x_3^2\right>.$$

In this case  $\calT(\sigma_{K_3+1})$ is a torus bundle of rank $2$, with parameters $s_{1,3}^{-1},s_{2,3}^{-1}$. Therefore $\calT(\sigma_{K_3+1})$ is isomorphic to a product of the Poincar\'e bundles $(P_{1,3}\otimes L^{1/2})^0$ and $(P_{2,3}\otimes L^{1/2})^0$ with the $0$-section removed.
The reduced automorphism group of $\sigma_{K_3+1}$, i.e.~the automorphism group divided by  $\pm {\bf 1}$,
was computed in \cite[Lemma 6]{huto}, and is equal to $S_3\times (\ZZ/2\ZZ)$. Its action on $\Span(\sigma)$ factors through the action of $S_3$ permuting the first three generators of $\sigma$ and fixing the last one.
Then Theorem \ref{thm:stablestrata} implies that the stable cohomology of $\stratum{}{\sigma_{K_3+1}}$ is isomorphic to an algebra
$H^\pu_\stab(\ab)\otimes \QQ[f_2,g_2,g_4,g_6]$
where the subscript identifies the degree of the free generators. The generator $f_2$ can be identified with $x_3^2$, whereas $g_{2i}$ corresponds to the degree $i$ elementary polynomial in $x_1^2,x_2^2,(x_1-x_2)^2$.

Using the same approach and notation as in Lemma \ref{lm:K3}, this yields the isomorphism
$$
H^\pu_\stab(\stratum{}{\sigma_{K_3+1}}) = \QQ[T_{3},\xi,\eta,\zeta,\lambda_{2m+1}|\;m\in\ZZ]\subset H^\pu_\stab(\ua^{\times 3})
$$
for the classes $\xi=2(T_1+T_2)+P$, $\eta=4T_1T_2-P^2$, $\zeta = P(2T_1+P)(2T_2+P)$.
This yields the following formula:
$$
\begin{array}{l|ccc}
k&0&2&4\\\hline
\dim H_{\operatorname{stable}}^{k}(\stratum{}{\sigma_{K_3+1}},\QQ)
&1&3&7
\end{array}
$$

\smallskip
Finally, we have the last codimension 4 cone given by
$$\sigma_{C_4}=\left<x_1^2,x_2^2,(x_1-x_3)^2,(x_2-x_3)^2\right>.$$

This cone was studied in \cite[Section 5.4]{huto}: a natural choice of parameters for $\calT(\sigma_{C_4})$ is given by $s_{12}^{-1},s_{13}s_{23}s_{33}$;
the automorphism group of $\sigma_{C_4}$ is $S_4$, and it is generated by the three involutions sending the point $(x_1,x_2,x_3)\in \RR^3$ to
$$(x_2,x_1,x_1+x_2-x_3);\quad (x_1-x_3,-x_2,-x_3);\quad (x_3-x_2,-x_2,x_1-x_2)$$
respectively.
As $S_4$ permutes the generators of $\sigma_{C_4}$, the stable cohomology of $\stratum{}{\sigma_{C_4}}$ is freely generated over the stable cohomology of $\ab$ by four classes of degree $2,4,6,8$ respectively, corresponding to the elementary symmetric functions in the generators of $\sigma_{C_4}$. To identify them as elements of the stable cohomology of $\ua^{\times3}$, we need to extend the generators of $\sigma_{C_4}$ to a basis of $\Sym^2(\RR^3)$ in such a way that the span of the two additional generators $f,g$ is a subrepresentation of $S_4$, e.g.~by setting
\begin{align*}
  f &= -x_1^2+6x_1x_2-x_2^2-2x_1x_3-2x_2x_3+2x_3^2,\\
g &= 2x_1^2 +2x_2^2 -2x_1x_3 -2x_2x_3 -x_3^2.
\end{align*}
Then dualizing gives the following description of the dual elements $\gamma_1,\dots,\gamma_4$ (multiplied by $3$ by convenience) of the generators $\alpha_1,\dots,\alpha_4$ of $\sigma_{C_4}$:
\begin{align*}
\gamma_1&=3\tau_{11}+\tau_{33}+2\tau_{12}+4\tau_{13}+\tau_{23}\mspace{-9mu}
&\mapsto&-6T_1-2T_3-2P_{12}-4P_{13}-P_{23}\\
\gamma_2&=3\tau_{22}+\tau_{33}+2\tau_{12}+\tau_{13}+4\tau_{23}\mspace{-9mu}
&\mapsto&-6T_2-2T_3-2P_{12}-P_{13}-4P_{23}\\
\gamma_3&=\tau_{33}-\tau_{12}-2\tau_{13}+\tau_{23}
&\mapsto&-T_{3}+P_{12}+2P_{13}-P_{23}\\
\gamma_4&=\tau_{33}-\tau_{12}+\tau_{13}-2\tau_{23}
&\mapsto&-T_{3}+P_{12}-P_{13}+2P_{23}\\
\end{align*}

From this it follows that the stable cohomology of $\stratum{}{\sigma_{C_4}}$ is generated by the elementary symmetric functions in the $\gamma_i$. In particular, the degree $2$ generator is
$$\xi'=3T_1+3T_2+4T_3+2P_{23}+2P_{13}+P_{12},\ \text{(degree $2$)}$$
and the degree $4$ generator is
\begin{multline*}
\eta'=-P_{12}^2-P_{12}P_{13}-P_{13}^2-P_{12}P_{23}-P_{23}^2
\\
+12T_1T_2+12T_1T_3+12T_2T_3+6P_{13}T_2+6T_1P_{23}
\\
+4P_{13}P_{23}+4P_{12}T_3+8P_{13}T_3+8P_{23}T_3+8T_3^2.
\end{multline*}

\subsection{Strata of codimension 5}
For the codimension 5 strata the full computation of automorphism groups and of invariant classes becomes more elaborate.
Note, however, that for our purposes we are only interested in the cohomology in degrees up to 2.
Since each of these strata $\stratum{}\sigma$ is connected, the $H^0$ is always one-dimensional, and generated by the Poincar\'e dual of the fundamental class.
By  Theorem \ref{thm:stablestrata}, the $H^1$ vanishes and the $H^2$ is generated by $\lambda_1$ and by classes coming from the $G(\sigma)$-invariant subspace of $\Span(\sigma)$.

The first stratum of codimension 5 corresponds to semiabelic varieties of torus rank 3, and
was also treated in \cite{huto} and in \cite{huto2}, where it is denoted simply by $\sigma^{(5)}$. It is given by
$$\sigma_{K_4-1}=\left<x_1^2,x_2^2,x_3^2,(x_1-x_3)^2,(x_2-x_3)^2\right>,$$
and the full automorphism group was computed in \cite[\S6.5]{huto2}, and it coincides with the subgroup of the automorphism group of $C_4$ fixing $x_3^2$. From this it follows that the $G(\sigma_{K_4-1})$-invariant part of $\Span(\sigma_{K_4-1})$ is two-dimensional, generated by $x_3^2$ and by the sum of the other generators. Dually, this can be viewed inside the stable cohomology of $\ua^{\times3}$  as the span of the two invariants $i_1=T_1+T_2$ and $i_2=4T_3+P_{12}+2P_{13}+2P_{23}$  computed in \cite{huto2}.

Next, there are three strata in torus rank $4$, namely those corresponding to the cones
$$\sigma_{K_3+1+1}=\left<x_1^2,x_2^2,(x_1-x_2)^2,x_3^2,x_4^2\right>,$$
$$\sigma_{C_4+1}=\left<x_1^2,x_2^2,(x_1-x_3)^2,(x_2-x_3)^2,x_4^2\right>$$
and
$$\sigma_{C_5}=\left<x_1^2, x_2^2, (x_1-x_4)^2,(x_2-x_3)^2,(x_3-x_4)^2\right>.$$

In the case of $\sigma_{K_3+1+1}$ the automorphism acts on $\Span(\sigma_{K_3+1+1})$ as the product $S_3\times S_2$, where the first factor permutes the first three generators of $\sigma_{K_3+1+1}$ (as in the case of $\sigma_{K_3}$) and the second factor interchanges the last two generators. Therefore, the invariant subspace of $\Span(\sigma_{K_3+1+1})$ is two-dimensional, generated by $x_1^2+x_2^2+(x_1-x_2)^2$ and $x_3^2+x_4^2$.

For $\sigma_{C_4+1}$ the automorphism group coincides with that of $\sigma_{C_4}$ and acts trivially on $x_4^2$. Therefore, the invariant subspace of $\Span(\sigma_{C_4+1})$ is again two-dimensional, generated by $x_1^2+x_2^2+(x_1-x_3)^2+(x_2-x_3)^2$ and $x_4^2$.

Finally, the automorphism group $G(\sigma_{C_5})$ acts on $\Span(\sigma_{C_5})$ by permuting the $5$ generators. Therefore the invariant part of $\Span(\sigma_{C_5})$ is one-dimensional.

\smallskip
We now encounter a new feature: indeed, as explained in \cite{grhusurvey}, correcting \cite{grhu2}, there
exist two cones in the perfect cone decomposition of codimension 5 and torus rank 5. The first one is the standard cone given by
$$\sigma_{1+1+1+1+1}=\left<x_1^2,x_2^2,x_3^2,x_4^2,x_5^2\right>,$$
for which Lemma \ref{lm:standard} implies that the invariant part of $\Span(\sigma_{1+1+1+1+1})$ is one-dimensional.

The other case corresponds to the {\it non-standard} $5$-dimensional  cone given by
$$\sigma_{NS}:=\left< x_1^2, \ldots x_4^2, (2x_5- x_1 - x_2 - x_3 - x_4)^2\right>.$$
Its reduced automorphism group is generated by the group $S_5$ permutating the five generators of $\sigma_{NS}$. Therefore, the invariant part of $\Span(\sigma_{NS})$ is generated by the sum $x_1^2+\dots+x_4^2+(2x_5- x_1 - x_2 - x_3 - x_4)^2$.

\subsection{Strata of codimension 6}
For the strata of (complex) codimension 6, note that by the Gysin spectral sequence their only cohomology that matters for the computation of $H^{\le 12}(\Perf)$ is the $H^0$. Since each such stratum is connected, its $H^0$ is one-dimensional, and we simply note that there are in total $13$ strata. These correspond to the non-degenerate
$6$-dimensional cones of which there are $1$, $4$, $5$ and $3$ in genus $3$, $4$, $5$ and $6$ respectively, see \cite{numberofperfectforms}.
Note that the $6$-dimensional cones in genus $3$ and $4$ are all matroidal. In genus $5$, four of them --- the cones associated with the graphical lattices $C_6$, $C_5+1$, $C_4+1+1$ and $C_3+1+1+1$ --- are matroidal; the remaining cone contains $\sigma_{NS}$ and is therefore non-matroidal.

From the definition of matroidal cones, it follows that the standard cone (up to the $\GL(g,\ZZ)$-action) is the only $g$-dimensional matroidal cone of rank $g$. Hence, of the three $6$-dimensional perfect cones of genus $6$ one is matroidal and the other two are not.

Finally, we are ready to compute the cohomology of $\Perf$ in degree up to 12.
\begin{proof}[Proof of Theorem \ref{thm:Perfnumbers}]
From the proof of Theorem \ref{thm:main} it follows that for $k<g$, the cohomology of $\Perf$ in degree $\geq\topd-k$  is the direct sum of the stable cohomology with compact support of the strata $\stratum{}\sigma$ of codimension $\leq\lceil g/2\rceil$.
This means that for $g\geq 13$ we can calculate the cohomology of $\Perf$ in degree larger than or equal to $\topd-12$ by collecting the stable Betti numbers calculated in the previous sections and adding them as shown in Table \ref{tab:addingup}. From this the claim follows.
\end{proof}

\begin{table}
\caption{Betti numbers of stable cohomology}

\begin{tabular}{c|ccccccc} \label{tab:addingup}
degree&0&2&4&6&8&10&12\\\hline
$\ab$&1&1&1&2&2&3&4\\
$\beta_1^0$&&1&2&3&5&7&10\\
$\beta_2^0$&&&1&3&6&11&18\\
$\stratum{}{\sigma_{1+1+1}}$&&&&1&2&4&8\\
codim. 4 strata&&&&&3&7&15\\
codim. 5 strata&&&&&&6&15\\
codim. 6 strata&&&&&&&13\\\hline
Tot.&1&2&4&9&18&38&83
\end{tabular}
\end{table}

The cohomology of $\Matr$ in low degree is computed analogously:
\begin{proof}[Proof of Theorem \ref{thm:Matrnumbers}]
To compute the stable cohomology of $\Matr$ we simply need to subtract from Table \ref{tab:addingup} the contribution of the non-matroidal cones and then use Poincar\'e duality to pass from cohomology with compact support in degree $\topd-k$ to cohomology in degree $k$.
The only changes occur in dimensions 10 and 12. In dimension 10 we lose one generator, corresponding to the fundamental class of the
non-standard torus rank 5 codimension 5 cone. In dimension 12, we lose one generator for each of the three non-matroidal cones of dimension 6,
and two  generators for the $H^2$ of the non-matroidal dimension 5 cone.
\end{proof}

\section{Algebraic generators for cohomology}\label{sec:alg}
Above we have computed the dimensions of the stable cohomology groups
$H^{\operatorname{top}-k}(\Perf, \QQ)
\cong H^k(\Perf[g,{\operatorname{smooth}}],\QQ)\cong H^k(\Perf[g,{\operatorname{simp}}],\QQ)$
for $k\le 12$. We will now identify geometrically generators for the cohomology groups for $k\le 8$ and for most of $H^{10}$, and then discuss the phenomena present for $H^{12}$.
To be more precise, we shall construct certain
geometric cycles on the open part $\Perf[g,{\operatorname{simp}}]$
of $\Perf$  where the cycle map $cl: A_{\QQ}^\pu(\Psimp)
\to H^\pu(\Psimp, \QQ)$
is well defined and a ring homomorphism, see \cite[Corollary 19.2]{fultonintersection}.  Naturally this approach also works for the open sets
$\Psmooth$ and $\Matr$ which are (proper) subsets of $\Psimp$.

We will use two methods for constructing cohomology classes, the {\em strata algebra} --- generated by the fundamental classes of the strata in $\Perf$ corresponding to various perfect cone cones --- and the {\em boundary algebra} --- generated by suitable polynomials in irreducible divisorial components of the boundary of the level cover $\Perf(2)$ --- both taken together with the algebra generated by the Hodge classes $\lambda_{2i+1}$.

More precisely, for the first construction, we consider the algebra generated by the fundamental classes of the closures of the strata $\beta(\sigma)$, where $\sigma$ is a simplicial cone --- we call this the strata algebra by analogy with the
subalgebra of the cohomology of the moduli space of curves generated by the fundamental classes of the strata of stable curves of fixed topological type. From now on when we speak about the {\em class of the stratum}, we mean the cohomology class of its closure.
In order to keep the notation manageable, {\em in this section} we will denote  the corresponding cohomology class also by $\sigma$.

The second construction is by going to a level cover $\Perf(2)$, where the boundary becomes a reducible divisor, with its irreducible components $D_m$ labeled by
vectors
 $m\in (\ZZ/2\ZZ)^{2g}\setminus\{0\}$. The boundary components in $\Perf(2)$ corresponding to a basic cone intersect generically transversally.
 By writing polynomials in the classes of $D_m$ invariant under the action of the deck group $\Sp(2g,\ZZ/2\ZZ)$ of the cover $\Perf(2)\to\Perf$
 we obtain classes in suitable open subsets of $\Perf(2)$ (such as the simplicial locus) which descend to $\Perf$.  To avoid unnecessary multiplicities in our notation we
 normalize the pushforward by dividing by the order of the deck group, as was also done in \cite[Section 4]{grhu1}.
This construction provides us with well defined cohomology classes on the simplicial locus $\Perf[g,{\operatorname{simp}}]$.
It was used in \cite{grhu1}, where especially in Sections 8 and 9 similar constructions were performed, and we freely use the notation and results from there. We recall that the intersection of two different
boundary divisors $D_{m_1}\cap D_{m_2}\subset\Perf(2)$ is non-empty if and only if $m_1$ and $m_2$ span an isotropic subspace, i.e.~if and only if the scalar product $m_1\cdot m_2=0\in \ZZ/2\ZZ$. We note that the orbit under $\Sp(2g,\ZZ/2\ZZ)$ of a $k$-tuple $m_1,\ldots,m_k\in(\ZZ/2\ZZ)^{2g}\setminus\{0\}$ such that each pair $m_i,m_j$ is isotropic consists of all $k$-tuples of vectors satisfying the same set of linear relations over $\ZZ/2\ZZ$ (in particular, if some $m_i$ are the same, then in the orbit some of the elements must also be the same). Thus the generators for the vector space of polynomials in $D_m$ invariant under the action of $\Sp(2g,\ZZ/2\ZZ)$ are given by sums of products of the boundary divisors of the form $\sum D_{m_1}\ldots D_{m_k}$ subject to a fixed set of linear relations of the form $m_{i_1}+\ldots+m_{i_\ell}=0$. We will thus proceed by enumerating all such polynomials in $D_m$ of degree up to 6 (calling such polynomial a {\em pure boundary} class),
  and multiplying them by suitable polynomials in the Chern classes $\lambda_{2i+1}$ of the Hodge bundle. To prove that one obtains the entire stable cohomology in a given degree one then has to compare these classes to the ones which we used to prove stability in Sections \ref{sec:stable} and \ref{sec:next} and to compute the explicit numbers in Theorem \ref{thm:Perfnumbers}.

We will see that in degree up to 8 the strata algebra and the boundary algebra are equal, and both are equal to the stable cohomology.
On the other hand in degree 10 neither of them generates the entire stable cohomology $H^{10}(\Perf[g,{\operatorname{smooth}}],\QQ)$, and it appears that they give different codimension 1 subspaces of it. In degree 12 it appears likely that the strata algebra, boundary algebra, $H^{12}(\Perf[g,{\operatorname{smooth}}],\QQ)$, and $H^{12}(\Matr,\QQ)$
are all different.

{\bf Case $k=0$}. Here we of course have one class, which is simply $1$.

{\bf Case $k=2$}.
We have already treated this in Corollary \ref{cor:firststab}.
Here we have one class $\lambda_1$, which already exists on $\ab$,
and one class $\beta_1$, which on the one hand is the closure of the stratum given by the unique rank $1$ cone $\sigma_1$ and on the other
hand  is nothing but the boundary $D$ and can in the spirit of the above
discussion be identified with the sum $\sum D_m$.
Thus we have identified both generators of the stable cohomology $H^2(\Perf)$.

{\bf Case $k=4$}.
In our previous discussion we saw that the stable cohomology in degree $4$ has rank $4$. The only degree 4 class which already lives on $\ab$ is $\lambda_1^2$.
The boundary is the closure of the stratum $\sigma_{1+1}$, which contributes the classes $\lambda_1\beta_1$ and $\beta_1^2$.
Here we note that $\beta_1^2$ equals the class given by $T$ on $\sigma_1$ (and the latter is nothing but the universal abelian variety in genus $g-1$).
Finally the class of the closure of
the stratum $\sigma_{K3}$ also lies in $H^4$. In terms of boundary components of $\Perf(2)$ the first of these classes is $\lambda_1(\sum D_m)$, whereas the last
is given by $\sum_{m\ne m'}D_mD_{m'}$. Finally, $\beta_1^2$ corresponds to  $(\sum D_m)^2= \sum D_m^2  + \sum_{m\ne m'}D_{m}D_{m'}$.

{\bf Case $k=6$}. Here we need to be a bit more methodical. By Theorem \ref{thm:Perfnumbers} the stable $H^6(\Perf[g,\operatorname{smooth}])$ has dimension $9$. There are five classes which are
products of classes of degree at most $4$ (which we have already identified) with $\lambda$-classes. These are the two classes from $\ab$, namely $\lambda_1^3$ and $\lambda_3$, then the degree $2$ classes supported on the boundary multiplied with $\lambda_1^2$, i.e.~$\lambda_1^2\beta_1$, and finally from the degree $4$ classes supported on the boundary we obtain  $\lambda_1\beta_1^2$ and $\lambda_1\beta_2$. So far we have thus constructed five classes that are not obtained as cubic expressions in $D_m$.

We will now enumerate cubic expressions in $D$; this has actually been studied in detail in \cite{grhu1}. However, to set up the more methodical search below, we review how this can be done. Indeed, first of all in a cubic expression some indices $m_i$ may coincide (equivalently, this is a linear relation $m_i+m_j=0$). The expressions where there are some coincidences are thus $\sum D_m^3$ and $\sum D_{m_1}^2 D_{m_2}$, where from now on we use the convention that each such sum is over all possible $m_1,\ldots m_k$ satisfying no additional relations in addition to the ones stated --- so in particular in the second sum $m_1$ and $m_2$ are assumed to be distinct.

If we have a cubic expression with no $m_i$ coinciding, there are actually two cases, corresponding to whether the sum of the three indices is zero or not (these are the so-called local and global, corresponding to whether the three divisors intersect within $\beta_2^0$ or $\beta_3^0$). We thus have the two expressions $\sum_{m_1+m_2+m_3=0} D_{m_1}D_{m_2}D_{m_3}$ and $\sum D_{m_1}D_{m_2}D_{m_3}$ (where recall in the second sum we enforce $m_1+m_2+m_3\ne 0$). Also, from now on, when writing such sums, we will implicitly divide by the suitable product of factorials so that each summand appears only once, that is both of these cubics should be divided by 6, while say $\sum D_{m_1}^2D_{m_2}^2$ would be divided by 4.

Thus we have a total of 4 classes that are cubics in $D_m$. Indeed these four classes, together with the five classes described above generate the stable cohomology in degree $6$.
To see this we note that the condition  $m_1+m_2+m_3=0$ means that the three boundary divisors $D_{m_i}$ intersect locally, i.e.~the generic point of this
intersection is contained in $\beta^0_2$ and this intersection is the closure of the stratum $\beta(\sigma_{K_3})$. On the other hand the condition $m_1+m_2+m_3\ne 0$ means
that the three divisors intersect ``globally'', i.e.~their intersection is contained in $\beta_3$.
In fact this intersection is irreducible and equals $\beta_3$, which in turn is the closure of the stratum
$\sigma_{1+1+1}$ .

For what follows it is useful to use a better formalism for describing homogeneous polynomials in the $D_m$.
To make the formulas readable, we write $\{m_1^{i_1}\ldots m_l^{i_l}\}$ for $\sum D_{m_1}^{i_l}\ldots D_{m_l}^{i_l}$, where we order the powers so that $i_1\ge i_2\ge\ldots\ge i_l$, and furthermore we order the indices so that if  $i_{a-1}>i_a=\ldots=i_b>i_{b+1}$, then $m_a>\ldots>m_b$. We further note the linear relations in parenthesis, so that eg.~$(123)$ means $m_1+m_2+m_3=0$. Thus for example we have $\{1\}=\sum D_m$; $\{12\}=\sum_{i<j} D_i D_j=\beta_2$, and $\{1^22\}=\sum_{i\ne j}D_i^2 D_j$.

In degree $3$ we thus have the four possibilities $\{1^3\}$, $\{1^22\}$, $\{123\}$, $\{123(123)\}$. They relate to the fundamental classes of strata of $\Perf$ as follows:
$\beta_1^3=\{1^3\}+3\{1^22\} +6\{ 123\} + 6\{123(123)\}$, $\beta_1\beta_2=\{1^22\} + 3\{123\} + 3\{123(123)\}$, $\sigma_{K_3}=\{123(123)\}$ and
$\sigma_{1+1+1}=\{123\}$. Hence the space of classes
spanned by the  four possible cubic polynomials in the $D_m$ equals the span of $\beta_1^3$,$\beta_1\beta_2$,$\sigma_{K_3}$, $\sigma_{1+1+1}$.
We note that $\beta_1^3$ is the class $T^2$ on $\beta(\sigma_1)$ and $\beta_1\beta_2$ is the class of $T_1+T_2$ on
$\sigma_{1+1}$, see the proof of Lemma \ref{lm:1+1}.
Thus it follows from
Section \ref{sec:numbers} that the classes obtained as polynomials in the $D_m$ together with the $\lambda$-classes generate the stable cohomology in degree $6$.

{\bf Case $k=8$}. From Theorem \ref{thm:Perfnumbers} we know that the rank of the stable cohomology in degree $8$ is $18$. Above we have described $9$ classes in degree $6$. Multiplying these with $\lambda_1$
and also taking $\lambda_3\beta_1$ we obtain $10$ independent classes. The remaining stable cohomology can be generated by classes which do not contain a factor
which is a $\lambda$-class and,
according to Sections \ref{sec:next} and \ref{sec:numbers}, is generated by the image of the classes
$T^3\sigma_1, (T_1+T_2)^2\sigma_{1+1},  (T_1T_2)\sigma_{1+1}, (T_1+T_2+T_3) \sigma_{1+1+1},
(2(T_1+T_2)-P) \sigma_{K_3}, \sigma_{1+1+1+1},  \sigma_{K_3+1},\sigma_{C_4} $ in $H^{\operatorname{top} - 8}(\Perf,\QQ)$.

We will now show how to obtain (the span of) these classes by the quartic polynomials in $D$.
For this we first have to enumerate these.
This situation was studied in detail in \cite[Proposition 8.4]{grhu1}. The possibilities are
$$
 \{1^4\},\{1^32\},\{1^22^2\},\{1^223(123)\},\{1^223\},
$$
$$
 \{1234(123)\}, \{1234(1234)\},\{1234\},
$$
so that altogether we get 8 classes. As discussed in \cite{grhu1}, their span is equal to the span of the classes $\beta_1^4     ,\beta_1^2\beta_2,\beta_2^2,\beta_1\beta_3,
\beta_1(\sigma_{K_3} + \sigma_{1+1+1}),\beta_4,\{1234\} + \{1234(123)\} + \{1234(1234)\}$, and $\{1234(1234)\}$.
The stratum $\beta_4$  is irreducible, and we have  $\beta_4=\sigma_{1+1+1+1}$, which corresponds
to the polynomial $\{1234\}$. From the definition of the cones $\sigma_{K_3+1}$ and $\sigma_{C_4}$ we find that these strata correspond to $\{1234(123)\}$ and $\{1234(1234)\}$.
Next $\beta_1\beta_3$ gives the class coming from $(T_1+T_2+T_3) \sigma_{1+1+1}$.
Since $\sigma_{1+1+1}=\beta_3$ we obtain, modulo $\beta_1\beta_3$, that $\beta_1(\sigma_{K_3} + \sigma_{1+1+1})$ gives the unique degree $2$
class on $\sigma_{K_3} $ which, by  the proof of Lemma \ref{lm:K3} is $(2(T_1+T_2) - P)\sigma_{K_3}$.
Modulo the classes already enumerated we then see
that $\beta_2^2$, which corresponds to $\{12\}^2$, gives  $(T_1T_2)\sigma_{1+1}$. Similarly $\beta_1^2\beta_2$, which corresponds to
$\{1\}^2\{12\}$, gives, again modulo classes already
enumerated, the class $(T_1+T_2)^2\sigma_{1+1}$. Finally $\beta_1^4$ gives $T^3\sigma_1$ plus classes from above. This shows that we obtain the entire stable cohomology in degree $8$ by using either the strata algebra or the polynomials in $D_m$.

{\bf Case $k=10$}.
Here we will see that neither the boundary algebra nor the strata algebra span all of $H^{10}(\Perf[g,{\operatorname{smooth}}],\QQ)$, while it could be that together they span it.

Indeed, we know from Theorem (\ref{thm:Perfnumbers}) that stable $H^{10}(\Perf[g,{\operatorname{smooth}}],\QQ)=\QQ^{38}$.
Above we have seen that all 19 stable classes of degree 8 lie in the strata algebra and in the boundary algebra. Multiplying each of these 18 classes by $\lambda_1$ gives a degree 10 class in the stable cohomology of $\Perf$. We also have the class $\lambda_5$ in the stable cohomology of $\ab$ (which also extends to $\Perf$). Furthermore, we can construct more classes as a product of $\lambda_3$ and a suitable boundary class. For this, we would need a polynomial in boundary strata of codimension 4, and there are of course two such classes, $\beta_1^2$ and $\beta_2$ (the same space is the linear span of $\stratum{}{\sigma_1}^2$ and $\stratum{}{\sigma_{1+1}}$). Thus altogether we have constructed $21=18+1+2$ degree 10 classes involving a $\lambda$-class.
We thus need to account for the remaining 17 classes in $H^{10}(\Perf[g,\operatorname{smooth}])$.

To understand pure boundary strata in $H^{10}$, we need to study the possible quintics in $D_m$: these are enumerated in the proof of \cite[Proposition 9.1]{grhu1}, and in our notation are as follows:
$$
\{1^5\},\{1^42\},\{1^32^2\},\{1^323\},\{1^323(123)\},\{1^22^23\},\{1^22^23(123)\},
$$
$$\{1^2234\},\{1^2234(1234)\},\{1^2234(123)\},\{1^2234(234)\},\{12345\},
$$
$$
\{12345(12345)\},\{12345(1234)\},\{12345(123)\},\{12345(123,145)\},
$$
which gives  a total of 16 quintic polynomials in $D_m$. Thus the dimension of pure boundary algebra is 16, and together with the 21 classes enumerated above these are insufficient to generate the stable $H^{10}(\Perf[g,{\operatorname{smooth}}],\QQ)$. Thus the boundary algebra is {\em smaller} than $H^{10}(\Perf[g,{\operatorname{smooth}}],\QQ)$.

Similarly, for the strata algebra in degree 10, as per the discussion in Section (\ref{sec:numbers}), we note that there are 6 boundary strata of complex
codimension 5. These can be related to polynomials in the $D_m$  as follows:
$$
 \sigma_{1+1+1+1+1}=\{12345\},\qquad  \sigma_{K_3+1+1}=\{12345(123)\}
$$
$$
  \sigma_{C_4+1}=\{12345(1234)\},\qquad  \sigma_{K_4-1}=\{12345(123,145)\}
$$
while we have
$$
  \sigma_{C_5}+\sigma_{NS}=\{12345(12345)\},
$$
where we recall that $\sigma_{NS}$ denotes the non-standard non-matroidal cone. The last identity follows since the 5-tuples given by the generators of
the cones  $\sigma_{C_5}$ and $\sigma_{NS}$ coincide mod $2$.
Thus all 5 quintics in $D_m$ that involve 5 different indices can be expressed in terms of boundary strata, but not vice versa.
We now investigate further degree 10 classes in the strata algebra. For polynomials that involve a boundary class of complex codimension 4, we have
$$\begin{aligned}
{\sigma_1} \sigma_{1+1+1+1}&= *\{1^2234\}+*\{12345\}+*\{12345(12345)\}\\
  {\sigma_1} \sigma_{K_3+1}&= *\{1^2234(123)\}+*\{1^2234(234)\}+*\{12345(123)\}\\
  & \quad+ *\{12345(123,145)\}\\
   {\sigma_1} \sigma_{C_4+1}&=*\{1^234(1234)\}+*\{12345(1234)\}
\end{aligned}
$$
where $*$ denotes the various combinatorial non-zero coefficients appearing. Using these expressions, together with the expressions for the quintics involving five different $D_m$, obtained above, we can express $\{1^2234\}$ and $\{1^234(1234)\}$ as linear combinations of polynomials in boundary strata. Note, however, that so far we are only able to express a suitable linear combination $*\{1^2234(123)\}+*\{1^2234(234)\}$ as a polynomial in boundary strata --- but not the two summands individually.

For the elements of the (pure, not involving the $\lambda$'s) strata algebra involving a cone of  complex codimension 3, we similarly have
$$
\begin{aligned}
 \sigma_1^2\sigma_{1+1+1}&=*\{1^323\}+*\{1^22^23\}+X\\
 \sigma_1^2 \sigma_{K3}&=*\{1^323(123)\}+*\{1^22^23(123)\}+X\\
 {\sigma_{1+1}}{\sigma_{1+1+1}}&=*\{1^22^23\}+X\\
 {\sigma_{1+1}}{\sigma_{K3}}&=*\{1^22^23(123)\}+X
\end{aligned}
$$
where $X$ in each case denotes various explicit linear combinations of quintics involving at least 4 different $D_m$. Thus from the above expressions, we can express each of the four quintic polynomials $\{1^323\}$, $\{1^323(123)\}$, $\{1^22^23\}$, $\{1^22^23(123)\}$, as a linear combination of monomials in boundary strata and quintics involving at least 4 different $D_m$, while we get no further information or relations that could allow us to distinguish $*\{1^2234(123)\}$ and $*\{1^2234(234)\}$ (or $\sigma_{NS}$ and $\sigma_{C_5}$).

It remains to enumerate elements of the strata algebra that only involve classes of codimension at most 2; that is to say, we now need to write monomials in $\sigma_1$ and
$\stratum{}{\sigma_{1+1}}$ only. Again, now denoting $X$ any linear combinations of quintics involving at least 3 different $D_m$, we get
$$
\begin{aligned}
 \sigma_1^5&=*\{1^5\}+*\{1^42\}+*\{1^32^2\}+X\\
 \sigma_1^3 {\sigma_{1+1}}&=*\{1^42\}+*\{1^32^2\}+X\\
 {\sigma_1} \sigma_{1+1}^2&=*\{1^32^2\}+X
\end{aligned}
$$
so that again these monomials can be expressed in terms of the polynomials in the strata algebra and the monomials we have studied previously. We thus obtain

\begin{summary} There exist 16 pure boundary classes (quintics in $D_m$) and 16 pure strata classes (monomials in the classes of the strata), such that:
\begin{itemize}
\item[(i)] each pure boundary class except $\{1^2234(123)\}$ and $\{1^2234(234)\}$ lies in the pure strata algebra; moreover, a suitable linear combination $*\{1^2234(123)\}+*\{1^2234(234)\}$ lies in the pure strata algebra
\item[(ii)] each pure strata class, except $\sigma_{C_5}$ and $\sigma_{NS}$ lies in the pure boundary algebra; moreover, $\sigma_{C_5}+\sigma_{NS}$ also lies in the pure boundary algebra (in fact, is equal simply to $\{12345(12345)\}$)
\end{itemize}
\end{summary}
We thus obtain
\begin{prop} \label{prop:notenough}
Neither the strata algebra nor the boundary algebra generate the cohomology rings of either the smooth or the simplicial locus of $\Perf$.
\end{prop}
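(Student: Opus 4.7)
\noindent\emph{Proof proposal.}
The plan is to establish the proposition by a direct dimension count in cohomological degree $10$, exploiting the explicit enumerations carried out in this section together with the total Betti number $\dim H^{10}(\Psmooth,\QQ)=38$ recorded in Theorem \ref{thm:Perfnumbers}.

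First I would bound the dimension of each algebra in degree $10$ from above by a spanning-set count. Every element of the boundary algebra (respectively, of the strata algebra) in degree $10$ may be written as a $\QQ$-linear combination of products $\lambda_{2j_1+1}^{a_1}\cdots\lambda_{2j_r+1}^{a_r}\cdot P$, where $P$ is a pure boundary polynomial (respectively, a pure monomial in fundamental classes of simplicial strata) and the total cohomological degree equals $10$. Stratifying by the cohomological degree $d$ of the $\lambda$-part, the size of such a spanning set is bounded above by $\sum_{d\text{ even}} p(d)\cdot q(10-d)$, where $p(d)$ denotes the dimension of the degree $d$ component of $\QQ[\lambda_1,\lambda_3,\ldots]$ and $q(d)$ denotes the number of pure monomials in the chosen algebra in degree $d$.

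Next I would read off the relevant values. Borel's Corollary \ref{Ag} gives $(p(0),p(2),p(4),p(6),p(8),p(10))=(1,1,1,2,2,3)$. The enumerations carried out earlier in this section furnish the pure-part dimensions $(q(0),q(2),q(4),q(6),q(8),q(10))=(1,1,2,4,8,16)$, and these values happen to coincide for the boundary algebra and for the strata algebra. Assembling the sum
\[
1\cdot 16+1\cdot 8+1\cdot 4+2\cdot 2+2\cdot 1+3\cdot 1=37,
\]
I conclude that the image of either algebra inside the $38$-dimensional vector space $H^{10}(\Psmooth,\QQ)$ has dimension at most $37$, and hence neither algebra equals the full stable cohomology ring. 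By Proposition \ref{prop:cohossmooth}(ii), the identical conclusion holds on $\Psimp$.

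The argument being purely numerical, the only real obstacle is in ensuring that the enumerations of pure boundary and pure strata monomials are exhaustive; both of these have been completed in the preceding analysis, and the numerical coincidence $q(10)=16$ reflects the observation recorded in the summary above that although the two algebras disagree on which individual pure classes they capture---the strata algebra missing $\{1^2234(123)\}$ and $\{1^2234(234)\}$ separately, the boundary algebra missing $\sigma_{C_5}$ and $\sigma_{NS}$ separately---their pure parts in degree $10$ nevertheless have the same dimension. An analogous tabulation in cohomological degree $12$, comparing $\dim H^{12}=83$ with the corresponding product-sum over pure $\lambda$-monomials and pure boundary or strata monomials, is expected to sharpen the statement in that degree as well.
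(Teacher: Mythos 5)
Your proposal is correct and follows essentially the same route as the paper: a pure dimension count in degree $10$, comparing the at most $37$ products of $\lambda$-monomials with pure boundary (resp.\ pure strata) monomials against $\dim_\QQ H^{10}(\Psmooth,\QQ)=38$. Your convolution-style bookkeeping $\sum_d p(d)q(10-d)=37$ is just a reorganization of the paper's tally of $21$ classes involving a $\lambda$-factor plus $16$ pure quintics in the $D_m$ (resp.\ $16$ pure strata monomials), and both rest on the same exhaustive enumerations carried out earlier in the section.
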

We furthermore {\em conjecture} that in fact both the boundary algebra and the strata algebra in degree 10 have dimension 37, that together they span $H^{10}(\Perf[g,{\operatorname{smooth}}],\QQ)$, and moreover that the boundary algebra actually is equal to $H^{10}(\Matr)$.
As the stratum $\sigma_{NS}$ does not belong to $\Matr$, the dimension of the restriction of the strata algebra to $\Matr$ is only $36$ in degree $10$, so it is clear that the strata algebra cannot give all stable cohomology of $\Matr$.

{\bf Case $k=12$}.
Here we will see that $H^{12}(\Perf[g,{\operatorname{smooth}}],\QQ)$, $H^{12}(\Matr,\QQ)$, the boundary and the strata algebra {\em all seem to have different dimensions}.
We recall that by Theorem \ref{thm:Perfnumbers} and Theorem \ref{thm:Matrnumbers} we have
$H^{12}(\Perf[g,{\operatorname{smooth}}],\QQ)=\QQ^{83}$ and $H^{12}(\Matr,\QQ)=\QQ^{78}$.

For both the boundary and the strata algebra, we shall first enumerate those classes which
involve $\lambda$-factors.
Here we have 4 classes in the interior: $\lambda_1^6,\lambda_1^3\lambda_3,\lambda_1\lambda_5$, $3\cdot 1$ classes by multiplying the $k=10$ interior classes by the unique pure boundary class $\lbrace 1\rbrace$ (which is the same as the pure stratum class $\sigma_1$, of course),
$2\cdot 2$ classes by multiplying the $k=8$ interior classes by the two pure boundary/strata classes in degree 4, $2\cdot 4$ classes by multiplying the $k=6$ interior classes by the two pure boundary/strata classes in degree 6, $1\cdot 8$ classes by multiplying the $k=4$ interior class $\lambda_1^2$ by the pure boundary/strata classes in degree 8, and $1\cdot 16$ classes by multiplying $\lambda_1$ by the quintics in $D_m$ (which are different from the 16 pure strata classes, but the dimension is the same), for a total of $4+3+4+8+8+16=43$ classes.

Next we discuss the pure boundary classes, i.e.~sextic polynomials in  $D_m$. We have
$$\{1^6\},\{1^52\},\{1^42^2\},\{1^32^3\},\{1^423\},\{1^423(123)\},\{1^32^23\},\{1^32^23(123)\},$$
$$\{1^22^23^2\},\{1^22^23^2(123)\},\{1^3234\},\{1^3234(1234)\},\{1^3234(123)\},$$
$$\{1^3234(234)\},\{1^22^234\},\{1^22^234(1234)\},\{1^22^234(123)\},\{1^22^234(134)\},  $$
$$\{1^22345\},\{1^22345(12345)\},\{1^22345(1234)\},\{1^22345(2345)\},$$ $$\{1^22345(123)\},\{1^22345(234)\},\{1^22345(123,145)\},\{1^22345(123,245)\},  $$
$$\{123456\},\{123456(123456)\},\{123456(12345)\},\{123456(1234)\},$$
$$\{123456(1234,1256)\},\{123456(1234,156)\},\{123456(123)\},$$
$$\{123456(123,145)\},\{123456(123,145,246)\},\{123456(123,456)\}$$
for a total of 36 sextics, so that the total dimension of the boundary algebra in degree 12 is at most $43+36=79$ (it could be less as we have not ruled linear relations among the above, which, however, seem unlikely to exist).

We will now discuss the pure strata classes, and will take this opportunity to set up this approach more systematically. We first list the cones of the perfect classes of boundary strata, in each codimension:

\smallskip

\begin{tabular}{|r|c|r|}
\hline
Codim&Cones&\# of Cones\\
\hline
2&$\sigma_1$&1\\
4&$\sigma_{1+1}$&1\\
6&$\sigma_{K_3},\sigma_{1+1+1}$&2\\
8&$\sigma_{K_3+1},\sigma_{C_4},\sigma_{1+1+1+1}$&3\\
10&$\sigma_{K_4-1},\sigma_{K_3+1+1},\sigma_{C_4+1},\sigma_{C_5},\sigma_{1+1+1+1+1},\sigma_{NS}$&6\\
12&\dots&13\\
\hline
\end{tabular}

\smallskip
To compute the number of pure strata classes in degree $k$ is to compute the number of monomials in the classes of these cones, of appropriate degree. Thus we need to sum over all partitions $k=2n_1+\ldots+2n_i$ with the products of the numbers of cones in codimension $2n_i$, from the table above. We have of course implicitly used this throughout the computations above, but there we also were able to identify the individual monomials with the stable cohomology generators or with the boundary algebra. Here we only do the combinatorics; the result is given by the following table, where the results for degree up to 10 simply summarize the previous discussion, and the number of pure strata classes in degree 12 is what we wanted.

\smallskip

\begin{tabular}{|r|r|r|r|}
\hline
&&\\
$k$&partitions of $k$&\# of pure strata classes in $\deg k$\\
&&\\
\hline
2&1&1\\
4&2,11&$1+1\cdot 1=2$\\
6&3,21,111&$2+1\cdot 1+1\cdot 1\cdot 1=4$\\
8&4,31,22,211,1111&$3+2\cdot 1+1+1+1=8$\\
10&5,41,32,311,221,2111,11111&$6+3+2+2+1+1+1=16$\\
12&6,51,42,411,33,321,3111,&$13+6+3+3+2\cdot 2+2+2\ \ $\\
&222,2211,21111,111111&$+1+1+1+1=37$\\
\hline
\end{tabular}

\smallskip
\begin{summary}
We thus have
\begin{itemize}
\item[(i)] The dimension of the strata algebra in degree 12 is equal to at most 80, of which at most 37 are the pure strata classes.
\item[(ii)] The dimension of the boundary algebra in degree 12 is equal to at most 79, of which at most 36 are the pure boundary classes.
\end{itemize}
\end{summary}
It thus follows that neither the strata nor the boundary algebra in degree 12 generate all of $H^{12}(\Perf[g,{\operatorname{smooth}}],\QQ)$ --- which of course is not surprising given that this fails already in degree 10. We would like to close with the following:
\begin{conj}
There are no stable relations in the strata or boundary algebra. More precisely, the strata and boundary algebra are freely generated by the odd lambda classes and the strata, respectively boundary classes for $k<<g$.  
\end{conj}
\begin{qu}
Is it true that the strata and the boundary algebras together generate the stable cohomology of $\Perf[g,\operatorname{smooth}]$?
\end{qu}
\begin{qu}
What is (stably) the intersection of the strata and the boundary algebra?
\end{qu}
\begin{qu}
Is it true that the boundary algebra generates the stable cohomology of $\Matr$?
\end{qu}

We hope that we, or others, would be able to address some of these questions in the future.

\newcommand{\etalchar}[1]{$^{#1}$}

\end{document}